\documentclass[preprint,12pt]{elsarticle}

\usepackage{amssymb,amsmath,amsthm}
\usepackage{lineno,hyperref,color,bm}
\usepackage{booktabs}

\usepackage{graphicx}
\usepackage{subcaption}
\usepackage{caption}
\DeclareCaptionSubType{figure}

\usepackage[ruled]{algorithm2e}

\usepackage{geometry}
\usepackage{empheq}
\newtheorem{theorem}{Theorem}[section]
\newtheorem{lemma}[theorem]{Lemma}
\newtheorem{remark}[theorem]{Remark}
\newtheorem{corollary}[theorem]{Corollary}
\newtheorem{proposition}[theorem]{Proposition}
\newtheorem{definition}[theorem]{Definition}
\newtheorem{assumption}{Assumption}[section]  

\makeatletter
\def\ps@pprintTitle{%
	\let\@oddhead\@empty
	\let\@evenhead\@empty
	\let\@oddfoot\@empty
	\let\@evenfoot\@empty
}
\makeatother

\begin{document}

\begin{frontmatter}



\title{Sparse Recovery via $\ell_p^p/\ell_q^p$ Ratio Minimization: Theory and Algorithm}

%

\author[label1]{Lang Yu} 

\author[label1]{Nanjing Huang\corref{mycorrespondingauthor}} 

\cortext[mycorrespondingauthor]{Corresponding author}
\ead{nanjinghuang@hotmail.com}
\address[label1]{School of Mathematics, Sichuan University, 610064, Chengdu, China}

\begin{abstract}
The constrained $\ell_p^p/\ell_q^p$ ratio model is scale invariant and is therefore attractive for sparse signal recovery. However, its nonconvex, nonsmooth, and fractional structure makes a unified theoretical and algorithmic analysis challenging for $0<p\le1$ and $q>1$. This paper develops a unified framework for this general model, covering deterministic exact recovery, stable recovery for sparse and compressible signals, and convergence analysis of a fractional algorithm. We first establish two deterministic sufficient conditions for exact recovery: a local optimality criterion and a null-space condition ensuring uniform recovery. For the $\ell_1/\ell_q$ subfamily, this null-space condition is further converted into high-probability sample-complexity bounds for isotropic sub-Gaussian matrix. We then study noisy recovery. Under the $k$-sparsity assumption, we improve the RIP-based stable recovery theory by relaxing the required sufficient condition and deriving sharper reconstruction-error bounds. For compressible signals, we establish RIP--ROP based error estimates whose constants are independent of the ambient dimension, improving prior bounds with explicit dimension-dependent factors \cite{zhu2026stable}. An RIP-only variant is also derived. On the algorithmic side, we propose a prox-linear Dinkelbach framework that directly handles the fractional structure of the constrained problem and prove its convergence. Numerical experiments demonstrate that suitable choices of $(p,q)$ are effective for high-dynamic-range sparse signals and coherent sensing matrices.

\end{abstract}



\begin{keyword}
Compressed Sensing \sep Sparse Recovery \sep $\ell_p^p/\ell_q^p$ Ratio minimization \sep Exact and Stable Recovery \sep Restricted Isometry Property \sep Restricted Orthogonality Property


\end{keyword}

\end{frontmatter}



\section{Introduction}\label{sec:introduction}

Compressed sensing (CS) \cite{donoho2006compressed,candes2008introduction} aims to recover a sparse or compressible signal $\bm{x}\in\mathbb{R}^{n}$ from far fewer linear measurements than the ambient dimension. In the standard observation model $\bm{b} = \bm{A}\bm{x} + \bm{e}$, where $\bm{b}\in\mathbb{R}^{m}$ is the observation vector, $\bm{A}\in\mathbb{R}^{m\times n}$ with $m\ll n$ is the sensing matrix, and $\bm{e}\in\mathbb{R}^{m}$ is a bounded noise vector. The central task is to exploit the sparsity of $\bm{x}$ and reconstruct it from this underdetermined system. The most direct formulation is $\ell_0$-minimization, but it is NP-hard in general \cite{natarajan1995sparse}. A standard convex relaxation replaces the $\ell_0$ measure by the $\ell_1$ norm, leading to basis pursuit and its noisy variants \cite{donoho2006compressed,chen2001atomic}. Under conditions such as the null space property (NSP) and the restricted isometry property (RIP), $\ell_1$-minimization enjoys exact, stable, and robust recovery guarantees \cite{candes2008introduction,candes2006stable}. Nevertheless, the $\ell_1$ norm is only a convex surrogate for sparsity, and it may produce biased estimates or suboptimal recovery, especially for coherent sensing matrices or high-dynamic-range signals \cite{fannjiang2012coherence}.

To obtain sharper approximations of the $\ell_0$ measure, many nonconvex sparsity-promoting models have been investigated, including $\ell_p$-minimization with $0<p<1$ \cite{zhang2019optimal,zeng2014l}, transformed $\ell_1$ penalties \cite{zhang2018minimization}, the minimax concave penalty (MCP)~\cite{zhang2010nearly}, capped-$\ell_1$~\cite{zhang2010analysis}, the smoothly clipped absolute deviation (SCAD) penalty~\cite{fan2001variable}, and iterative reweighted schemes \cite{lai2013improved}. These models often promote sparsity more aggressively than convex $\ell_1$-minimization, but their nonconvex and nonsmooth structures make both recovery analysis and algorithmic convergence substantially more delicate.

Beyond separable penalties, non-separable models have also been proposed, including the difference-based penalties $\ell_1-\ell_2$ \cite{yin2014ratio,lou2018fast}, $\ell_1^2-\eta\ell_2^2$ \cite{li2026l12}, $\ell_1-\alpha \ell_2$ \cite{ge2021dantzig}, $\ell_1-\beta\ell_q$ \cite{huo2023minimization}, weighted $\ell_r-\alpha\ell_1$~\cite{zhou2023rip} with $r\in(0,1)$, and the more general models $\ell_p-\alpha\ell_q$ \cite{gu2024nonconvex} with $p\in(0,1]$, $q\in[1,2]$, $\alpha\in[0,1]$, and weighted $\ell_q-\alpha\ell_p$ \cite{zhan2025sparse} with $q\in(0,1]$, $p\in[q,+\infty]$, $\alpha\in[0,1]$. A second important family is given by ratio-based models, such as $\ell_1/\ell_2$ \cite{rahimi2019scale,wang2020accelerated}, $\ell_1/\ell_{\infty}$ \cite{wang2023variant}, and $\ell_1^2/\ell_2^2$ \cite{jia2025sparse}. More general ratio-type models include the $q$-ratio CMSV \cite{zhou2019q}, the $q$-ratio sparsity measure $(\ell_1/\ell_q)^{q/(q-1)}$ with $1<q\leq\infty$ \cite{zhou2021minimization}, $\ell_p/\ell_1$ with $0<p<1$ \cite{zhou2025recovery}, SPOQ \cite{cherni2020spoq}, and $\ell_p/\ell_q$ with $0<p\leq1$ and $q>1$ \cite{zhu2026stable}. Ratio-based models are particularly attractive because they preserve the scale invariance of the $\ell_0$ measure and can reduce the amplitude bias caused by homogeneous convex relaxations. Their highly nonconvex and nonsmooth nature, however, leaves several theoretical and algorithmic issues unresolved.

Many of the preceding ratio models can be viewed as special cases or variants of $\ell_p/\ell_q$. However, existing studies have mostly focused on specific choices of $p$ and $q$ \cite{repetti2014euclid,wang2021limited,zeng2021analysis,tao2022minimization}, leaving the recovery theory for the general $\ell_p^p/\ell_q^p$ model with $0<p\leq1$ and $q>1$ far from complete. Recently, Zhu et al. \cite{zhu2026stable} investigated the following generalized norm-ratio $\ell_p^p/\ell_q^p$ minimization problem
\begin{subequations}\label{eq:lp_lq_models}
	\begin{empheq}[left=\empheqlbrace]{align}
		&\mathcal{Q}_{p,q}: \inf_{\bm{x} \in \mathbb{R}^n\setminus\{\bm{0}\}} \left\{ \frac{\|\bm{x}\|_p^p}{\|\bm{x}\|_q^p} \mid \bm{A}\bm{x} = \bm{b} \right\}, \label{eq:lp_lq} \\
		&\mathcal{Q}_{p,q}^{\delta}: \inf_{\bm{x} \in \mathbb{R}^n\setminus\{\bm{0}\}} \left\{ \frac{\|\bm{x}\|_p^p}{\|\bm{x}\|_q^p} \mid \|\bm{A}\bm{x} - \bm{b}\|_2 \leq \delta \right\}, \label{eq:lp_lq_eps}
	\end{empheq}
\end{subequations}
where $0<p\le 1$ and $q>1$. They established stable recovery guarantees for such a problem based on the restricted isometry property (RIP) and the restricted orthogonality property (ROP). The main results of \cite{zhu2026stable} can be stated as follows: (i) Theorem 1 in \cite{zhu2026stable} derives a sufficient RIP condition expressed in terms of the RIC $\delta_{2k}$ and an associated error bound under the assumption that the true signal is $k$-sparse; (ii) Corollary 1 in \cite{zhu2026stable} is a special case to the important $\ell_1/\ell_q$ model; (iii) Theorem 2 in \cite{zhu2026stable} obtains a stable recovery guarantee by combining RIC and restricted orthogonality constant (ROC) estimates, in which the error bound depends on the dimension of the recovery signal, such a dependence is undesirable in compressed sensing. Thus, the first goal of this paper is to develop some new techniques  to obtain an error bound which is independent of the ambient dimension. 

Sampling complexity is another important aspect in the study of CS. Deterministic conditions such as NSP, RIP, ROP, or norm-ratio lower bounds clarify which geometric properties of the sensing matrix ensure recovery, but they do not by themselves specify how many random measurements are sufficient for these properties to hold. We would like to mention that Xu et al. \cite{xu2021analysis} gave the geometric analysis for lower bound of sample complexity of \eqref{eq:lp_lq} with $p=1$ and $q=2$, including local optimality, uniform recoverability, and random-matrix guarantees.  However, to the best of our knowledge, there is no report on the lower bound of sample complexity for \eqref{eq:lp_lq}. Therefore, our second goal is to give a lower bound of sample complexity for \eqref{eq:lp_lq} under some mild conditions.

It is well known that how to design algorithms with convergence guarantees for the generalized ratio model is an essential issue. Existing methods for ratio-based sparse recovery are largely designed for special structures. There have been numerous studies addressing the algorithms for solving the ratio models and some special cases of \eqref{eq:lp_lq_models}, as well as their regularized problem. For example, Zhou \cite{zhou2025recovery} solved the $\ell_p/\ell_1$ minimization by fast iterative shrinkage-thresholding method (FISTA); Zhan et al. \cite{zhan2025p} combined iterative reweighting, DCA, and bisection to apply weighted $\ell_p/\ell_q$ minimization to block $k$-sparse recovery;  Cherni et al. \cite{cherni2020spoq} introduced a logarithmic smoothing of the $\ell_p^p/\ell_q^p$ ratio and embedded the smoothed model into a trust-region majorize-minimize framework with convergence to a critical point under the Kurdyka--\L{}ojasiewicz property; Wang et al. \cite{wang2023variant} derived a closed-form proximal operator for the $\ell_1/\ell_\infty$ regularizer corresponding to \eqref{eq:lp_lq} with $p=1$ and $q=\infty$, and developed FISTA-type and ADMM-type algorithms for the associated regularized problem; and further Wang et al. \cite{wang2020accelerated} developed an adaptive Dinkelbach-type framework for \eqref{eq:lp_lq} with $p=1$ and $q=2$, together with a convergence analysis. It is worth mentioning that the work mentioned above relies on convex numerators, special denominators, smoothing devices, or model-specific proximal structures. Thus, the above algorithms can not be directly used to address \eqref{eq:lp_lq}. The third motivation of this paper is to design an effective algorithm for \eqref{eq:lp_lq} by employing the prox-linear Dinkelbach scheme and to establish its convergence.

The present paper is thus devoted to the studies of theory and algorithm for \eqref{eq:lp_lq_models} under some mild conditons. We obtain some improved recovery conditions and error bounds, and develop an implementable algorithm for \eqref{eq:lp_lq}. The main contributions of this paper can be summarized as follows:
 
\begin{enumerate}[{\rm(i)}]
	\item We develop a local optimality criterion for \eqref{eq:lp_lq}, which shows how the dynamic range of a sparse signal affects local recovery and gives a sufficient condition for uniform exact recovery. For the special case of \eqref{eq:lp_lq} with $p=1$, we further derive  high-probability sample complexity bounds under isotropic sub-Gaussian measurements.
	\item For the $k$-sparse setting of \eqref{eq:lp_lq_eps}, we establish an RIP-based stable recovery guarantee by using several new inequalities. Compared with \cite{zhu2026stable}, our result requires a weaker RIP condition and gives a tighter reconstruction error bound. 
	\item For compressible signals without the $k$-sparsity assumption of \eqref{eq:lp_lq_eps}, we refine the existing RIP--ROP recovery theory by removing its explicit dependence on the ambient dimension, and further we obtain an RIP-only guarantee by eliminating the ROC condition.
	\item We propose a prox-linear Dinkelbach scheme for \eqref{eq:lp_lq} and prove its convergence by combining a Dinkelbach-type transformation with a proximal quadratic term to guarantee the descent of the sequence.
\end{enumerate}

The rest of this paper is organized as follows. Section \ref{sec:Preliminaries} recalls the notation and basic concepts used throughout the paper. Section \ref{sec:LocalOptimality} establishes a local optimality condition for the $\ell_p^p/\ell_q^p$ model. Section \ref{sec:ExactRecovery} studies uniform exact recovery and derives high-probability guarantees for the $\ell_1/\ell_q$ case. Section \ref{sec:ReDCENprop} proves stable recovery under the RIP and the $k$-sparsity assumption, and compares the obtained bounds with existing results. Section \ref{sec:StablerecoverywithoutK} gives stable recovery results for compressible signals without the $k$-sparsity assumption. Section \ref{sec:Algorithm} presents the prox-linear Dinkelbach scheme and proves its convergence. Section \ref{sec:NumericalExperiments} reports numerical experiments. Finally, Section \ref{sec:Conclusion} concludes the work and discusses future work.

\section{Notation and Preliminaries}\label{sec:Preliminaries}

We collect in this section the notation and basic notions that will be used in the sequel. For a positive integer $n$, we write $[n]:=\{1,2,\ldots,n\}$. Given an index set $\mathcal{S}\subseteq [n]$, its complement in $[n]$ is denoted by $\mathcal{S}^c$. For a vector $\bm{u}\in\mathbb{R}^n$, the restricted vector $\bm{u}_{\mathcal{S}}$ is defined by $(\bm{u}_{\mathcal{S}})_i=u_i\mathbf{1}_{\{i\in\mathcal{S}\}}$ for each $i$. The support of the vector $\bm{u}$ is $\operatorname{supp}(\bm{u}):=\{i\in[n]: u_i\neq 0\}$, and $\bm{u}$ is said to be $s$-sparse if $\operatorname{card}(\operatorname{supp}(\bm{u}))\le s$. Equivalently, $\|\bm{u}\|_0:=\operatorname{card}(\operatorname{supp}(\bm{u}))$ counts the number of nonzero components of $\bm{u}$.

Throughout the paper, vectors and matrices are usually denoted by bold lowercase and bold uppercase letters, respectively. 
The zero vector is denoted by $\bm{0}$, and $\mathbb{R}^{m\times n}$ stands for the set of all real $m\times n$ matrices. 
For $0<p<\infty$, we use $\|\bm{u}\|_p:=\left(\sum_{i=1}^n |u_i|^p\right)^{1/p}$, while $\|\bm{u}\|_\infty:=\max_{1\le i\le n}|u_i|$.
When $0<p<1$, $\|\cdot\|_p$ is not a norm in the usual sense; nevertheless, following the common convention in sparse recovery, we still refer to it as the $\ell_p$ norm or quasi-norm when no ambiguity arises.

For later use, we also introduce the best sparse approximation error. 
For $p>0$ and $\bm{u}\in\mathbb{R}^n$, the best $s$-term approximation error of $\bm{u}$ in the $\ell_p$ norm is $\sigma_s(\bm{u})_p:=\inf_{\|\bm{z}\|_0\le s}\|\bm{u}-\bm{z}\|_p$. In particular, if $\bm{u}_{\max(s)}$ denotes the vector obtained by retaining the $s$ largest entries of $\bm{u}$ in magnitude and setting the others to zero, then $\bm{u}_{-\max(s)}:=\bm{u}-\bm{u}_{\max(s)}$
is the tail part of $\bm{u}$ after its best $s$ dominant entries are removed. We shall use the following null-space-type properties in the analysis.

\begin{definition}[$\ell_p$ null space property, $\ell_p$-NSP]
	Let $0<p\le 1$ and let $s$ be a positive integer. 
	A matrix $\bm{A}\in\mathbb{R}^{m\times n}$ is said to satisfy the $\ell_p$ null space property ($\ell_p$-NSP) of order $s$ if there exists a constant $\mu\in(0,1)$ such that $\|\bm{v}_{\mathcal{S}}\|_p^p <\mu \|\bm{v}_{\mathcal{S}^c}\|_p^p$ for every $\bm{v}\in\ker(\bm{A})\setminus\{\bm{0}\}$ and every index set $\mathcal{S}\subseteq[n]$ with $\operatorname{card}(\mathcal{S})\le s$.
\end{definition}

\begin{definition}[$\ell_q$ robust null space property, $\ell_q$-rNSP]
	Let $q\ge 1$, and let $\|\cdot\|$ be a norm on the measurement space. A matrix $\bm{A}\in\mathbb{R}^{m\times n}$ is said to satisfy the $\ell_q$ robust null space property ($\ell_q$-rNSP) of order $s$ with constants $\rho\in(0,1)$ and $\tau>0$ if $\|\bm{v}_{\mathcal{S}}\|_q \le \frac{\rho}{s^{1-1/q}}\|\bm{v}_{\mathcal{S}^c}\|_1	+ \tau \|\bm{A}\bm{v}\|$ holds for all $\bm{v}\in\mathbb{R}^n$ and all index sets $S\subseteq[n]$ satisfying $\operatorname{card}(\mathcal{S})\le s$.
\end{definition}

We next recall two matrix properties frequently used in compressed sensing. 
The first one quantifies the extent to which a sensing matrix preserves the Euclidean norm of sparse vectors.

\begin{definition}[Restricted isometry property, RIP]
	Let $\bm{A}$ be a real matrix and let $s$ be a positive integer. 
	The restricted isometry constant (RIC) of order $s$, denoted by $\delta_s$, is the smallest number for which $(1-\delta_s)\|\bm{u}\|_2^2	\le	\|\bm{A}\bm{u}\|_2^2	\le	(1+\delta_s)\|\bm{u}\|_2^2$ holds for all $s$-sparse vectors $\bm{u}$. If $\delta_s\in[0,1)$, then $\bm{A}$ is said to satisfy the $s$-restricted isometry property, abbreviated as $s$-RIP.
\end{definition}

The second property controls the near-orthogonality of the images of sparse vectors with prescribed sparsity levels.

\begin{definition}[Restricted orthogonality property, ROP]
	Let $s_1$ and $s_2$ be positive integers. 
	The restricted orthogonality constant (ROC) $\theta_{s_1,s_2}$ of a matrix $\bm{A}$ is the smallest nonnegative number such that $\|\langle \bm{A}\bm{u},\bm{\Phi}\bm{v}\rangle\| \le \theta_{s_1,s_2}\|\bm{u}\|_2\|\bm{v}\|_2$ for all $s_1$-sparse vectors $\bm{u}$ and all $s_2$-sparse vectors $\bm{v}$ with disjoint supports whenever such a disjointness assumption is required. In this case, $\bm{A}$ is said to satisfy the $(s_1,s_2)$-ROP with ROC $\theta_{s_1,s_2}$.
\end{definition}

Finally, we recall several standard notions from variational analysis that are needed for the algorithmic part of the papeet $F:\mathbb{R}^n\to(-\infty,+\infty]$ be an extended-real-valued function. 
Its effective domain is $\operatorname{dom}F:=\{\bm{x}\in\mathbb{R}^n:\ F(\bm{x})<+\infty\}$. The function $F$ is called proper if $\operatorname{dom}F\neq\varnothing$ and $F(\bm{x})>-\infty$ for all $\bm{x}\in\mathbb{R}^n$. It is lower semicontinuous (l.s.c) at $\bm{x}$ if, for every sequence $\bm{x}^k\to \bm{x}$, $F(\bm{x})\le \liminf_{k\to\infty} F(\bm{x}^k)$. If this property holds at every point, then $F$ is said to be l.s.c. A function is called closed if it is proper and l.s.c. For a point $\bm{x}^*\in\operatorname{dom}F$, we call $\bm{x}^*$ a critical point, or stationary point, of $F$ if $\bm{0}\in \partial F(\bm{x}^*)$, where $\partial F$ denotes the limiting subdifferential.

\section{Local Optimality Criterion of $\mathcal{Q}_{p,q}$- Minimization}\label{sec:LocalOptimality}

In this section, we derive a strict local optimality condition that ensure a non-zero $s$-sparse vector $\bm{x}_0$ is a strict local minimizer of \eqref{eq:lp_lq}. We start with the definition of the generalized norm-ratio coefficient.
\begin{definition}[Generalized Norm-Ratio Coefficient]
	Let $\bm{x}_0 \in \mathbb{R}^n$ be a non-zero vector. We define its generalized norm-ratio coefficient (GNRC) $\kappa_{p,q}(\bm{x}_0)$ as
	\begin{equation*}
		\kappa_{p,q}(\bm{x}_0) := \frac{\|\bm{x}_0\|_p^p \|\bm{x}_0\|_\infty^{q-p}}{\|\bm{x}_0\|_q^q}. \label{eq:def_kappa}
	\end{equation*}
\end{definition}

In the next theorem, we show that the local optimality criterion of $\mathcal{Q}_{p,q}$. Specifically, there exists a strictly positive, uniform radius $\delta_r > 0$ such that for any $\bm{h} \in \ker(\bm{A}) \setminus \{\bm{0}\}$ with $\|\bm{h}\|_\infty \le \delta_r$, we have $\frac{\|\bm{x}_0 + \bm{h}\|_p^p}{\|\bm{x}_0 + \bm{h}\|_q^p} > \frac{\|\bm{x}_0\|_p^p}{\|\bm{x}_0\|_q^p}$.
\begin{theorem}[Local optimality]\label{thm:local_optimality}
	Let $\bm{x}_0\in\mathbb{R}^n\setminus\{\bm{0}\}$ be an $s$-sparse vector with support $\mathcal{S}:=\operatorname{supp}(\bm{x}_0)$, and set $\bm{b}=\bm{A}\bm{x}_0$. Assume that $\bm{A}\in\mathbb{R}^{m\times n}$ satisfies the $\ell_p$-NSP of order $s$ with a constant $\mu\in(0,1)$. Then $\bm{x}_0$ is a strict local minimizer of \eqref{eq:lp_lq} over the feasible set $\{\bm{x}\in\mathbb{R}^n:\bm{A}\bm{x}=\bm{b}\}$ under the following conditions:
	\begin{enumerate}[{\rm(i)}]
		\item If $p=1$, assume in addition that $\mu \le	\frac{1}{1+\kappa_{1,q}(\bm{x}_0)}$.
		\item If $0<p<1$, no further restriction on $\mu$ is required beyond the $\ell_p$-NSP condition.
	\end{enumerate}
\end{theorem}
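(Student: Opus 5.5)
The plan is to compare the two sides of the ratio directly along the null space. Fix $\bm{h}\in\ker(\bm{A})\setminus\{\bm{0}\}$ with $t:=\|\bm{h}\|_\infty\le\delta_r$. Here $\delta_r\le m/2$ and $m:=\min_{i\in\mathcal{S}}|x_{0,i}|$. It suffices to show
\[
\|\bm{x}_0+\bm{h}\|_p^p\,\|\bm{x}_0\|_q^p>\|\bm{x}_0\|_p^p\,\|\bm{x}_0+\bm{h}\|_q^p .
\]
First, the $\ell_p$-NSP forces $\bm{h}_{\mathcal{S}^c}\neq\bm{0}$: otherwise $\|\bm{h}_{\mathcal{S}}\|_p^p<\mu\cdot 0$, which is impossible. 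The whole argument then consists of showing that the positive quantity $\|\bm{h}_{\mathcal{S}^c}\|_p^p$ dominates all losses. I will split the numerator and denominator over $\mathcal{S}$ and $\mathcal{S}^c$.

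\textbf{Numerator lower bound.} For $i\in\mathcal{S}$ and $|h_i|\le |x_{0,i}|/2$, the mean value theorem gives $|x_{0,i}+h_i|^p\ge |x_{0,i}|^p-c_p|h_i|$. One may take $c_p=p(m/2)^{p-1}$, and $c_1=1$ (for $p=1$ this is just the triangle inequality). Hence
\[
\|\bm{x}_0+\bm{h}\|_p^p\ge\|\bm{x}_0\|_p^p-c_p\|\bm{h}_{\mathcal{S}}\|_1+\|\bm{h}_{\mathcal{S}^c}\|_p^p .
\]

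\textbf{Denominator upper bound.} A second-order Taylor expansion of $|x_{0,i}+h_i|^q$ on $\mathcal{S}$, valid since $x_{0,i}$ stays bounded away from $0$, gives
\[
\|\bm{x}_0+\bm{h}\|_q^q\le\|\bm{x}_0\|_q^q+q\|\bm{x}_0\|_\infty^{q-1}\|\bm{h}_{\mathcal{S}}\|_1+C t\|\bm{h}_{\mathcal{S}}\|_1+\|\bm{h}_{\mathcal{S}^c}\|_q^q .
\]
Then use $(1+u)^{p/q}\le 1+\tfrac{p}{q}u$, which holds since $p/q<1$. Together with $\|\bm{h}_{\mathcal{S}^c}\|_q^q\le t^{q-p}\|\bm{h}_{\mathcal{S}^c}\|_p^p$, this yields
\[
\|\bm{x}_0\|_p^p\Bigl(\tfrac{\|\bm{x}_0+\bm{h}\|_q^p}{\|\bm{x}_0\|_q^p}-1\Bigr)\le p\,\kappa_{p,q}(\bm{x}_0)\|\bm{h}_{\mathcal{S}}\|_1+C' t\|\bm{h}_{\mathcal{S}}\|_1+C''t^{q-p}\|\bm{h}_{\mathcal{S}^c}\|_p^p .
\]
The factor $\kappa_{p,q}$ appears exactly from $\|\bm{x}_0\|_p^p\|\bm{x}_0\|_\infty^{q-1}/\|\bm{x}_0\|_q^q$, up to the normalization $\|\bm{x}_0\|_\infty^{1-p}$, which is $1$ when $p=1$.

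Combining the two bounds, it remains to prove
\[
\|\bm{h}_{\mathcal{S}^c}\|_p^p\bigl(1-C''t^{q-p}\bigr)>\bigl(c_p+p\kappa'+C't\bigr)\|\bm{h}_{\mathcal{S}}\|_1 .
\]

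\textbf{Case $0<p<1$.} Use $\|\bm{h}_{\mathcal{S}}\|_1\le t^{1-p}\|\bm{h}_{\mathcal{S}}\|_p^p\le t^{1-p}\mu\|\bm{h}_{\mathcal{S}^c}\|_p^p$. The right side is then $O(t^{1-p})\|\bm{h}_{\mathcal{S}^c}\|_p^p$, while the left side is $(1-o(1))\|\bm{h}_{\mathcal{S}^c}\|_p^p$. So a small enough $\delta_r$ works for any $\mu<1$.

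\textbf{Case $p=1$.} The needed inequality becomes $\|\bm{h}_{\mathcal{S}^c}\|_1(1-Ct^{q-1})>(1+\kappa_{1,q}(\bm{x}_0)+C't)\|\bm{h}_{\mathcal{S}}\|_1$. Here the hypothesis $\mu(1+\kappa_{1,q})\le1$ only gives $(1+\kappa_{1,q})\|\bm{h}_{\mathcal{S}}\|_1<\|\bm{h}_{\mathcal{S}^c}\|_1$ with no quantitative margin. I expect this to be the main obstacle: the $O(t)$ error terms must be absorbed uniformly in $\bm{h}$.

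The fix I would try first is compactness. The NSP inequality is strict for every nonzero $\bm{h}$ in $\ker(\bm{A})$ and every $\mathcal{S}$, there are finitely many $\mathcal{S}$, and $\bm{h}\mapsto \|\bm{h}_{\mathcal{S}}\|_1/\|\bm{h}_{\mathcal{S}^c}\|_1$ is continuous and scale invariant on the compact set $\{\bm{h}\in\ker\bm{A}:\|\bm{h}\|_1=1\}$, where the denominator is nonzero. Hence the supremum $\mu'$ of this ratio is attained and satisfies $\mu'<\mu\le 1/(1+\kappa_{1,q})$. This strict gap gives a fixed fractional margin $1-\mu'(1+\kappa_{1,q})>0$. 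Choosing $\delta_r$ so that the $O(t)$ and $O(t^{q-1})$ perturbations are smaller than this margin completes the argument. The radius $\delta_r$ depends only on $\bm{x}_0$, $q$, $\mu'$, and not on $\bm{h}$, which gives strict local minimality.
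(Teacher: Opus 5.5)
Your proof is correct. It follows the paper's skeleton:
\begin{itemize}
\item restrict to $\bm{h}\in\ker(\bm{A})$ with $\|\bm{h}\|_\infty\le\delta_r$ and split over $\mathcal{S}$ and $\mathcal{S}^c$;
\item bound $\|\bm{x}_0+\bm{h}\|_q^q$ by a first-order expansion on $\mathcal{S}$ plus $t^{q-p}\|\bm{h}_{\mathcal{S}^c}\|_p^p$ off it, then apply Bernoulli;
\item for $p=1$, use compactness to upgrade the pointwise strict NSP to a uniform constant $\mu'<\mu\le 1/(1+\kappa_{1,q}(\bm{x}_0))$. This is needed exactly in the boundary case $\mu=1/(1+\kappa_{1,q}(\bm{x}_0))$. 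The paper normalizes on $\{\bm{h}\in\ker(\bm{A}):\|\bm{h}_{\mathcal{S}^c}\|_p=1\}$; your unit $\ell_1$-sphere of $\ker(\bm{A})$ makes compactness immediate.
\end{itemize}

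The genuine difference is the numerator bound when $0<p<1$.
\begin{itemize}
\item The paper uses subadditivity, $|x_{0,i}+h_i|^p\ge|x_{0,i}|^p-|h_i|^p$. This loses the full $\|\bm{h}_{\mathcal{S}}\|_p^p$, which is of the same order as the gain $\|\bm{h}_{\mathcal{S}^c}\|_p^p$. Its limiting condition in case (ii) is therefore $1>\mu_*$, so it relies on the NSP constant being below $1$.
\item You instead linearize $|\cdot|^p$ on $\mathcal{S}$, where the coordinates stay at least $\min_{i\in\mathcal{S}}|x_{0,i}|/2$ in magnitude. The loss becomes $c_p\|\bm{h}_{\mathcal{S}}\|_1\le c_p t^{1-p}\|\bm{h}_{\mathcal{S}}\|_p^p$, which is negligible relative to the gain as $t\to0^+$.
\end{itemize}
Consequently your case (ii) needs neither compactness nor $\mu<1$. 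Any finite constant in $\|\bm{h}_{\mathcal{S}}\|_p^p\le\mu\|\bm{h}_{\mathcal{S}^c}\|_p^p$ would do, and by compactness such a constant exists as soon as no nonzero kernel vector is supported in $\mathcal{S}$. Your route thus shows that (ii) holds under a strictly weaker hypothesis than the stated $\ell_p$-NSP. The price is constants depending on $\min_{i\in\mathcal{S}}|x_{0,i}|$, which is harmless for a qualitative local statement. For $p=1$ one has $c_1=1$, and the two arguments coincide.

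Two cosmetic points:
\begin{itemize}
\item For $p<1$ the coefficient in your displayed denominator estimate should be $p\,\kappa_{p,q}(\bm{x}_0)\|\bm{x}_0\|_\infty^{p-1}$, not $p\,\kappa_{p,q}(\bm{x}_0)$. You flag this, and it is immaterial there.
\item The letter $m$ already denotes the number of measurements.
\end{itemize}
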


\begin{proof}
	Let $\bm{h}\in\ker(\bm{A})\setminus\{\bm{0}\}$. 
	Since every feasible point sufficiently close to $\bm{x}_0$ can be written as $\bm{x}_0+\bm{h}$ with $\bm{h}\in\ker(\bm{A})$, it is enough to prove that $\frac{\|\bm{x}_0+\bm{h}\|_p^p}{\|\bm{x}_0+\bm{h}\|_q^p} > \frac{\|\bm{x}_0\|_p^p}{\|\bm{x}_0\|_q^p}$ for all nonzero $\bm{h}\in\ker(\bm{A})$ with sufficiently small norm. Equivalently, we shall show
	\begin{equation}\label{eq:ratio_ineq}
		\frac{\|\bm{x}_0+\bm{h}\|_p^p}{\|\bm{x}_0\|_p^p}
		>
		\left(
		\frac{\|\bm{x}_0+\bm{h}\|_q^q}{\|\bm{x}_0\|_q^q}
		\right)^{p/q}.
	\end{equation}
	
	Choose $\delta_r>0$ such that $0<\delta_r<\min_{i\in\mathcal{S}} |x_{0,i}|$ and $\text{sgn}(x_{0,i} + h_i) = \text{sgn}(x_{0,i})$ for all $i \in \mathcal{S}$. We restrict attention to perturbations satisfying $\|\bm{h}\|_\infty\le\delta_r$. By the subadditivity of $t\mapsto |t|^p$ for $0<p\le 1$, one has $|x+h|^p \ge |x|^p - |h|^p$. Splitting the vector into the support $\mathcal{S}$ and its complement $\mathcal{S}^c$, we have
	\begin{equation}
		\|\bm{x}_0 + \bm{h}\|_p^p = \sum_{i \in \mathcal{S}} |x_{0,i} + h_i|^p + \sum_{i \in \mathcal{S}^c} |h_i|^p \ge \|\bm{x}_0\|_p^p - \|\bm{h}_{\mathcal{S}}\|_p^p + \|\bm{h}_{\mathcal{S}^c}\|_p^p. \label{eq:lp_lower_bound}
	\end{equation}
	
	Next we estimate the $\ell_q$ term. 
	For $i\in\mathcal{S}$, the mean value theorem applied to $t\mapsto |t|^q$ gives $|x_{0,i}+h_i|^q-|x_{0,i}|^q \le q\bigl(|x_{0,i}|+|h_i|\bigr)^{q-1}|h_i|$. Thus, over the support $\mathcal{S}$, one has
	\begin{equation*}
		\sum_{i \in \mathcal{S}} |x_{0,i} + h_i|^q \le \|\bm{x}_0\|_q^q + q (\|\bm{x}_0\|_\infty + \delta_r)^{q-1} \sum_{i \in \mathcal{S}} |h_i|.
	\end{equation*}
	For $i \in \mathcal{S}^c$, it follows from $\|\bm{h}\|_\infty\le\delta_r$ that $|h_i|^q = |h_i|^{q-p} |h_i|^p \le \delta_r^{q-p} |h_i|^p$. 
	We rewrite $|h_i|$ as $|h_i| = |h_i|^{1-p} |h_i|^p \le \delta_r^{1-p} |h_i|^p$ for any $i\in\mathcal{S}$. This yields an upper bound
	\begin{equation}\label{eq:lq_upper_bound}
		\|\bm{x}_0+\bm{h}\|_q^q	\le	\|\bm{x}_0\|_q^q + q(\|\bm{x}_0\|_\infty+\delta_r)^{q-1}	\delta_r^{1-p}
		\|\bm{h}_{\mathcal{S}}\|_p^p + \delta_r^{q-p} \|\bm{h}_{\mathcal{S}^c}\|_p^p.
	\end{equation}
	Using Bernoulli's inequality $(1+z)^\alpha\le 1+\alpha z$ for $z\ge0$ and $\alpha=p/q\in(0,1)$, \eqref{eq:lq_upper_bound} implies
	\begin{equation}\label{eq:lq_ratio_upper_bound}
		\left(\frac{\|\bm{x}_0+\bm{h}\|_q^q}{\|\bm{x}_0\|_q^q}\right)^{p/q} \le\,1 + p(\|\bm{x}_0\|_\infty+\delta_r)^{q-1}\delta_r^{1-p} \frac{\|\bm{h}_{\mathcal{S}}\|_p^p}{\|\bm{x}_0\|_q^q}+\frac{p}{q}
		\delta_r^{q-p}\frac{\|\bm{h}_{\mathcal{S}^c}\|_p^p}{\|\bm{x}_0\|_q^q}.
	\end{equation}
	Combining \eqref{eq:lp_lower_bound} and \eqref{eq:lq_ratio_upper_bound}, a sufficient condition for 
	\eqref{eq:ratio_ineq} is
	\begin{equation}\label{eq:sufficient_local_ineq}
		\|\bm{h}_{\mathcal{S}^c}\|_p^p
		\left(1-\frac{p}{q}\delta_r^{q-p}\frac{\|\bm{x}_0\|_p^p}{\|\bm{x}_0\|_q^q}\right) >\|\bm{h}_{\mathcal{S}}\|_p^p\left(1+p(\|\bm{x}_0\|_\infty+\delta_r)^{q-1}\delta_r^{1-p}\frac{\|\bm{x}_0\|_p^p}{\|\bm{x}_0\|_q^q}\right).
	\end{equation}
	
	It remains to verify that \eqref{eq:sufficient_local_ineq} holds uniformly for all sufficiently small nonzero $\bm{h}\in\ker(\bm{A})$. 
	By the $\ell_p$-NSP applied to $\mathcal{S}$, every nonzero $\bm{h}\in\ker(\bm{A})$ satisfies $\|\bm{h}_{\mathcal{S}}\|_p^p<\mu\|\bm{h}_{\mathcal{S}^c}\|_p^p$. In particular, $\|\bm{h}_{\mathcal{S}^c}\|_p>0$. Consider the normalized set $\mathcal{K}	:=	\{	\bm{h}\in\ker(\bm{A}): \|\bm{h}_{\mathcal{S}^c}\|_p=1\}$. Then $\mathcal{K}$ is nonempty and closed. Moreover, the NSP gives $\|\bm{h}_{\mathcal{S}}\|_p^p<\mu$ for all $\bm{h}\in\mathcal{K}$, so $\mathcal{K}$ is bounded. The continuous function $R(\bm{h}):=\frac{\|\bm{h}_{\mathcal{S}}\|_p^p}{\|\bm{h}_{\mathcal{S}^c}\|_p^p}$ therefore attains its maximum on $\mathcal{K}$. Denote this maximum by $\mu_*$. Since $R(\bm{h})<\mu$ pointwise on $\mathcal{K}$, compactness yields $\mu_*<\mu<1$. Consequently, for every nonzero $\bm{h}\in\ker(\bm{A})$, $\frac{\|\bm{h}_{\mathcal{S}}\|_p^p}	{\|\bm{h}_{\mathcal{S}^c}\|_p^p} \le\mu_*$. Dividing \eqref{eq:sufficient_local_ineq} by $\|\bm{h}_{\mathcal{S}^c}\|_p^p$, it is enough to show
	\begin{equation}\label{eq:delta_condition}
		1 -	\frac{p}{q}	\delta_r^{q-p}\frac{\|\bm{x}_0\|_p^p}{\|\bm{x}_0\|_q^q}	>\mu_* \left(1	+ p(\|\bm{x}_0\|_\infty+\delta_r)^{q-1}\delta_r^{1-p}	\frac{\|\bm{x}_0\|_p^p}{\|\bm{x}_0\|_q^q}\right).
	\end{equation}
	
	If $0<p<1$, then both $\delta_r^{q-p}$ and $\delta_r^{1-p}$ tend to zero as $\delta_r\to0^+$.  It follows that the left-hand side (LHS) of \eqref{eq:delta_condition} tends to $1$, while the right-hand side (RHS) tends to $\mu_*<1$. Hence \eqref{eq:delta_condition} holds for all sufficiently small $\delta_r>0$.  Thus, for $0<p<1$, the standard $\ell_p$-NSP alone is sufficient to guarantee strict local minimality, and no additional upper bound on the NSP constant $\mu$ is needed. If $p=1$, then $\delta_r^{1-p}=1$, and \eqref{eq:delta_condition} tends, as $\delta_r\to0^+$, to
	\begin{equation*}
		1>\mu_*	\left(1+\|\bm{x}_0\|_\infty^{q-1} \frac{\|\bm{x}_0\|_1}{\|\bm{x}_0\|_q^q}\right)=\mu_*\bigl(1+\kappa_{1,q}(\bm{x}_0)\bigr).
	\end{equation*}
	By assumption, $\mu_*<\mu\le\frac{1}{1+\kappa_{1,q}(\bm{x}_0)}$, and therefore the above strict inequality holds. Thus \eqref{eq:delta_condition} again holds for all sufficiently small $\delta_r>0$. This proves that every sufficiently small nonzero feasible perturbation strictly increases the objective value. 
	Hence $\bm{x}_0$ is a strict local minimizer of \eqref{eq:lp_lq}.
\end{proof}

The preceding theorem gives a signal-dependent local optimality condition through the coefficient $\kappa_{1,q}(\bm{x}_0)$ when $p=1$. For uniform statements over all $s$-sparse signals, it is useful to replace this quantity by its worst-case upper bound on the class of nonzero $s$-sparse vectors. The following corollary provides such a uniform estimate and consequently yields a signal-independent sufficient condition for local optimality.

\begin{corollary}[Uniform local optimality]\label{cor:uniform_local}
	Let $p=1$, $q>1$, and $s$ be a positive integer. For nonzero $s$-sparse vectors, the coefficient $\kappa_{1,q}$ satisfies
$$
		\sup_{\substack{\bm{x}\neq\bm{0},\; \|\bm{x}\|_0\le s}}	\kappa_{1,q}(\bm{x})=K_{1,q,s},
$$
	where $K_{1,q,1}=1$	when $s=1$, and when $s\ge2$, $K_{1,q,s}=\frac{1+(s-1)x^*}{1+(s-1)(x^*)^q}$. Here $x^*\in(0,1)$ is the unique positive solution of $1-qx^{q-1}	=	(q-1)(s-1)x^q$. Moreover, if $\bm{A}$ satisfies the $\ell_1$-NSP of order $s$ with a constant $\mu	\le	\frac{1}{1+K_{1,q,s}}$, then, for every nonzero $s$-sparse vector $\bm{x}_0$, the vector $\bm{x}_0$ is a strict local minimizer of the $\ell_1/\ell_q$ minimization over the feasible set $\{\bm{x}\in\mathbb{R}^n:\bm{A}\bm{x}=\bm{A}\bm{x}_0\}$.
\end{corollary}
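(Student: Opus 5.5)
The plan is to reduce the supremum to a one-parameter optimization over normalized magnitudes and then invoke Theorem~\ref{thm:local_optimality}(i). Since $\kappa_{1,q}$ depends only on the absolute values of the entries, is invariant under permutations, and is homogeneous of degree zero, we may assume $\bm{x}\ge 0$, $\|\bm{x}\|_\infty = x_1 = 1$, and $\operatorname{supp}(\bm{x})\subseteq\{1,\dots,s\}$. With $t_i := x_i\in[0,1]$ for $i=2,\dots,s$, we then have
\[
\kappa_{1,q}(\bm{x}) = \frac{1+\sum_{i\ge2} t_i}{1+\sum_{i\ge2} t_i^q}.
\]
If $s=1$ there are no $t_i$, so $\kappa=1$ and $K_{1,q,1}=1$.

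For $s\ge2$, the first step is to show the maximum is attained at equal entries $t_2=\dots=t_s=x$. Fix $T:=\sum t_i$. Because $t\mapsto t^q$ is convex, Jensen's inequality gives $\sum t_i^q\ge (s-1)(T/(s-1))^q$, with equality when all $t_i$ coincide. Equalizing therefore keeps the numerator and lowers the denominator, so it increases $\kappa$. This leaves the scalar problem of maximizing
\[
g(x)=\frac{1+(s-1)x}{1+(s-1)x^q},\qquad x\in[0,1].
\]
The supremum over $[0,1]^{s-1}$ is then a maximum by compactness; the region $\bm{x}\ne 0$ is handled by the normalization.

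The second step is to analyze $g$. The derivative condition $g'(x)=0$ is
\[
(s-1)\bigl(1+(s-1)x^q\bigr)=q(s-1)x^{q-1}\bigl(1+(s-1)x\bigr).
\]
After dividing by $s-1$, this simplifies to $1-qx^{q-1}=(q-1)(s-1)x^q$, which is exactly the stated equation. Set $\phi(x):=1-qx^{q-1}-(q-1)(s-1)x^q$. Then $\phi(0)=1>0$, $\phi(1)=(1-q)s<0$, and $\phi$ is strictly decreasing on $(0,1)$ since $q>1$. Hence there is a unique root $x^*\in(0,1)$, $g'>0$ on $(0,x^*)$ and $g'<0$ on $(x^*,1)$ (the sign of $g'$ equals the sign of $\phi$), so $x^*$ is the global maximizer on $[0,1]$. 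This gives $K_{1,q,s}=g(x^*)$.

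The final step is the local optimality claim. For every nonzero $s$-sparse $\bm{x}_0$ we have $\kappa_{1,q}(\bm{x}_0)\le K_{1,q,s}$, so
\[
\mu\le \frac{1}{1+K_{1,q,s}}\le \frac{1}{1+\kappa_{1,q}(\bm{x}_0)},
\]
and Theorem~\ref{thm:local_optimality}(i) applies directly. The only step needing care is the reduction to equal entries, and Jensen's inequality settles it cleanly. I expect no serious obstacle beyond verifying the sign pattern of $g'$.
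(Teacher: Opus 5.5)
Your proposal is correct and follows essentially the same route as the paper. Both arguments normalize to $\|\bm{x}\|_\infty=1$, use Jensen's inequality to reduce to $g(x)=\frac{1+(s-1)x}{1+(s-1)x^q}$ on $[0,1]$, locate the unique maximizer $x^*$ through a strictly monotone auxiliary function (your $\phi$ is exactly $-H$ in the paper), and then invoke Theorem~\ref{thm:local_optimality}(i). The equality $\sup\kappa_{1,q}=K_{1,q,s}$ needs attainment, which your argument implicitly has; state explicitly that $g(x^*)$ is attained at the vector with magnitudes $(1,x^*,\ldots,x^*)$.
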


\begin{proof}
	The case $s=1$ is immediate. Indeed, if $\bm{x}_0$ is nonzero and one-sparse, then $\kappa_{1,q}(\bm{x}_0)=1$,	and hence $K_{1,q,1}=1$.
	
	We now assume $s\ge2$. Let $\bm{x}_0\in\mathbb{R}^n\setminus\{\bm{0}\}$ be arbitrary with $\|\bm{x}_0\|_0\le s$. Since $\kappa_{1,q}$ is invariant under permutations, sign changes, and nonzero scalings, we may assume without loss of generality that $\|\bm{x}_0\|_\infty=1$ and $1=x_{0,1}\ge x_{0,2}\ge\cdots\ge x_{0,s} \ge 0$, where zero components are appended if the actual support size of $\bm{x}_0$ is smaller than $s$. Under this normalization,
$$
		\kappa_{1,q}(\bm{x}_0)=	\frac{1+\sum_{i=2}^s x_{0,i}} {1+\sum_{i=2}^s x_{0,i}^q}.
$$
	To rigorously find the global maximum over the hypercube $[0, 1]^{s-1}$, we define the arithmetic mean of the tail components
$$
		\nu	:= \frac{1}{s-1}\sum_{i=2}^s x_{0,i} \in[0,1].
$$
	Then the numerator is equal to $1+(s-1)\nu$. 
	Since $t\mapsto t^q$ is strictly convex for $q>1$, Jensen's inequality gives
	\begin{equation*}
		\frac{1}{s-1} \sum_{i=2}^s x_{0,i}^q \ge ( \frac{1}{s-1} \sum_{i=2}^s x_{0,i} )^q = \nu^q \implies \sum_{i=2}^s x_{0,i}^q \ge (s-1)\nu^q.
	\end{equation*}
	Applying this strict lower bound to the denominator, we have
$$
		\kappa_{1,q}(\bm{x}_0) \le \frac{1+(s-1)\nu}{1+(s-1)\nu^q}=:g(\nu).
$$
	Equality holds if and only if $x_{0,2}=x_{0,3}=\cdots=x_{0,s}=\nu$. This formulation inherently covers all $k \le s$ support sizes since $\nu$ spans down to $0$. It remains to maximize $g$ on $[0,1]$. Taking the derivative of $g(\nu)$ with respect to $\nu$, one has
$$
		g'(\nu) = \frac{s-1}{[1 + (s-1)\nu^q]^2} \left( 1 - q\nu^{q-1} - (q-1)(s-1)\nu^q \right).
$$
	Define $H(\nu):=(q-1)(s-1)\nu^q+q\nu^{q-1}-1$. Then $\operatorname{sgn}(g'(\nu))=-\operatorname{sgn}(H(\nu))$. Although $\nu$ may take the endpoint value $0$, the derivative of $H$ is considered on $(0,1]$. For $\nu\in(0,1]$, we have $H'(\nu)=q(q-1)(s-1)\nu^{q-1}+q(q-1)\nu^{q-2}>0$ for $ \nu\in(0,1]$.
	Hence $H$ is strictly increasing on $(0,1]$. Moreover, $\lim_{\nu\to0^+}H(\nu)=-1$ and $H(1)=(q-1)s>0.$ Therefore, there exists a unique $x^*\in(0,1)$ such that $H(x^*)=0$, namely, $1-q(x^*)^{q-1}=(q-1)(s-1)(x^*)^q$. Consequently, $g$ is strictly increasing on $(0,x^*)$ and strictly decreasing on $(x^*,1]$. Hence
$$
		g(x^*)=\frac{1+(s-1)x^*}{1+(s-1)(x^*)^q}
$$
	is the maximum of $g$ over $[0,1]$. Since the above argument holds for every nonzero $\bm{x}_0$ with $\|\bm{x}_0\|_0\le s$, and equality is attained by vectors whose nonzero magnitudes are proportional to $(1,x^*,x^*,\ldots,x^*)$, we obtain
	\begin{equation*}
		\sup_{\substack{\bm{x}_0\neq\bm{0}\\ \|\bm{x}_0\|_0\le s}} \kappa_{1,q}(\bm{x}_0)=\frac{1+(s-1)x^*}{1+(s-1)(x^*)^q}=K_{1,q,s}.
	\end{equation*}
	
	Finally, if $\bm{A}$ satisfies the $\ell_1$-NSP of order $s$ with $\mu \le \frac{1}{1+K_{1,q,s}}$, then for every nonzero $s$-sparse vector $\bm{x}_0$, one has
$$
		\mu	\le	\frac{1}{1+K_{1,q,s}} \le \frac{1}{1+\kappa_{1,q}(\bm{x}_0)}.
$$
	The conclusion follows from Theorem~\ref{thm:local_optimality}.
\end{proof}

\section{Exact Recovery for $\mathcal{Q}_{p,q}$ Minimization}\label{sec:ExactRecovery}
In this section, we propose a sufficient condition that guarantees the exact recovery for sparse vectors using $\mathcal{Q}_{p,q}$-minimization. As we will see, the condition to be obtained will hold with overwhelming probability for a large class of sub-Gaussian random matrices. 
\subsection{Uniform recoverability}
In this subsection, we turn to the uniform exact recovery property of $\mathcal{Q}_{p,q}$. Unlike the local optimality result, which depends on the particular sparse vector through signal-dependent quantities, a uniform recovery guarantee requires a condition ensuring that every $s$-sparse signal is recovered as the unique global minimizer. The following theorem gives such a sufficient condition in terms of a null-space norm-ratio lower bound. For convenience, we first recall the definition of sub-Gaussian and isotropic vectors.

\begin{definition}[Sub-Gaussian random vector]
	Let $\bm{x}$ be an $\mathbb{R}^n$-valued random vector. We call $\bm{x}$ sub-Gaussian if every linear functional of $\bm{x}$ has a finite sub-Gaussian Orlicz norm. More precisely, for each $\bm{g}\in\mathbb{R}^n$, the scalar random variable $\langle \bm{g},\bm{x}\rangle$ satisfies
	\begin{equation*}
		\|\langle \bm{g},\bm{x}\rangle\|_{\psi_2}:=\inf\{t>0:\mathbb{E}\exp(\frac{|\langle \bm{g},\bm{x}\rangle|^2}{t^2})\le 2\}<\infty .
	\end{equation*}
	The sub-Gaussian norm of the random vector $\bm{x}$ is then defined by $\|\bm{x}\|_{\psi_2}:=\sup_{\|\bm{g}\|_2=1}\|\langle \bm{g},\bm{x}\rangle\|_{\psi_2}$. In particular, if $\bm{x}\sim\mathcal{N}(\bm{0},\bm{I}_n)$, then every one-dimensional projection $\langle \bm{g},\bm{x}\rangle$ with $\|\bm{g}\|_2=1$ follows the standard normal distribution, and hence $\|\bm{x}\|_{\psi_2}=\|\mathcal{N}(0,1)\|_{\psi_2}$.
\end{definition}

\begin{definition}[Isotropic random vector]
	An $\mathbb{R}^n$-valued random vector $\bm{x}$ is called isotropic if its second-moment matrix equals the identity matrix, namely, $\mathbb{E}[\bm{x}\bm{x}^{T}]=\bm{I}_n$. Equivalently, for every $\bm{g}\in\mathbb{R}^n$, $\mathbb{E}[\langle \bm{g},\bm{x}\rangle^2]=\|\bm{g}\|_2^2$.
\end{definition}
We next turn to the uniform exact recovery property of the proposed norm-ratio model. 
Unlike the local optimality result, which depends on the particular sparse vector through signal-dependent quantities, a uniform recovery guarantee requires a condition ensuring that every $s$-sparse signal is recovered as the unique global minimizer. 
The following theorem gives such a sufficient condition in terms of a null-space norm-ratio lower bound.

\begin{theorem}[Uniform exact recovery]\label{thm:exact_recovery}
	Let $0<p\le 1$, $q>1$, and let $s\in\mathbb{N}$. 
	Assume that the measurement matrix $\bm{A}\in\mathbb{R}^{m\times n}$ satisfies
	\begin{equation}\label{eq:null_space_cond}
		\inf_{\bm{h}\in\ker(\bm{A})\setminus\{\bm{0}\}}
		\frac{\|\bm{h}\|_p}{\|\bm{h}\|_q}
		>
		3^{1/p}s^{1/p-1/q}.
	\end{equation}
	Then every nonzero $s$-sparse vector $\bm{x}_0$ is the unique global minimizer of $\mathcal{Q}_{p,q}$. Consequently, $\mathcal{Q}_{p,q}$ exactly recovers all nonzero $s$-sparse signals.
\end{theorem}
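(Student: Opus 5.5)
The plan is to compare the objective at an arbitrary feasible point $\bm{x}=\bm{x}_0+\bm{h}$, with $\bm{h}\in\ker(\bm{A})\setminus\{\bm{0}\}$, directly against the objective at $\bm{x}_0$. The argument is global, so unlike Theorem~\ref{thm:local_optimality} no smallness of $\bm{h}$ and no compactness are needed. Let $\mathcal{S}=\operatorname{supp}(\bm{x}_0)$, so $\operatorname{card}(\mathcal{S})\le s$, and write
\[
r:=\frac{\|\bm{x}_0\|_p^p}{\|\bm{x}_0\|_q^p}, \qquad c:=s^{1-p/q}.
\]

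First I will bound the numerator from below. As in \eqref{eq:lp_lower_bound}, the subadditivity of $t\mapsto|t|^p$ gives
\[
\|\bm{x}_0+\bm{h}\|_p^p\ \ge\ \|\bm{x}_0\|_p^p-\|\bm{h}_{\mathcal{S}}\|_p^p+\|\bm{h}_{\mathcal{S}^c}\|_p^p
=\|\bm{x}_0\|_p^p+\|\bm{h}\|_p^p-2\|\bm{h}_{\mathcal{S}}\|_p^p .
\]
Hölder's inequality on a support of size at most $s$ gives $\|\bm{h}_{\mathcal{S}}\|_p^p\le c\,\|\bm{h}_{\mathcal{S}}\|_q^p\le c\,\|\bm{h}\|_q^p$. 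Hence
\[
\|\bm{x}_0+\bm{h}\|_p^p\ \ge\ \|\bm{x}_0\|_p^p+\|\bm{h}\|_p^p-2c\|\bm{h}\|_q^p .
\]
The same Hölder bound applied to $\bm{x}_0$ itself yields $r\le c$.

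Next I will bound the denominator from above. The triangle inequality for $\|\cdot\|_q$ (valid since $q>1$) together with subadditivity of $t\mapsto t^p$ for $p\le 1$ gives
\[
\|\bm{x}_0+\bm{h}\|_q^p\le(\|\bm{x}_0\|_q+\|\bm{h}\|_q)^p\le\|\bm{x}_0\|_q^p+\|\bm{h}\|_q^p .
\]
The denominator is positive because $\bm{x}\neq\bm{0}$ in the admissible set. It then suffices to show
\[
\|\bm{x}_0\|_p^p+\|\bm{h}\|_p^p-2c\|\bm{h}\|_q^p\ >\ r\bigl(\|\bm{x}_0\|_q^p+\|\bm{h}\|_q^p\bigr)=\|\bm{x}_0\|_p^p+r\|\bm{h}\|_q^p .
\]
This is equivalent to $\|\bm{h}\|_p^p>(2c+r)\|\bm{h}\|_q^p$.

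Since $2c+r\le 3c=3s^{1-p/q}$, this last inequality follows from \eqref{eq:null_space_cond} raised to the power $p$, namely $\|\bm{h}\|_p^p>3s^{1-p/q}\|\bm{h}\|_q^p$. This proves the strict inequality between the two ratios for every nonzero $\bm{h}\in\ker(\bm{A})$, so $\bm{x}_0$ is the unique global minimizer of $\mathcal{Q}_{p,q}$.

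The only delicate point is arranging the three losses — the two copies of $\|\bm{h}_{\mathcal{S}}\|_p^p$ and the denominator term $r\|\bm{h}\|_q^p$ — so that each is bounded by $s^{1-p/q}\|\bm{h}\|_q^p$. This is exactly what produces the constant $3$. In particular, the bound $r\le s^{1-p/q}$ must be used rather than any signal-dependent quantity.
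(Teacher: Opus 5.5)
Your proposal is correct and follows essentially the same route as the paper. Both use the subadditive lower bound on the numerator, the triangle-plus-subadditivity upper bound on the denominator, and the Hölder bounds $\|\bm{h}_{\mathcal{S}}\|_p^p\le s^{1-p/q}\|\bm{h}\|_q^p$ and $r\le s^{1-p/q}$. Your direct cross-multiplication to $\|\bm{h}\|_p^p>(2c+r)\|\bm{h}\|_q^p$ is just an unpacked version of the paper's mediant-of-fractions step $\frac{A_0+T}{B_0+E}>\frac{A_0}{B_0}$.
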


\begin{proof}
	Let $\bm{x}_0$ be an arbitrary nonzero $s$-sparse vector. Set $\mathcal{S}:=\operatorname{supp}(\bm{x}_0)$ and $|\mathcal{S}|\le s$.
	To prove exact recovery, it suffices to show that for every nonzero 
	$\bm{h}\in\ker(\bm{A})$, one has
$$
		\frac{\|\bm{x}_0\|_p^p}{\|\bm{x}_0\|_q^p} < \frac{\|\bm{x}_0+\bm{h}\|_p^p}{\|\bm{x}_0+\bm{h}\|_q^p}.
$$
	Indeed, every feasible point distinct from $\bm{x}_0$ can be written as $\bm{x}_0+\bm{h}$ for some nonzero $\bm{h}\in\ker(\bm{A})$. Since $0<p\le 1$, the map $t\mapsto |t|^p$ is subadditive. It follows that $|a+b|^p\ge |a|^p-|b|^p$. Splitting the perturbation over $\mathcal{S}$ and $\mathcal{S}^c$ gives
	\begin{equation}\label{eq:num_bound}
		\|\bm{x}_0 + \bm{h}\|_p^p = \|(\bm{x}_0)_{\mathcal{S}} + \bm{h}_{\mathcal{S}}\|_p^p + \|\bm{h}_{\mathcal{S}^c}\|_p^p \ge \|\bm{x}_0\|_p^p - \|\bm{h}_{\mathcal{S}}\|_p^p + \|\bm{h}_{\mathcal{S}^c}\|_p^p. 
	\end{equation}
	Moreover, since $q>1$, the $\ell_q$ norm satisfies the triangle inequality. Using again the subadditivity of $t\mapsto t^p$ on $[0,\infty)$, one has
	\begin{equation}\label{eq:den_bound}
		\|\bm{x}_0+\bm{h}\|_q^p \le \left(\|\bm{x}_0\|_q+\|\bm{h}\|_q\right)^p \le \|\bm{x}_0\|_q^p+\|\bm{h}\|_q^p.
	\end{equation}
	For brevity, define $A_0:=\|\bm{x}_0\|_p^p$, $B_0:=\|\bm{x}_0\|_q^p$,	$E:=\|\bm{h}\|_q^p$, and $T:=\|\bm{h}_{\mathcal{S}^c}\|_p^p - \|\bm{h}_{\mathcal{S}}\|_p^p = \|\bm{h}\|_p^p - 2\|\bm{h}_{\mathcal{S}}\|_p^p$.	From \eqref{eq:num_bound} and \eqref{eq:den_bound}, we have
	\begin{equation}
		\frac{\|\bm{x}_0+\bm{h}\|_p^p}
		{\|\bm{x}_0+\bm{h}\|_q^p}
		\ge
		\frac{A_0+T}{B_0+E}.
		\label{eq:ratio_lower_bound}
	\end{equation}
	
	We now estimate $T/E$. By H\"older's inequality, for any $s$-sparse vector $\bm{v}$ and $p < q$, we have $\|\bm{v}\|_p^p \le s^{1-p/q}\|\bm{v}\|_q^p$. Applying this inequality to $\bm{x}_0$ yields
	\begin{equation}\label{eq:x0_bound}
		\frac{A_0}{B_0} = \frac{\|\bm{x}_0\|_p^p}{\|\bm{x}_0\|_q^p} \le s^{1-p/q}.
	\end{equation}
	It follows from applying inequality \eqref{eq:x0_bound} to the $s$-sparse perturbation vector $\bm{h}_{\mathcal{S}}$ that $\|\bm{h}_{\mathcal{S}}\|_p^p \le s^{1-p/q}\|\bm{h}_{\mathcal{S}}\|_q^p \le s^{1-p/q}\|\bm{h}\|_q^p$.
	On the other hand, the assumption condition \eqref{eq:null_space_cond} implies $\|\bm{h}\|_p^p > 3s^{1-p/q}\|\bm{h}\|_q^p$. Therefore, one has
	\begin{equation}\label{eq:T_bound}
		T = \|\bm{h}\|_p^p - 2\|\bm{h}_{\mathcal{S}}\|_p^p > 3s^{1-p/q}\|\bm{h}\|_q^p - 2s^{1-p/q}\|\bm{h}\|_q^p = s^{1-p/q}\|\bm{h}\|_q^p.
	\end{equation}
	Combining \eqref{eq:T_bound} with \eqref{eq:x0_bound}, we obtain $\frac{T}{E} > s^{1-p/q} \ge \frac{A_0}{B_0}$. Since $B_0>0$ and $E>0$, it follows from the standard mediant property of fractions that $\frac{A_0+T}{B_0+E}>\frac{A_0}{B_0}$. Together with \eqref{eq:ratio_lower_bound}, we conclude that
$$
		\frac{\|\bm{x}_0+\bm{h}\|_p^p}{\|\bm{x}_0+\bm{h}\|_q^p} > \frac{\|\bm{x}_0\|_p^p} {\|\bm{x}_0\|_q^p}.
$$
	Thus $\bm{x}_0$ is the unique global minimizer of $\mathcal{Q}_{p,q}$. 
	Since $\bm{x}_0$ was arbitrary, the result holds uniformly for all nonzero $s$-sparse signals.
\end{proof}

\begin{remark}[]
	Theorem~\ref{thm:exact_recovery} recovers, as a special case, the uniform recoverability condition for the $\ell_r/\ell_2$ minimization studied by Xu et al. \cite{xu2021analysis}. Indeed, for $0<r\le 1$, minimizing $\frac{\|\bm{x}\|_r}{\|\bm{x}\|_2}$ is equivalent to minimizing $(\frac{\|\bm{x}\|_r}{\|\bm{x}\|_2})^r = \frac{\|\bm{x}\|_r^r}{\|\bm{x}\|_2^r}$. Taking $p=r$ and $q=2$ in \eqref{eq:null_space_cond} gives
$
		\inf_{\bm{h}\in\ker(\bm{A})\setminus\{\bm{0}\}} \frac{\|\bm{h}\|_r}{\|\bm{h}\|_2} > 3^{1/r}s^{1/r-1/2},
$
	which coincides with the sufficient condition in Xu et al. \cite[Theorem 4.1]{xu2021analysis}. Thus, Theorem \ref{thm:exact_recovery} extends that uniform exact recovery condition from the $\ell_r/\ell_2$ ratio to the general $\ell_p^p/\ell_q^p$ ratio. This reduction demonstrates consistency with the existing result; it should not be interpreted as a sharpness claim for the constant in \eqref{eq:null_space_cond}.
\end{remark}

\subsection{Sample Complexity Bounds for the Uniform Null Space Condition}

The uniform exact recovery criterion in Theorem~\ref{thm:exact_recovery} is expressed as a geometric condition \eqref{eq:null_space_cond} on the null space of the measurement matrix. To convert this condition into a probabilistic sampling guarantee, we next estimate the number of measurements required for random sub-Gaussian matrices. Since the Gaussian-width analysis of nonconvex $\ell_p$ balls with $0<p<1$ is substantially more delicate, we restrict this subsection to the convex numerator case $p=1$. In this case, the null space condition in Theorem~\ref{thm:exact_recovery} becomes
\begin{equation}\label{eq:target_ratio_pure}
	\inf_{\bm{h}\in\ker(\bm{A})\setminus\{\bm{0}\}}
	\frac{\|\bm{h}\|_1}{\|\bm{h}\|_q}
	> 3s^{1-1/q}.
\end{equation}

\begin{theorem}[]\label{thm:sample_complexity_q_ge_2}
	Let $2\le q<\infty$, $s\in\mathbb{N}$, $u\ge 1$, and $n\ge 2$. Let $\bm{A}\in\mathbb{R}^{m\times n}$ be a random matrix whose rows are independent, isotropic, sub-Gaussian random vectors with sub-Gaussian norm bounded by $F$. Then there exists an absolute constant $D>0$ such that, if
	\begin{equation}\label{eq:sample_q_ge_2}
		m>D F^4 u^2 s^{2-2/q}\log(2n),
	\end{equation}
	then \eqref{eq:target_ratio_pure} holds with probability at least $1-2e^{-u^2}$. 
\end{theorem}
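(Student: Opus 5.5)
We will prove the statement through a Gaussian-width / escape-through-the-mesh argument for sub-Gaussian matrices, the route taken by Xu et al.\ for $\ell_1/\ell_2$. The idea is to show that $\ker(\bm{A})$ misses the set of vectors with a small $\ell_1/\ell_q$ ratio. Set $\alpha:=3s^{1-1/q}$ and define the cone
\[
\mathcal{C}:=\{\bm{h}\in\mathbb{R}^n:\|\bm{h}\|_1\le \alpha\|\bm{h}\|_q\}.
\]
Condition \eqref{eq:target_ratio_pure} fails exactly when $\ker(\bm{A})\cap\mathcal{C}\neq\{\bm{0}\}$, so it suffices to show that $\inf_{\bm{h}\in\mathcal{C}\cap\mathbb{S}^{n-1}}\|\bm{A}\bm{h}\|_2>0$ with the stated probability. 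For this we invoke the matrix deviation inequality for matrices with independent isotropic sub-Gaussian rows (Liaw--Mehrabian--Plan--Vershynin). For any $T\subset\mathbb{S}^{n-1}$, with probability at least $1-2e^{-u^2}$,
\[
\sup_{\bm{h}\in T}\bigl|\|\bm{A}\bm{h}\|_2-\sqrt m\bigr|\le CF^2\bigl(w(T)+u\bigr),
\]
where $w(T)=\mathbb{E}\sup_{\bm{h}\in T}\langle\bm{g},\bm{h}\rangle$ is the Gaussian width. It therefore suffices to have $\sqrt m> CF^2(w(T)+u)$ with $T=\mathcal{C}\cap\mathbb{S}^{n-1}$.

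The central step is to bound $w(T)$. Since $q\ge2$, we have $\|\bm{h}\|_q\le\|\bm{h}\|_2=1$ on $T$, and hence $\|\bm{h}\|_1\le\alpha$ on $T$. Using Hölder's inequality,
\[
\langle \bm{g},\bm{h}\rangle\le\|\bm{g}\|_\infty\|\bm{h}\|_1\le \alpha\|\bm{g}\|_\infty,
\]
and $\mathbb{E}\|\bm{g}\|_\infty\le\sqrt{2\log(2n)}$. This gives
\[
w(T)\le 3\sqrt2\, s^{1-1/q}\sqrt{\log(2n)}.
\]
Substituting, $\sqrt m>CF^2\bigl(3\sqrt2\,s^{1-1/q}\sqrt{\log(2n)}+u\bigr)$ is implied by
\[
m> D F^4u^2 s^{2-2/q}\log(2n)
\]
for a suitable absolute constant $D$. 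Here we use $u\ge1$, $n\ge2$ (so $\log(2n)\ge\log4>0$) and $s^{1-1/q}\ge1$, which allow both terms to be absorbed into the product $u\,s^{1-1/q}\sqrt{\log(2n)}$. We also use that $F$ is bounded below by an absolute constant for isotropic vectors, so that $u\le cF^2u$.

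Under this condition, every $\bm{h}\in T$ satisfies $\|\bm{A}\bm{h}\|_2\ge\sqrt m-CF^2(w(T)+u)>0$, so no unit vector of $\mathcal{C}$ lies in $\ker(\bm{A})$. It follows that every nonzero $\bm{h}\in\ker(\bm{A})$ satisfies $\|\bm{h}\|_1>\alpha\|\bm{h}\|_q$. The infimum in \eqref{eq:target_ratio_pure} must be strictly larger than $\alpha$, not merely $\ge\alpha$. This follows by compactness: the ratio is scale invariant, and on the compact set $\ker(\bm{A})\cap\mathbb{S}^{n-1}$ it is continuous and pointwise $>\alpha$, so its minimum is attained and exceeds $\alpha$. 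If $\ker(\bm{A})=\{\bm{0}\}$, the condition holds vacuously.

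The main obstacle is citing the correct form of the matrix deviation inequality, namely its $F^2$ dependence and its $1-2e^{-u^2}$ tail, which must match the statement's $F^4$ and $u^2$. The crude width bound is the point where the restriction $q\ge2$ is used: it gives $\|\bm{h}\|_q\le\|\bm{h}\|_2$. For $q<2$ this fails, and a different estimate would be needed.
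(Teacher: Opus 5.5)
Your proof is correct, and it uses the same ingredients as the paper's proof: the matrix deviation inequality, $\mathbb{E}\|\bm{g}\|_\infty\le\sqrt{2\log(2n)}$, and $\|\bm{h}\|_q\le\|\bm{h}\|_2$ for $q\ge 2$. The difference is the set to which you apply the deviation inequality.

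The paper applies it once to $B_1^n$. On that single event it obtains the uniform bound $\|\bm{h}\|_1/\|\bm{h}\|_2\ge \sqrt{m}/(C_1F^2u\sqrt{\log(2n)})$ for every nonzero $\bm{h}\in\ker(\bm{A})$. The strict inequality in \eqref{eq:target_ratio_pure} then follows directly from this quantitative bound, no compactness argument is needed, and the event is the same for all $s$ and $q$.

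You instead apply it to $T=\mathcal{C}\cap\mathbb{S}^{n-1}$, using $q\ge 2$ only to embed $T$ in $3s^{1-1/q}B_1^n$, and conclude by escape through the mesh plus compactness. Localizing to the cone puts $s^{1-1/q}$ into the width but not the radius. Before simplification you need only $\sqrt{m}>CF^2\bigl(3\sqrt{2}\,s^{1-1/q}\sqrt{\log(2n)}+u\bigr)$, i.e.\ $m\gtrsim F^4\bigl(s^{2-2/q}\log(2n)+u^2\bigr)$. The paper's route needs $\sqrt{m}>3s^{1-1/q}CF^2\bigl(\sqrt{2\log(2n)}+u\bigr)$, where $u^2$ is multiplied by $s^{2-2/q}$. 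So your route gives a sharper intermediate bound, with the confidence term additive, and \eqref{eq:sample_q_ge_2} is a weakening of it.

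Your remark that $F$ is bounded below by an absolute constant is not needed, because the deviation inequality already multiplies the $u$ term by $F^2$. It is true anyway: isotropy forces $F\ge 1$.
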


\begin{proof}
	Let $B_1^n:=\{\bm{x}\in\mathbb{R}^n:\|\bm{x}\|_1\le 1\}$. We apply the matrix deviation inequality to the deterministic set $B_1^n$. With probability at least $1-2e^{-u^2}$, one has
$$ 
		\sup_{\bm{x}\in B_1^n} \left| \|\bm{A}\bm{x}\|_2-\sqrt{m}\|\bm{x}\|_2 \right| \le C F^2\left(\omega(B_1^n)+u\,\operatorname{rad}(B_1^n)\right),
$$
	where $C>0$ is an absolute constant, $\omega(\cdot)$ denotes Gaussian width, and $\operatorname{rad}(\cdot)$ denotes Euclidean radius. Since
$$
		\omega(B_1^n)=\mathbb{E}\|\bm{g}\|_\infty\le \sqrt{2\log(2n)}, \quad \operatorname{rad}(B_1^n)\le 1,
$$
	with $\bm{g}\sim\mathcal{N}(\bm{0},\bm{I}_n)$, it follows from $u\ge 1$ and $n\ge 2$ that
	\begin{equation}\label{eq:mdi_b1_simplified}
		\sup_{\bm{x}\in B_1^n}
		\left|
		\|\bm{A}\bm{x}\|_2-\sqrt{m}\|\bm{x}\|_2
		\right|
		\le C_1 F^2 u\sqrt{\log(2n)}
	\end{equation}
	for another absolute constant $C_1>0$. Now take any nonzero $\bm{h}\in\ker(\bm{A})$ and set $\bm{z}:=\frac{\bm{h}}{\|\bm{h}\|_1}$. Then $\bm{z}\in B_1^n$ and $\bm{A}\bm{z}=\bm{0}$. Substituting $\bm{z}$ into \eqref{eq:mdi_b1_simplified} gives $\sqrt{m}\|\bm{z}\|_2 \le C_1F^2u\sqrt{\log(2n)}$. Equivalently, we have
$$
		\frac{\|\bm{h}\|_1}{\|\bm{h}\|_2} = \frac{1}{\|\bm{z}\|_2} \ge \frac{\sqrt{m}}{C_1F^2u\sqrt{\log(2n)}}.
$$
	For $q\ge 2$, the norm monotonicity relation $\|\bm{h}\|_q\le \|\bm{h}\|_2$ yields
$$
		\frac{\|\bm{h}\|_1}{\|\bm{h}\|_q} \ge \frac{\|\bm{h}\|_1}{\|\bm{h}\|_2} \ge \frac{\sqrt{m}}{C_1F^2u\sqrt{\log(2n)}}.
$$
	Therefore \eqref{eq:target_ratio_pure} holds whenever
$$
		\frac{\sqrt{m}}{C_1F^2u\sqrt{\log(2n)}} > 3s^{1-1/q},
$$
	which is guaranteed by \eqref{eq:sample_q_ge_2} after choosing $D=9C_1^2$. The exact recovery assertion then follows directly from Theorem~\ref{thm:exact_recovery}.
\end{proof}

\begin{remark}
	When $q=2$, Theorem~\ref{thm:sample_complexity_q_ge_2} gives the sufficient sampling rate $m \gtrsim F^4u^2s\log(2n)$, which has the same sparsity dependence as the standard sub-Gaussian sampling bound for the $\ell_1/\ell_2$ model. As $q$ increases, the exponent $2-2/q$ increases and approaches $2$ in the limiting $\ell_1/\ell_\infty$ regime. These estimates are sufficient conditions derived from the lower bound on the null-space $\ell_1/\ell_2$ ratio, and should not be interpreted as sharp phase transition thresholds.
\end{remark}

For the range $1<q<2$, the preceding argument is no longer effective, since $\|\bm{h}\|_q\le\|\bm{h}\|_2$ fails in general. A sharper estimate can be obtained by combining the RIP-induced rNSP with Stechkin's inequality.
\begin{theorem}[Sample complexity for $1<q<2$]
	\label{thm:sample_complexity_q_lt_2}
	Let $1<q<2$, $s\in\mathbb{N}$, $u\ge 1$, and $n\ge 2$. Let
	$\bm{A}\in\mathbb{R}^{m\times n}$ be a random matrix whose rows are independent,
	isotropic, sub-Gaussian random vectors, and suppose that their sub-Gaussian norms
	are uniformly bounded by $F$. Then there exist a constant $C_q>0$, depending only
	on $q$, and absolute constants $D_0,D_1>0$ such that, if $C_qs\le n$ and
	\begin{equation}\label{eq:sample_q_lt_2}
		m\ge
		D_0F^4 C_qs\log\left(\frac{en}{s}\right)
		+
		D_1F^4u^2,
	\end{equation}
	then \eqref{eq:target_ratio_pure} holds with probability at least $1-2e^{-u^2}$.
\end{theorem}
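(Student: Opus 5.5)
The plan follows the hint in the text: use the RIP of sub-Gaussian matrices to get an $\ell_2$ robust null space property of order $t:=C_qs$ (with $\bm A\bm h=\bm 0$, so only the stable part matters). Then use Stechkin-type estimates to turn that into a lower bound on $\|\bm h\|_1/\|\bm h\|_q$ on $\ker(\bm A)$ of order $t^{1-1/q}$. Finally, choose $C_q$ so that this exceeds $3s^{1-1/q}$.

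\textbf{Step 1 (RIP).} By the standard sub-Gaussian RIP theorem (for instance via the matrix deviation inequality applied to the set of unit $2t$-sparse vectors, whose Gaussian width is $\lesssim\sqrt{t\log(en/t)}$), the normalized matrix $m^{-1/2}\bm A$ satisfies $\delta_{2t}\le 1/4$ with probability at least $1-2e^{-u^2}$ provided $m\ge D_0F^4 t\log(en/t)+D_1F^4u^2$. Since $t\ge s$, we have $t\log(en/t)\le t\log(en/s)$, which gives \eqref{eq:sample_q_lt_2}. The hypothesis $C_qs\le n$ makes $t$ an admissible sparsity level. Rescaling does not change $\ker(\bm A)$.

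\textbf{Step 2 (NSP).} It is classical (Foucart--Rauhut) that $\delta_{2t}<4/\sqrt{41}$ implies the $\ell_2$-rNSP of order $t$ with constants $\rho\in(0,1)$ absolute. For $\bm h\in\ker(\bm A)$ and $\mathcal T$ the index set of the $t$ largest entries of $\bm h$, this gives
\begin{equation*}
\|\bm h_{\mathcal T}\|_2\le \frac{\rho}{\sqrt t}\|\bm h_{\mathcal T^c}\|_1 .
\end{equation*}

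\textbf{Step 3 (interpolation to $\ell_q$).} We bound $\|\bm h\|_q\le\|\bm h_{\mathcal T}\|_q+\|\bm h_{\mathcal T^c}\|_q$ term by term.
\begin{itemize}
\item For the head, Hölder gives $\|\bm h_{\mathcal T}\|_q\le t^{1/q-1/2}\|\bm h_{\mathcal T}\|_2\le \rho\, t^{1/q-1}\|\bm h\|_1$.
\item For the tail, Stechkin's inequality $\sigma_t(\bm h)_q\le t^{1/q-1}\|\bm h\|_1$ gives $\|\bm h_{\mathcal T^c}\|_q\le t^{1/q-1}\|\bm h\|_1$.
\end{itemize}
Together these yield
\begin{equation*}
\frac{\|\bm h\|_1}{\|\bm h\|_q}\ge \frac{t^{1-1/q}}{1+\rho}
\end{equation*}
for every nonzero $\bm h\in\ker(\bm A)$.

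\textbf{Step 4 (choice of $C_q$).} With $t=C_qs$ the bound reads $C_q^{1-1/q}s^{1-1/q}/(1+\rho)$. Taking
\begin{equation*}
C_q:=\Bigl\lceil\bigl(3(1+\rho)\bigr)^{q/(q-1)}\Bigr\rceil+1
\end{equation*}
makes it strictly larger than $3s^{1-1/q}$, which is \eqref{eq:target_ratio_pure}. Since $\rho<1$, this $C_q\le 6^{q/(q-1)}+2$ depends only on $q$, as the statement requires.

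The main obstacle is Step 1. We must make the RIP probability statement come out in exactly the additive form $D_0F^4t\log(en/s)+D_1F^4u^2$ with failure probability $2e^{-u^2}$. The first thing to try is the matrix deviation inequality on the set $\Sigma_{2t}\cap S^{n-1}$. It gives
\begin{equation*}
\sup\bigl|\|\bm A\bm x\|_2-\sqrt m\bigr|\le CF^2\bigl(\sqrt{2t\log(en/2t)}+u\bigr),
\end{equation*}
so requiring the right-hand side to be at most $\sqrt m/10$ yields $\delta_{2t}\le 1/4$ after squaring, up to constants. It is also worth noting that $C_q$ blows up as $q\downarrow1$, which is expected.
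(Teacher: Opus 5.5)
Your proposal is correct and follows essentially the same route as the paper: RIP for $m^{-1/2}\bm A$, the Foucart--Rauhut $\ell_2$-robust NSP, H\"older on the head plus Stechkin on the tail, and a choice of the auxiliary order that makes $t^{1-1/q}/(1+\rho)>3s^{1-1/q}$. The differences are cosmetic: you use the crude Stechkin constant $1$ instead of $c_{1,q}$, you fix $\delta_{2t}\le 1/4$ rather than $\delta_{2k}<4/\sqrt{41}$, and with $t=C_qs$ your RIP order $2t$ can exceed $n$ under the hypothesis $C_qs\le n$ (harmless, since the width of $S^{n-1}$ is at most $\sqrt{n}\le\sqrt{2t}$), whereas the paper defines $C_q$ so that the RIP order itself satisfies $2k\le C_qs\le n$.
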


\begin{proof}
	Define $c_{1,q}:=\left(1/q\right)^{1/q}\left(1-1/q\right)^{1-1/q} \leq 1$. Fix $\delta:=\frac{4}{\sqrt{41}}$. By the standard result that the restricted isometry property (RIP) implies the $\ell_2$-robust NSP, see Theorem~6.13 in \cite{foucart2013mathematical}, there exist constants $\rho\in(0,1)$ and $\tau>0$, depending only on the fixed value of $\delta$, such that any matrix with $\delta_{2k}<\delta$ satisfies the $\ell_2$-robust NSP of order $k$ with constants $\rho$ and $\tau$. Set $\alpha_q:=\left[3(\rho+c_{1,q})\right]^{q/(q-1)}$ and choose $k:=\lceil \alpha_q s\rceil+1$. Then, one has $k > \alpha_q s$ and $2k\le 2(\alpha_qs+2)\le 2(\alpha_q+2)s$. Let $C_q:=2(\alpha_q+2)$. Hence $2k\le C_qs$. By the assumption $C_qs\le n$, we have $2k\le n$. Let $\bm{\Phi} := \frac{1}{\sqrt{m}} \bm{A}$ be the normalized measurement matrix and define the event $\mathcal E:=\{\delta_{2k}(\bm{\Phi})<\delta\}$. By the standard RIP estimate for isotropic sub-Gaussian random matrices, see Theorem~9.2 in \cite{foucart2013mathematical}, there exist absolute constants $C,c>0$ such that $\mathbb P(\mathcal E)\ge 1-2e^{-u^2}$ provided that
	\begin{equation}\label{eq:rip_sample_bound_q_lt_2}
		m \ge CF^4\delta^{-2}(2k)\log(\frac{en}{2k})	+ cF^4\delta^{-2}u^2.
	\end{equation}
	Since the function $t \mapsto t\log\left(\frac{en}{t}\right)$ is nondecreasing on $(0,n]$, it follows from $2k\le C_qs\le n$ that
	\begin{equation}\label{eq:monotonicity_q_lt_2}
		(2k)\log(\frac{en}{2k})
		\le
		C_qs\log(\frac{en}{C_qs})
		\le
		C_qs\log(\frac{en}{s}).
	\end{equation}
	Thus, by choosing $D_0:=C\delta^{-2}$ and $D_1:=c\delta^{-2}$, condition \eqref{eq:sample_q_lt_2}, together with \eqref{eq:monotonicity_q_lt_2}, implies \eqref{eq:rip_sample_bound_q_lt_2}. It follows that $ \mathbb P(\mathcal E)\ge 1-2e^{-u^2}$.
	
	We now work on the event $\mathcal E$. Then $\delta_{2k}(\bm{\Phi})<\delta$, and therefore $\bm{\Phi}$ satisfies
	the $\ell_2$-robust NSP of order $k$. Namely, for every index set $\mathcal{S}\subset[n]$ with $\operatorname{card}(\mathcal{S})\le k$ and every $\bm v\in\mathbb R^n$, one has
	\begin{equation}\label{eq:l2_rnsp_used}
		\|\bm v_{\mathcal{S}}\|_2 \le \frac{\rho}{\sqrt{k}}\|\bm v_{\mathcal{S}^c}\|_1 + \tau\|\bm{\Phi}\bm v\|_2 .
	\end{equation}
	Let $\bm{h}\in\ker(\bm{A})\setminus\{\bm{0}\}$. Since $\ker(\bm{A})=\ker(\bm{\Phi})$, we have $\bm{\Phi}\bm{h}=\bm{0}$. Let $\mathcal{S}_0$ be the index set of the $k$ largest entries of $\bm{h}$ in absolute value. Applying \eqref{eq:l2_rnsp_used} to $\bm{v}=\bm{h}$ and $\mathcal{S}=\mathcal{S}_0$, we get $\|\bm{h}_{\mathcal{S}_0}\|_2 \le \frac{\rho}{\sqrt{k}}\|\bm{h}_{\mathcal{S}_0^c}\|_1$. Since $1<q<2$, H\"older's inequality gives
	\begin{equation}\label{eq:head_q_lt_2_correct}
		\|\bm{h}_{\mathcal{S}_0}\|_q \le k^{1/q-1/2}\|\bm{h}_{\mathcal{S}_0}\|_2 \le \rho k^{1/q-1}\|\bm{h}_{\mathcal{S}_0^c}\|_1 \le \rho k^{1/q-1}\|\bm{h}\|_1 .
	\end{equation}
	On the other hand, by Stechkin's inequality, see Theorem~2.5 in \cite{foucart2013mathematical}, we have
	\begin{equation}\label{eq:tail_q_lt_2_correct}
		\|\bm{h}_{\mathcal{S}_0^c}\|_q = \sigma_k(\bm{h})_q \le c_{1,q} k^{1/q-1}\|\bm{h}\|_1 .
	\end{equation}
	Combining \eqref{eq:head_q_lt_2_correct} and \eqref{eq:tail_q_lt_2_correct}, we obtain
$$
		\|\bm{h}\|_q \le \|\bm{h}_{\mathcal{S}_0}\|_q+\|\bm{h}_{\mathcal{S}_0^c}\|_q \le (\rho+c_{1,q})k^{1/q-1}\|\bm{h}\|_1 .
$$
	It follows that
	\begin{equation}\label{eq:ratio_q_lt_2_correct}
		\frac{\|\bm{h}\|_1}{\|\bm{h}\|_q} \ge \frac{1}{\rho+c_{1,q}}k^{1-1/q}.
	\end{equation}
	Since $k>\alpha_qs$ and $1-1/q>0$, taking the power yields $k^{1-1/q} > \alpha_q^{1-1/q}s^{1-1/q} = 3(\rho+c_{1,q})s^{1-1/q}$. Substituting this into \eqref{eq:ratio_q_lt_2_correct} yields $\frac{\|\bm{h}\|_1}{\|\bm{h}\|_q} > 3s^{1-1/q}$. Because $\bm{h}\in\ker(\bm{A})\setminus\{\bm{0}\}$ was arbitrary, it follows that
$$
		\inf_{\bm{h}\in\ker(\bm{A})\setminus\{\bm{0}\}} \frac{\|\bm{h}\|_1}{\|\bm{h}\|_q} > 3s^{1-1/q}.
$$
	This proves \eqref{eq:target_ratio_pure}. The exact recovery conclusion then follows from Theorem~\ref{thm:exact_recovery}.
\end{proof}

\begin{remark}
	Theorem~\ref{thm:sample_complexity_q_lt_2} gives a sufficient sampling rate of order $m\gtrsim F^4 C_q s\log\left(\frac{en}{s}\right)+F^4u^2$
	for each fixed $1<q<2$. The constant $C_q$ depends on $q$ through the auxiliary sparsity level used in the robust-NSP argument. Together with Theorem~\ref{thm:sample_complexity_q_ge_2}, this provides probabilistic sufficient conditions for the null space criterion \eqref{eq:target_ratio_pure} in the full range $q>1$, but the stated bounds should be understood as sufficient recovery guarantees rather than sharp optimal thresholds.
\end{remark}

\section{Stable recovery guarantee with $k$-sparse Assumption}\label{sec:ReDCENprop}
This section presents the main result under the $k$-sparsity assumption, together with a corollary for the special case $p = 1$, $q > 1$. The improvements over the prior work \cite[Theorem~1]{zhu2026stable} are also discussed.
\subsection{RIP Analysis for $\mathcal{Q}_{p,q}^{\delta}$-Minimization}

We first list two lemmas and a corollary that will assist the proof of Theorem \ref{thm:RIPstablerecovery}. For convenience, the compressed sensing measurement model described in Section \ref{sec:introduction} is denoted by $\mathcal{M}\{\bm{A}, \bm{x}, \bm{b}, m, n, \epsilon\}.$
\begin{lemma}[{\cite[Lemma 1]{zhu2026stable}}]\label{lem:lemma1}
	Assume that $\bm{x}\neq\bm{0}$ belongs to $\{\bm{z} \mid \|\bm{b} - \bm{A}\bm{z}\|_2 \leq \epsilon\}$, is $k$-sparse, and satisfies $\frac{\|\bm{x}\|_p}{\|\bm{x}\|_q} \leq \beta$ with $0 < p \leq 1$, $q > 1$ and $\beta > 0$. Let $\hat{\bm{x}}$ be a solution to $\mathcal{Q}_{p,q}^{\epsilon}$, and set $\bm{h} := \hat{\bm{x}} - \bm{x}$. Then, we have
	\begin{equation}\label{eq:lem1ieq}
		\|\bm{h}_{-\max(k)}\|_p^p \leq (\beta^p + k^{\frac{q-p}{q}})\|\bm{h}_{\max(k)}\|_q^p + \beta^p\|\bm{h}_{-\max(k)}\|_q^p. 
	\end{equation}
\end{lemma}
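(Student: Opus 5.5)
Let $T:=\operatorname{supp}(\bm{h}_{\max(k)})$ denote the indices of the $k$ largest entries of $\bm{h}$ in magnitude, and let $\mathcal{S}:=\operatorname{supp}(\bm{x})$, so that $\operatorname{card}(\mathcal{S})\le k$. The proof combines optimality of $\hat{\bm{x}}$ with the same subadditivity estimates used in the proof of Theorem~\ref{thm:exact_recovery}. Since $\bm{x}$ is feasible for $\mathcal{Q}_{p,q}^{\epsilon}$, minimality of $\hat{\bm{x}}=\bm{x}+\bm{h}$ gives
\begin{equation*}
\|\bm{x}+\bm{h}\|_p^p \le \frac{\|\bm{x}\|_p^p}{\|\bm{x}\|_q^p}\,\|\bm{x}+\bm{h}\|_q^p \le \beta^p\|\bm{x}+\bm{h}\|_q^p .
\end{equation*}
If $\hat{\bm{x}}$ were zero it would be excluded from the feasible set, so the ratio is well defined.

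\emph{Numerator.} By subadditivity of $t\mapsto|t|^p$, and arguing exactly as in \eqref{eq:num_bound},
\begin{equation*}
\|\bm{x}+\bm{h}\|_p^p \ge \|\bm{x}\|_p^p-\|\bm{h}_{\mathcal{S}}\|_p^p+\|\bm{h}_{\mathcal{S}^c}\|_p^p .
\end{equation*}
Because $T$ collects the $k$ largest entries and $\operatorname{card}(\mathcal{S})\le k$, we have $\|\bm{h}_{\mathcal{S}^c}\|_p^p-\|\bm{h}_{\mathcal{S}}\|_p^p\ge \|\bm{h}_{T^c}\|_p^p-\|\bm{h}_{T}\|_p^p$. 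This holds since $\|\bm{h}_{\mathcal{S}}\|_p^p\le\|\bm{h}_T\|_p^p$ and $\|\bm{h}\|_p^p$ is fixed: writing $\|\bm{h}_{\mathcal{S}^c}\|_p^p-\|\bm{h}_{\mathcal{S}}\|_p^p=\|\bm{h}\|_p^p-2\|\bm{h}_{\mathcal{S}}\|_p^p$, the expression is smallest when $\mathcal{S}=T$.

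\emph{Denominator.} By the triangle inequality in $\ell_q$ (valid since $q>1$) and subadditivity of $t^p$,
\begin{equation*}
\|\bm{x}+\bm{h}\|_q^p\le \|\bm{x}\|_q^p+\|\bm{h}_T\|_q^p+\|\bm{h}_{T^c}\|_q^p .
\end{equation*}

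Substituting both estimates into the optimality inequality yields
\begin{equation*}
\|\bm{x}\|_p^p-\|\bm{h}_T\|_p^p+\|\bm{h}_{T^c}\|_p^p\le \beta^p\|\bm{x}\|_q^p+\beta^p\|\bm{h}_T\|_q^p+\beta^p\|\bm{h}_{T^c}\|_q^p .
\end{equation*}
There is a subtlety here. We used $\|\bm{x}\|_p^p/\|\bm{x}\|_q^p\le\beta^p$, and the term $\beta^p\|\bm{x}\|_q^p$ cannot be cancelled against $\|\bm{x}\|_p^p$ if $\beta$ is strictly bigger than the true ratio. To avoid this, I would keep $r:=\|\bm{x}\|_p^p/\|\bm{x}\|_q^p$ in the denominator multiplier until the end. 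With $r$ in place of $\beta^p$, the $\bm{x}$ terms cancel exactly: $\|\bm{x}\|_p^p-r\|\bm{x}\|_q^p=0$. Only after cancelling do we bound the remaining coefficient $r$ by $\beta^p$. This gives
\begin{equation*}
\|\bm{h}_{T^c}\|_p^p\le \|\bm{h}_T\|_p^p+\beta^p\|\bm{h}_T\|_q^p+\beta^p\|\bm{h}_{T^c}\|_q^p .
\end{equation*}

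Finally, H\"older's inequality on the $k$-sparse vector $\bm{h}_T$ gives $\|\bm{h}_T\|_p^p\le k^{1-p/q}\|\bm{h}_T\|_q^p=k^{\frac{q-p}{q}}\|\bm{h}_T\|_q^p$, which is \eqref{eq:lem1ieq}.

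The main obstacle is the cancellation step. One must use the exact ratio $r$ before invoking the bound by $\beta$. One should also check that replacing $\mathcal{S}$ by $T$ is legitimate. That replacement is handled by the identity $\|\bm{h}\|_p^p-2\|\bm{h}_{\mathcal{S}}\|_p^p$ together with the maximality of $T$.
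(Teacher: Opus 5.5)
Your proof is correct: starting from $\|\bm{x}+\bm{h}\|_p^p\le r\|\bm{x}+\bm{h}\|_q^p$ with the exact ratio $r=\|\bm{x}\|_p^p/\|\bm{x}\|_q^p$, you lower-bound the numerator as in \eqref{eq:num_bound} (with $\mathcal{S}$ replaced by $T$, justified by $\|\bm{h}_{\mathcal{S}}\|_p^p\le\|\bm{h}_T\|_p^p$). You upper-bound the denominator by $\|\bm{x}\|_q^p+\|\bm{h}_T\|_q^p+\|\bm{h}_{T^c}\|_q^p$, cancel $\|\bm{x}\|_p^p=r\|\bm{x}\|_q^p$, bound $r\le\beta^p$, and apply H\"older to $\bm{h}_T$; this yields exactly \eqref{eq:lem1ieq}, and you rightly keep $r$ rather than $\beta^p$ until after the cancellation. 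The paper only cites this lemma from \cite{zhu2026stable} without reproducing a proof, but your argument is the standard one, built from the same subadditivity estimates \eqref{eq:num_bound}--\eqref{eq:den_bound} used in the proof of Theorem~\ref{thm:exact_recovery}, and the same mechanism underlies Lemma~\ref{lem:zhu_tail_lp}.
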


\begin{lemma}[{\cite[Lemma 2]{zhu2026stable}}]\label{lem:lemma2}
	Let $f_{p,q}(z) = z^q - \beta^p k^{-\frac{q-p}{q}} z^p - \beta^p k^{-\frac{q-p}{q}} - 1$, where $0 < p \leq 1$, $q > 1$, $\beta > 0$ and $k \geq 1$ be an integer. Then, 
	\begin{enumerate}[{\rm(i)}]
 	\item  $f_{p,q}(z)$ is monotonically decreasing on $(0, z^*)$ and increasing on $(z^*, +\infty)$, where $z^* = k^{-1/q}\left(\beta^p p/q\right)^{1/(q-p)}$ is the stationary-point;
	\item $f_{p,q}(z)$ has a unique zero-point $z_0$ in $[0, +\infty)$ satisfying $\tilde{z} < z_0 < \bar{z}$, where $\tilde{z} := (kq)^{-\frac{1}{q}}(p\beta^p)^{\frac{1}{q-p}}$ and $\bar{z} := (1 + \beta^p k^{-\frac{q-p}{q}})^{\frac{2}{q-p}}$.
	\end{enumerate}
\end{lemma}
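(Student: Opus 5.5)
The proof is elementary single-variable calculus on $f:=f_{p,q}$, which is continuous on $[0,\infty)$ and differentiable on $(0,\infty)$. To lighten notation set $c:=\beta^p k^{-\frac{q-p}{q}}>0$, so that $f(z)=z^q-cz^p-c-1$.

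\textbf{Monotonicity (part (i)).} For $z>0$ we have
\[
f'(z)=qz^{q-1}-cpz^{p-1}=z^{p-1}\bigl(qz^{q-p}-cp\bigr).
\]
Since $q>1\ge p$, we have $q-p>0$, so $z\mapsto qz^{q-p}-cp$ is strictly increasing. It vanishes exactly at $z^*=(cp/q)^{1/(q-p)}$. Rewriting, $(cp/q)^{1/(q-p)}=k^{-1/q}(\beta^p p/q)^{1/(q-p)}$, because $c^{1/(q-p)}=\beta^{p/(q-p)}k^{-1/q}$. Hence $f'<0$ on $(0,z^*)$ and $f'>0$ on $(z^*,\infty)$, which is (i).

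\textbf{Existence and uniqueness of the zero (part (ii)).} We have $f(0)=-c-1<0$. By (i), $f$ decreases on $[0,z^*]$, so $f<0$ on all of $[0,z^*]$ and there is no zero there. On $[z^*,\infty)$, $f$ is strictly increasing and $f(z)\to+\infty$ as $z\to\infty$, since $z^q$ dominates $z^p$. The intermediate value theorem then gives exactly one zero $z_0>z^*$.

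\textbf{Bounds on $z_0$.} It remains to locate $z_0$ via sign checks.

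For the lower bound, note $\tilde z=(kq)^{-1/q}(p\beta^p)^{1/(q-p)}$. Comparing with $z^*=k^{-1/q}q^{-1/(q-p)}(p\beta^p)^{1/(q-p)}$, we get $\tilde z\le z^*$ exactly when $q^{-1/q}\le q^{-1/(q-p)}$. This holds because $q>1$ and $1/(q-p)\ge 1/q$. So $\tilde z\le z^*<z_0$, and strictness is automatic.

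For the upper bound, it suffices to show $f(\bar z)>0$ with $\bar z>z^*$, or simply $f(\bar z)>0$, since $f<0$ on $[0,z_0)$. Write $a:=1+c>1$, so $\bar z=a^{2/(q-p)}>1$. Then
\[
f(\bar z)=\bar z^p\bigl(\bar z^{q-p}-c\bigr)-a=\bar z^p(a^2-c)-a .
\]
Since $\bar z^p\ge1$ and $a^2-c=a^2-a+1$, we get
\[
f(\bar z)\ge a^2-2a+1=(a-1)^2=c^2>0 .
\]
Thus $z_0<\bar z$.

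The only step requiring care is this upper-bound estimate, specifically factoring out $\bar z^p$ so that the exponent $2/(q-p)$ produces $a^2$. The remaining steps are routine.
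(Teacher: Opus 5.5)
The paper does not prove this lemma (it is quoted from \cite{zhu2026stable}), so your argument has to stand on its own. Part (i), the existence/uniqueness argument, and the upper bound $f(\bar z)\ge (a-1)^2=c^2>0$ are all correct. The lower bound, however, fails as written, because the comparison between $\tilde z$ and $z^*$ goes the other way. Since $q>1$, the map $x\mapsto q^{-x}$ is decreasing, so $\frac{1}{q-p}>\frac1q$ gives $q^{-1/(q-p)}<q^{-1/q}$, not the reverse. Equivalently, $\tilde z/z^*=q^{\frac{1}{q-p}-\frac1q}=q^{\frac{p}{q(q-p)}}>1$. For instance, with $p=1$ and $q=2$ one has $z^*=\beta/(2\sqrt k)$ and $\tilde z=\beta/\sqrt{2k}=\sqrt2\,z^*$. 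Thus $\tilde z$ always lies strictly inside the increasing branch, and the chain $\tilde z\le z^*<z_0$ is unavailable.

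The repair is a direct sign check, in the same spirit as your upper bound. In your notation, $\tilde z^{\,q-p}=(kq)^{-\frac{q-p}{q}}\,p\beta^p=c\,p\,q^{-\frac{q-p}{q}}<c$, since $p\le 1$ and $q^{-(q-p)/q}<1$. Hence $f(\tilde z)=\tilde z^{\,p}\bigl(\tilde z^{\,q-p}-c\bigr)-c-1<-c-1<0$. You have already shown that $f<0$ on $[0,z_0)$ and $f>0$ on $(z_0,\infty)$, so this gives $\tilde z<z_0$. The same computation shows more: $f(z)\le -c-1$ whenever $z^{q-p}\le c$. Therefore $z_0>c^{1/(q-p)}=k^{-1/q}\beta^{p/(q-p)}$, which is a sharper lower bound than $\tilde z$.
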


We now turn to the stable recovery analysis under the additional assumption that the ground-truth signal is $k$-sparse. The preceding exact recovery result is formulated through a null space condition, whereas the following theorem provides a verifiable sufficient condition in terms of the restricted isometry constant of order $2k$. The proof refines the tail estimate for the recovery error by exploiting the zero point of the auxiliary function in Lemma~\ref{lem:lemma2}, and then combines this estimate with a sparse convex decomposition argument. This leads to a $2k$-RIP condition and an explicit stability bound for the solution of $\mathcal{Q}_{p,q}^{\epsilon}$.
\begin{theorem}\label{thm:RIPstablerecovery}
	Assume that $\bm{x} \neq \bm{0}$ is $k$-sparse and satisfies $\frac{\|\bm{x}\|_p}{\|\bm{x}\|_q}\leq \beta$ with $0<p\leq 1$, $q>1$, $\beta>0$. Let $\hat{\bm{x}}$ be a solution to $\mathcal{Q}_{p,q}^{\epsilon}$. The function $f_{p,q}(z)$ is defined as in Lemma \ref{lem:lemma2}. If $\bm{A}$ satisfies $2k$-RIP with RIC
	\begin{equation*}\label{eq:RIC}
		\delta_{2k}< \frac{1}{\sqrt{1 + k^{\frac{2}{\min(q,2)}-\frac{2q+2-2p}{q}}\Psi^2}},
	\end{equation*}
	where $\Psi := \beta^p(1+z_0^p) + k^{\frac{q-p}{q}}$ and $z_0$ is the unique zero-point of $f_{p,q}(z)$ in $[0,+\infty)$, then
	\begin{equation}\label{eq:RIPupper_general}
		\|\hat{\bm{x}}-\bm{x}\|_2 \leq
		\frac{2\varepsilon\sqrt{1+\delta_{2k}}(1+k^{\frac{1}{\min(q,2)}-\frac{2-p+q}{2q}}\sqrt{\Psi})}
		{1-\delta_{2k}\sqrt{1+k^{\frac{2}{\min(q,2)}-\frac{2q+2-2p}{q}}\Psi^2}}.
	\end{equation}
\end{theorem}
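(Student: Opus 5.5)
We will set $\bm{h}:=\hat{\bm{x}}-\bm{x}$, write $T_0$ for the support of $\bm{h}_{\max(k)}$, and argue in three stages: a tail estimate, a sparse decomposition of the tail, and an RIP-based closing argument.

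\textbf{Step 1 (tail bound in $\ell_p$).} Start from Lemma~\ref{lem:lemma1}. The aim is to replace the term $\beta^p\|\bm{h}_{-\max(k)}\|_q^p$ on the right of \eqref{eq:lem1ieq} by a multiple of $\|\bm{h}_{\max(k)}\|_q^p$. Put $z:=\|\bm{h}_{-\max(k)}\|_q/\|\bm{h}_{\max(k)}\|_q$ (if $\bm{h}_{\max(k)}=\bm{0}$ then $\bm{h}=\bm{0}$). Every tail entry is bounded by the smallest head entry, so $\|\bm{h}_{-\max(k)}\|_q^q\le \|\bm{h}_{-\max(k)}\|_p^p\,\|\bm{h}_{\max(k)}\|_\infty^{q-p}$ and $\|\bm{h}_{\max(k)}\|_\infty^{q-p}\le k^{-(q-p)/q}\|\bm{h}_{\max(k)}\|_q^{q-p}$. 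Feeding this into \eqref{eq:lem1ieq} and dividing by $k^{-(q-p)/q}\|\bm{h}_{\max(k)}\|_q^q$ should give $z^q\le \beta^p k^{-(q-p)/q}z^p+\beta^pk^{-(q-p)/q}+1$, that is, $f_{p,q}(z)\le 0$. By Lemma~\ref{lem:lemma2}, $f_{p,q}$ is negative only on $[0,z_0)$, so $z\le z_0$. Substituting back into \eqref{eq:lem1ieq} yields
\[
\|\bm{h}_{-\max(k)}\|_p^p\le \bigl(\beta^p(1+z_0^p)+k^{\frac{q-p}{q}}\bigr)\|\bm{h}_{\max(k)}\|_q^p=\Psi\,\|\bm{h}_{\max(k)}\|_q^p .
\]
Then pass from $\ell_q$ to $\ell_2$ on the $k$-sparse head. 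For $q\ge2$, use $\|\cdot\|_q\le\|\cdot\|_2$. For $q<2$, use Hölder, $\|\cdot\|_q\le k^{1/q-1/2}\|\cdot\|_2$. In both cases $\|\bm{h}_{\max(k)}\|_q\le k^{\frac{1}{\min(q,2)}-\frac12}\|\bm{h}_{\max(k)}\|_2$.

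\textbf{Step 2 (sparse convex decomposition).} Use the sparse representation of a polytope (Cai–Zhang type): a vector whose $\ell_\infty$ norm is at most $\alpha$ and whose $\ell_1$ norm is at most $k\alpha$ is a convex combination of $k$-sparse vectors $\bm{u}_i$ with $\|\bm{u}_i\|_2\le\sqrt{k}\,\alpha$. Apply it to $\bm{h}_{-\max(k)}$ with $\alpha$ chosen from the $\ell_p$ tail bound, using $\|\bm{h}_{-\max(k)}\|_\infty\le k^{-1/p}\|\bm{h}_{\max(k)}\|_p$ and the interpolation between $\ell_p$ and $\ell_\infty$ that controls the $\ell_1$ mass. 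Each $\bm{u}_i$ should then satisfy $\|\bm{u}_i\|_2\le k^{\frac{1}{\min(q,2)}-\frac{q+1-p}{q}}\Psi\,\|\bm{h}_{\max(k)}\|_2$, which is exactly $\sqrt{\cdot}$ of the factor appearing in the RIC condition. I expect this bookkeeping of exponents — getting the $k$-powers to match $\frac{2}{\min(q,2)}-\frac{2q+2-2p}{q}$, and where $\sqrt{\Psi}$ versus $\Psi$ enters — to be the main obstacle. I would first try the standard Cai–Zhang route that produces $\sqrt{1+t^2}$-type constants.

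\textbf{Step 3 (RIP argument).} Feasibility of both $\hat{\bm{x}}$ and $\bm{x}$ gives $\|\bm{A}\bm{h}\|_2\le2\varepsilon$. Apply the RIP to $\bm{h}_{\max(k)}+\bm{u}_i$, which is $2k$-sparse, and use the identity argument of Cai–Zhang, $\sum_i\lambda_i\|\bm{A}(\bm{h}_{\max(k)}+\mu\bm{u}_i)\|^2$ with an optimized $\mu$. This bounds $\langle \bm{A}\bm{h}_{\max(k)},\bm{A}\bm{h}\rangle$ from below by $(1-\delta_{2k}\sqrt{1+t^2})\|\bm{h}_{\max(k)}\|_2^2$, where $t^2=k^{\frac{2}{\min(q,2)}-\frac{2q+2-2p}{q}}\Psi^2$. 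The inner product itself is at most $2\varepsilon\sqrt{1+\delta_{2k}}\|\bm{h}_{\max(k)}\|_2$. Under the RIC hypothesis this gives
\[
\|\bm{h}_{\max(k)}\|_2\le \frac{2\varepsilon\sqrt{1+\delta_{2k}}}{1-\delta_{2k}\sqrt{1+t^2}} .
\]
Finally, bound $\|\bm{h}_{-\max(k)}\|_2^2\le \|\bm{h}_{-\max(k)}\|_p^p\,\|\bm{h}_{-\max(k)}\|_\infty^{2-p}$ using Step 1. This gives $\|\bm{h}_{-\max(k)}\|_2\le k^{\frac{1}{\min(q,2)}-\frac{2-p+q}{2q}}\sqrt{\Psi}\,\|\bm{h}_{\max(k)}\|_2$. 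Adding it to the head bound yields \eqref{eq:RIPupper_general}.
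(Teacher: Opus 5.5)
Your proposal is correct. Apart from Step~2 it follows the paper's proof: the same root argument with $f_{p,q}$ giving $\|\bm{h}_{-\max(k)}\|_p^p\le\Psi\|\bm{h}_{\max(k)}\|_q^p$, the same convex-combination RIP identity with $\mu=1/(1+\sqrt{1+t^2})$ giving $\langle\bm{A}\bm{h}_{\max(k)},\bm{A}\bm{h}\rangle\ge(1-\delta_{2k}\sqrt{1+t^2})\|\bm{h}_{\max(k)}\|_2^2$, and the same final tail estimate.

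\textbf{Step 2 differs from the paper.} The paper uses the $\ell_p$ decomposition of Zhang--Li. That lemma only controls the average $\sum_i\lambda_i\|\bm{u}_i\|_2^2\le k^{-1}\|\bm{h}_{-\max(k)}\|_p^p\|\bm{h}_{-\max(k)}\|_{2-p}^{2-p}$, and the paper then bounds $\|\cdot\|_{2-p}^{2-p}\le\|\cdot\|_p^p\|\cdot\|_\infty^{2-2p}$. You instead use the Cai--Zhang $\ell_1/\ell_\infty$ polytope lemma together with $\|\bm{v}\|_1\le\|\bm{v}\|_p^p\|\bm{v}\|_\infty^{1-p}$. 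This bounds every $\|\bm{u}_i\|_2$ with the same constant, so Step~3 is unaffected.

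The bookkeeping you were worried about closes as follows. Set $\alpha:=k^{-1-\frac{1-p}{q}}\Psi\|\bm{h}_{\max(k)}\|_q$. Step~1 and $\|\bm{h}_{-\max(k)}\|_\infty\le k^{-1/q}\|\bm{h}_{\max(k)}\|_q$ give $\|\bm{h}_{-\max(k)}\|_1\le k\alpha$. Also $\alpha\ge k^{-1/q}\|\bm{h}_{\max(k)}\|_q\ge\|\bm{h}_{-\max(k)}\|_\infty$, and this holds precisely because $\Psi\ge k^{\frac{q-p}{q}}$. Hence $\|\bm{u}_i\|_2\le\sqrt{k}\,\alpha$. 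Combined with $\|\bm{h}_{\max(k)}\|_q\le k^{\frac{1}{\min(q,2)}-\frac12}\|\bm{h}_{\max(k)}\|_2$, this is exactly your claimed exponent.

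Write this check out, because it is the one place where a decomposition hypothesis must be verified. Your route has the merit of making it explicit. The paper quotes the $\ell_p$ decomposition with no hypothesis, and in that bare form it fails. For example, with a $1$-sparse tail and $k\ge2$, Jensen forces $\sum_i\lambda_i\|\bm{u}_i\|_2^2\ge\|\bm{h}_{-\max(k)}\|_2^2$, while the right-hand side equals $k^{-1}\|\bm{h}_{-\max(k)}\|_2^2$.

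\textbf{One correction in Step~1.} The inequality $\|\bm{h}_{\max(k)}\|_\infty\le k^{-1/q}\|\bm{h}_{\max(k)}\|_q$ is false in general, since the largest head entry is at least the $\ell_q$ average. Your sentence about the smallest head entry actually uses $\|\bm{h}_{-\max(k)}\|_\infty\le\min_{i\in T_0}|h_i|\le k^{-1/q}\|\bm{h}_{\max(k)}\|_q$. Replace $\|\bm{h}_{\max(k)}\|_\infty$ by $\|\bm{h}_{-\max(k)}\|_\infty$ and you get the paper's inequality $\|\bm{h}_{-\max(k)}\|_q^q\le k^{-\frac{q-p}{q}}\|\bm{h}_{\max(k)}\|_q^{q-p}\|\bm{h}_{-\max(k)}\|_p^p$. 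The rest of Step~1 then goes through unchanged.
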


\begin{proof}
	Define $\bm{h} = \hat{\bm{x}} - \bm{x}$. The case $\bm{h} = \bm{0}$ is trivial, so the subsequent analysis assumes $\bm{h} \neq \bm{0}$ and aims to bound $\|\bm{h}\|_2$. Decomposing $\bm{h} = \bm{h}_{\max(k)} + \bm{h}_{-\max(k)}$, upper bounds for $\|\bm{h}_{\max(k)}\|_2$ and $\|\bm{h}_{-\max(k)}\|_2$ are derived separately.
	
	First, we estimate $\|\bm{h}_{-\max(k)}\|_q$. It follows from the definition of the $\ell_\infty$-norm and the inequality $|v_i|^q = |v_i|^{q-p} \cdot |v_i|^p \le \|\bm{v}\|_{\infty}^{q-p} |v_i|^p$ that
	\begin{equation}\label{eq:inftynormieq}
		\|\bm{h}_{-\max(k)}\|_q^q \leq \|\bm{h}_{-\max(k)}\|_\infty^{q-p} \|\bm{h}_{-\max(k)}\|_p^p.
	\end{equation}

	For the vector $\bm{h}_{-\max(k)}$, the ordering of entries implies
	\begin{equation}\label{eq:orderingieq}
		 \begin{aligned}
			\|\bm{h}_{-\max(k)}\|_q^q \le k^{-\frac{q-p}{q}} \|\bm{h}_{\max(k)}\|_q^{q-p} \|\bm{h}_{-\max(k)}\|_p^p.
		\end{aligned}
	\end{equation}
	Lemma~\ref{lem:lemma1} applied to the term $\|\bm{h}_{-\max(k)}\|_p^p$, together with \eqref{eq:orderingieq}, gives
	\begin{equation}\label{eq:ieqtail}
		\begin{aligned}
			\|\bm{h}_{-\max(k)}\|_q^q 
			&\le k^{-\frac{q-p}{q}} \|\bm{h}_{\max(k)}\|_q^{q-p} 
			\Bigl[ \bigl(\beta^p + k^{\frac{q-p}{q}}\bigr) \|\bm{h}_{\max(k)}\|_q^p + \beta^p \|\bm{h}_{-\max(k)}\|_q^p \Bigr] \\
			&= \bigl(1 + \beta^p k^{-\frac{q-p}{q}}\bigr) \|\bm{h}_{\max(k)}\|_q^q 
			+ \beta^p k^{-\frac{q-p}{q}} \|\bm{h}_{-\max(k)}\|_q^p \|\bm{h}_{\max(k)}\|_q^{q-p}.
		\end{aligned}
	\end{equation}
	Let $z:=\frac{\|\bm{h}_{-\max(k)}\|_q}{\|\bm{h}_{\max(k)}\|_q}$. From the definition of $f_{p,q}(z)$ in Lemma~\ref{lem:lemma2} and rearranging \eqref{eq:ieqtail}, it follows that
	\begin{equation*}
		\frac{\|\bm{h}_{-\max(k)}\|_q^q}{\|\bm{h}_{\max(k)}\|_q^q}
		- \beta^p k^{-\frac{q-p}{q}} \frac{\|\bm{h}_{-\max(k)}\|_q^p}{\|\bm{h}_{\max(k)}\|_q^p}
		- \beta^p k^{-\frac{q-p}{q}} - 1 = f_{p,q}(z) \le 0.
	\end{equation*}
	By Lemma~\ref{lem:lemma2}, it follows that $z \le z_0$, i.e.,
	\begin{equation}\label{eq:rootieq}
		\|\bm{h}_{-\max(k)}\|_q \le z_0 \|\bm{h}_{\max(k)}\|_q.
	\end{equation}
	Substituting \eqref{eq:rootieq} in \eqref{eq:lem1ieq}, we directly obtain the upper bound
	\begin{equation}\label{eq:tightbound}
		\begin{aligned}
			\|\bm{h}_{-\max(k)}\|_p^p &\leq (\beta^p+k^{\frac{q-p}{q}})\|\bm{h}_{\max(k)}\|_q^p + \beta^p z_0^p \|\bm{h}_{\max(k)}\|_q^p \\
			&= [ \beta^p(1+z_0^p) + k^{\frac{q-p}{q}}] \|\bm{h}_{\max(k)}\|_q^p = \Psi \|\bm{h}_{\max(k)}\|_q^p.
		\end{aligned}
	\end{equation}
	By \cite[Lemma~2.2]{zhang2019optimal}, there exist $N\in\mathbb{N}$, weights $\{\lambda_i\}_{i=1}^N$ with $\lambda_i>0$, $\sum_{i=1}^N \lambda_i=1$, and $k$-sparse vectors $\{\bm{u}_i\}_{i=1}^N$ with $\operatorname{supp}(\bm{u}_i)\subseteq \operatorname{supp}(\bm{h}_{-\max(k)})$ such that $\bm{h}_{-\max(k)}=\sum_{i=1}^N \lambda_i \bm{u}_i$, and
	\begin{equation*}
		\sum_{i=1}^N \lambda_i \|\bm{u}_i\|_2^2 
		\le k^{-1} \|\bm{h}_{-\max(k)}\|_p^p \|\bm{h}_{-\max(k)}\|_{2-p}^{2-p}.
	\end{equation*}
	Combining \eqref{eq:inftynormieq} with \eqref{eq:rootieq} yields
$$
		k^{-1} \|\bm{h}_{-\max(k)}\|_p^p \|\bm{h}_{-\max(k)}\|_{2-p}^{2-p} \le k^{-1} \|\bm{h}_{-\max(k)}\|_p^{2p} \|\bm{h}_{-\max(k)}\|_\infty^{2-2p}.
$$
	It follows from
	\begin{equation}\label{eq:holderieq}
		\|\bm{h}_{-\max(k)}\|_{\infty} \le k^{-1} \|\bm{h}_{\max(k)}\|_1 
		\le k^{-\frac{1}{q}} \|\bm{h}_{\max(k)}\|_q
	\end{equation}
	that
	\begin{equation}\label{eq:uppera}
		k^{-1} \|\bm{h}_{-\max(k)}\|_p^{2p} \|\bm{h}_{-\max(k)}\|_\infty^{2-2p} \le k^{-1-\frac{2-2p}{q}} \|\bm{h}_{\max(k)}\|_q^{2-2p} \|\bm{h}_{-\max(k)}\|_p^{2p}.
	\end{equation}
	Substituting \eqref{eq:tightbound} into \eqref{eq:uppera} yields
$$
		k^{-1-\frac{2-2p}{q}} \|\bm{h}_{\max(k)}\|_q^{2-2p} \|\bm{h}_{-\max(k)}\|_p^{2p} 
		\leq k^{-1-\frac{2-2p}{q}} \Psi^2 \|\bm{h}_{\max(k)}\|_q^2.
$$
	It follows from H\"older's inequality with $1<q<2$ and the property of $\ell_q$-norm with $q\geq2$ that
	\begin{equation}\label{eq:finupper}
		\sum_{i=1}^N \lambda_i \|\bm{u}_i\|_2^2 
		\le k^{\frac{2}{\min(q,2)}-\frac{2q+2-2p}{q}} \Psi^2 \|\bm{h}_{\max(k)}\|_2^2.
	\end{equation}
	Define $\mu := \frac{1}{1+\sqrt{1+k^{\frac{2}{\min(q,2)}-\frac{2q+2-2p}{q}}\Psi^2}}$, $\bm{w} := (1-\mu)\bm{h}_{\max(k)} + \mu\bm{h}$, and $\bm{w}_i := \bm{h}_{\max(k)} + \mu\bm{u}_i$ for $i=1,2,\dots,N$, It follows immediately that
	\begin{equation}\label{eq:huper}
		\sum_{i=1}^N \lambda_i \| \bm{A}(\bm{w}-\frac{1}{2}\bm{w}_i)\|_2^2 = \sum_{i=1}^N \frac{\lambda_i}{4} \|\bm{A}\bm{w}_i\|_2^2.
	\end{equation}
	Since each $\bm{w}_i$ is $2k$-sparse, the $2k$-RIP gives $\|\bm{A}\bm{w}_i\|_2^2 \ge (1 - \delta_{2k})\|\bm{w}_i\|_2^2$. Expanding $\|\bm{w}_i\|_2^2 = \|\bm{h}_{\max(k)}\|_2^2 + \mu^2\|\bm{u}_i\|_2^2$ and applying the weights $\lambda_i$, we obtain the lower bound
	\begin{equation}
		\sum_{i=1}^N \frac{\lambda_i}{4} \|\bm{A}\bm{w}_i\|_2^2 \ge \frac{1 - \delta_{2k}}{4}( \|\bm{h}_{\max(k)}\|_2^2 + \mu^2 \sum_{i=1}^N \lambda_i \|\bm{u}_i\|_2^2). \label{eq:lower_bound}
	\end{equation}
	For the left-hand side of \eqref{eq:huper}, we rewrite the term inside the norm as
	\begin{equation}
		\bm{w} - \frac{1}{2}\bm{w}_i = (\frac{1}{2} - \mu)\bm{h}_{\max(k)} - \frac{1}{2}\mu \bm{u}_i + \mu \bm{h}:= \bm{v}_i.
	\end{equation}
	The vector $\bm{v}_i$ is strictly $2k$-sparse since $\bm{h}_{\max(k)}$ and $\bm{u}_i$ have disjoint supports of size at most $k$. Expanding the squared $\ell_2$-norm yields
	\begin{equation}\label{eq:expand_LHS}
		\| \bm{A}(\bm{w}-\frac{1}{2}\bm{w}_i) \|_2^2 = \|\bm{A}\bm{v}_i\|_2^2 + \mu^2\|\bm{A}\bm{h}\|_2^2 + 2\mu \langle \bm{A}\bm{v}_i, \bm{A}\bm{h} \rangle. 
	\end{equation}
	Since $\sum_{i=1}^N \lambda_i \bm{u}_i = \bm{h}_{-\max(k)} = \bm{h} - \bm{h}_{\max(k)}$, the weighted sum of $\bm{v}_i$ evaluates exactly to
	\begin{equation}\label{eq:weightsum}
		\sum_{i=1}^N \lambda_i \bm{v}_i = (\frac{1}{2} - \mu)\bm{h}_{\max(k)} - \frac{\mu}{2}\bm{h}_{-\max(k)} = \frac{1-\mu}{2}\bm{h}_{\max(k)} - \frac{\mu}{2}\bm{h}.
	\end{equation}
	Substituting \eqref{eq:weightsum} into the sum of the cross-terms $2\mu \langle \bm{A}\bm{v}_i, \bm{A}\bm{h} \rangle$, one has
	\begin{equation}\label{eq:cross-termssum}
		\sum_{i=1}^N \lambda_i 2\mu \langle \bm{A}\bm{v}_i, \bm{A}\bm{h} \rangle = \mu(1-\mu)\langle \bm{A}\bm{h}_{\max(k)}, \bm{A}\bm{h} \rangle - \mu^2\|\bm{A}\bm{h}\|_2^2.
	\end{equation}
	Notice that the term $-\mu^2\|\bm{A}\bm{h}\|_2^2$ perfectly annihilates the $\sum \lambda_i \mu^2\|\bm{A}\bm{h}\|_2^2$ term arising from \eqref{eq:expand_LHS}. Thus, the left-hand side of \eqref{eq:huper} simplifies to
	\begin{equation*}
		\sum_{i=1}^N \lambda_i \| \bm{A}(\bm{w} - \frac{1}{2}\bm{w}_i) \|_2^2 = \sum_{i=1}^N \lambda_i \|\bm{A}\bm{v}_i\|_2^2 + \mu(1-\mu)\langle \bm{A}\bm{h}_{\max(k)}, \bm{A}\bm{h} \rangle.
	\end{equation*}
	Applying the $2k$-RIP to $\|\bm{A}\bm{v}_i\|_2^2$, the Cauchy-Schwarz inequality to the inner product, and utilizing the feasibility constraint $\|\bm{A}\bm{h}\|_2 \le 2\varepsilon$, we obtain the upper bound
	\begin{equation}
		\begin{aligned}
			\sum_{i=1}^N \lambda_i \| \bm{A}(\bm{w} - \frac{1}{2}\bm{w}_i) \|_2^2
			&\le (1+\delta_{2k})[ (\frac{1}{2} - \mu)^2 \|\bm{h}_{\max(k)}\|_2^2 + \frac{\mu^2}{4}\sum_{i=1}^N \lambda_i \|\bm{u}_i\|_2^2 ] \\
			&\quad + 2\varepsilon \mu(1-\mu) \sqrt{1+\delta_{2k}}\, \|\bm{h}_{\max(k)}\|_2. \label{eq:upper_bound}
		\end{aligned}
	\end{equation}
	Combining the lower \eqref{eq:lower_bound} and upper \eqref{eq:upper_bound} estimates, we multiply both sides by $\frac{4}{\mu^2}$ and group the terms for $\|\bm{h}_{\max(k)}\|_2^2$ and $\sum \lambda_i \|\bm{u}_i\|_2^2$. Let $C^2 := k^{\frac{2}{\min(q,2)}-\frac{2q+2-2p}{q}} \Psi^2$ as defined in \eqref{eq:finupper}. After algebraic simplification utilizing the definition $\mu = \frac{1}{1+\sqrt{1+C^2}}$, the coefficients meticulously resolve to
$$
		4\sqrt{1+C^2} \left[ 1 - \delta_{2k}\sqrt{1+C^2} \right] \|\bm{h}_{\max(k)}\|_2^2 \le 8\varepsilon \sqrt{1+C^2} \sqrt{1+\delta_{2k}} \|\bm{h}_{\max(k)}\|_2.
$$
	Dividing by $4\sqrt{1+C^2}\|\bm{h}_{\max(k)}\|_2 > 0$ and combining with $\delta_{2k} \sqrt{1 + C^2} < 1$ yields
	\begin{equation}\label{eq:h_max_bound}
		\|\bm{h}_{\max(k)}\|_2 \leq \frac{2\varepsilon\sqrt{1+\delta_{2k}}}{1-\delta_{2k}\sqrt{1+k^{\frac{2}{\min(q,2)}-\frac{2q+2-2p}{q}}\Psi^2}}.
	\end{equation}
	
	According to Lemma \ref{lem:lemma1} and \eqref{eq:holderieq}, one has
	\begin{equation*}
		\begin{aligned}
			\|\bm{h}_{-\max(k)}\|_2^2 &\leq \|\bm{h}_{-\max(k)}\|_\infty^{2-p} \|\bm{h}_{-\max(k)}\|_p^p \\
			&\leq k^{-\frac{2-p}{q}} ( \beta^p+k^{\frac{q-p}{q}} ) ( \|\bm{h}_{\max(k)}\|_q^2 + \frac{\beta^p}{\beta^p+k^{\frac{q-p}{q}}} \|\bm{h}_{-\max(k)}\|_q^p \|\bm{h}_{\max(k)}\|_q^{2-p} ).
		\end{aligned}
	\end{equation*}
	Combining \eqref{eq:rootieq} with H\"older's inequality ( $1<q<2$) and the $\ell_q$-norm property for $q\ge 2$ yields
	\begin{equation}\label{eq:h-maxupper}
		\begin{aligned}
			\|\bm{h}_{-\max(k)}\|_2^2 &\leq k^{-\frac{2-p}{q}} ( \beta^p(1+z_0^p) + k^{\frac{q-p}{q}} ) \|\bm{h}_{\max(k)}\|_q^2 = k^{-\frac{2-p}{q}} \Psi \|\bm{h}_{\max(k)}\|_q^2 \\
			&\leq k^{\frac{2}{\min(q,2)}-\frac{2-p+q}{q}} \Psi \|\bm{h}_{\max(k)}\|_2^2.
		\end{aligned}
	\end{equation}
	It follows from the triangle inequality and  \eqref{eq:h-maxupper} that
	\begin{equation}\label{eq:hupper}
		\|\bm{h}\|_2 \leq \|\bm{h}_{\max(k)}\|_2 + \|\bm{h}_{-\max(k)}\|_2 \leq ( 1 + k^{\frac{1}{\min(q,2)}-\frac{2-p+q}{2q}} \sqrt{\Psi} ) \|\bm{h}_{\max(k)}\|_2.
	\end{equation}
	Combining \eqref{eq:hupper} with \eqref{eq:h_max_bound}, we obtain the final upper bound \eqref{eq:RIPupper_general}.
\end{proof}

When $p=1$ in Theorem \ref{thm:RIPstablerecovery}, we have the following result.

\begin{corollary}[{\cite[Corollary 1]{zhu2026stable}}]\label{corollary1}
	Assume that $\bm{x}\neq\bm{0}$ is $k$-sparse and satisfies $\frac{\|\bm{x}\|_1}{\|\bm{x}\|_q} \leq \beta$ with $q > 1$ and $\beta > 0$. Let $\hat{\bm{x}}$ be a solution to $\mathcal{Q}_{1,q}^{\epsilon}$. Define a function $f_{1,q}(z) = z^q - \beta k^{-\frac{q-1}{q}} z - \beta k^{-\frac{q-1}{q}} - 1$. If $\bm{A}$ satisfies $2k$-RIP with RIC
$$
		\delta_{2k} < \frac{1}{\sqrt{1 + k^{\frac{2}{\min\{q,2\}}} (\beta(1+z_0)k^{-1} + k^{-\frac{1}{q}})^2}},
$$
	where $z_0$ is the unique zero-point of $f_{1,q}(z)$ in $[0, +\infty)$, then the distance between $\hat{\bm{x}}$ and $\bm{x}$ is upper-bounded by
	\begin{equation}\label{eq:RIPupper}
		\|\hat{\bm{x}} - \bm{x}\|_2 \leq \frac{2\epsilon \sqrt{1 + \delta_{2k}} (1 + k^{\frac{1}{2} - \frac{1}{q}}) \sqrt{\beta(1+z_0) + k^{\frac{q-1}{q}}}}{1 - \delta_{2k} \sqrt{1 + k^{\frac{2}{\min\{q,2\}}} (\beta(1+z_0)k^{-1} + k^{-\frac{1}{q}})^2}}.
	\end{equation}
\end{corollary}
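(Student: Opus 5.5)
The plan is to obtain Corollary~\ref{corollary1} by specializing Theorem~\ref{thm:RIPstablerecovery} to $p=1$ and then simplifying the exponents. The hypotheses match directly: $\frac{\|\bm{x}\|_1}{\|\bm{x}\|_q}\le\beta$ is the case $p=1$ of the theorem's assumption. The auxiliary function of Lemma~\ref{lem:lemma2} becomes $f_{1,q}(z)=z^q-\beta k^{-\frac{q-1}{q}}z-\beta k^{-\frac{q-1}{q}}-1$. By Lemma~\ref{lem:lemma2} it has a unique zero $z_0\ge 0$, and
\[
\Psi=\beta(1+z_0)+k^{\frac{q-1}{q}}.
\]

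First I will check the RIP condition. For $p=1$ the exponent in the theorem is $\frac{2}{\min(q,2)}-\frac{2q}{q}=\frac{2}{\min(q,2)}-2$. Hence
\[
k^{\frac{2}{\min(q,2)}-2}\Psi^2=k^{\frac{2}{\min(q,2)}}\bigl(k^{-1}\Psi\bigr)^2=k^{\frac{2}{\min(q,2)}}\bigl(\beta(1+z_0)k^{-1}+k^{-\frac1q}\bigr)^2 .
\]
This is exactly the quantity in the corollary. So the corollary's hypothesis on $\delta_{2k}$ coincides with that of Theorem~\ref{thm:RIPstablerecovery}, and the denominators of \eqref{eq:RIPupper_general} and \eqref{eq:RIPupper} agree.

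Next I will compare the numerators. With $p=1$, the theorem gives the factor $1+k^{e}\sqrt{\Psi}$, where
\[
e=\frac{1}{\min(q,2)}-\frac{1+q}{2q}.
\]
This equals $-\frac{1}{2q}$ for $q\ge2$ and $\frac{1}{2q}-\frac12$ for $1<q<2$. I want to show
\[
1+k^{e}\sqrt{\Psi}\le\bigl(1+k^{\frac12-\frac1q}\bigr)\sqrt{\Psi}.
\]
Since $\Psi\ge k^{\frac{q-1}{q}}\ge1$, we have $1\le\sqrt{\Psi}$, so it suffices to show $k^{e}\le k^{\frac12-\frac1q}$, that is, $e\le\frac12-\frac1q$.
\begin{itemize}
\item For $q\ge2$ this reads $-\frac1{2q}\le\frac12-\frac1q$, i.e.\ $q\ge1$, which holds.
\item For $1<q<2$ it reads $\frac{3}{2q}\le1$, i.e.\ $q\ge\frac32$.
\end{itemize}
For $q\ge\frac32$ the bound \eqref{eq:RIPupper} then follows from \eqref{eq:RIPupper_general}.

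The main obstacle is the range $1<q<\frac32$, where the plain comparison above fails for $k>1$. Two alternatives suggest themselves there. The first is the sharper bound $1\le k^{\frac{1}{2q}-\frac12}\sqrt{\Psi}$, which comes from $\Psi\ge k^{\frac{q-1}{q}}$. The second is to redo the tail estimate \eqref{eq:h-maxupper} for $p=1$ with a cruder H\"older step, converting $\|\bm{h}_{\max(k)}\|_q$ to $\|\bm{h}_{\max(k)}\|_2$ so as to produce $k^{\frac12-\frac1q}$ directly. Concretely, I will first retrace \eqref{eq:holderieq} and \eqref{eq:h-maxupper} with $p=1$, bounding
\[
\|\bm{h}_{-\max(k)}\|_2^2\le\|\bm{h}_{-\max(k)}\|_\infty\|\bm{h}_{-\max(k)}\|_1 .
\]
Here I use $\|\bm{h}_{-\max(k)}\|_\infty\le k^{-1/2}\|\bm{h}_{\max(k)}\|_2$ in place of the $\ell_q$ version, and $\|\bm{h}_{\max(k)}\|_q\le k^{\frac1q-\frac12}\|\bm{h}_{\max(k)}\|_2$ inside $\Psi\|\bm{h}_{\max(k)}\|_q$. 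This gives
\[
\|\bm{h}_{-\max(k)}\|_2\le k^{\frac1{2q}-\frac12}\sqrt{\Psi}\,\|\bm{h}_{\max(k)}\|_2 .
\]
Combined with \eqref{eq:h_max_bound} and the triangle inequality, this produces a numerator of the claimed shape. If that still misses the exact constant for $q<\frac32$, I will fall back on the original argument of \cite[Corollary~1]{zhu2026stable}, since the corollary is quoted from there.
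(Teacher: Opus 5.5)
Your route is the paper's: it gets this corollary just by setting $p=1$ in Theorem~\ref{thm:RIPstablerecovery} and citing \cite{zhu2026stable}. Two of your checks are correct. The RIC hypotheses coincide, since $k^{\frac{2}{\min(q,2)}-2}\Psi^2=k^{\frac{2}{\min(q,2)}}\bigl(\beta(1+z_0)k^{-1}+k^{-1/q}\bigr)^2$. Your comparison of numerators for $q\ge\frac32$ is also right. The gap is the range $1<q<\frac32$, which your proposal leaves open, and neither fallback closes it. Your ``cruder H\"older'' recomputation gives $\|\bm{h}_{-\max(k)}\|_2^2\le k^{-1/2}\|\bm{h}_{\max(k)}\|_2\cdot\Psi k^{\frac1q-\frac12}\|\bm{h}_{\max(k)}\|_2$. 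That is the tail factor $k^{\frac{1}{2q}-\frac12}=k^{\frac{1-q}{2q}}$, exactly the exponent already in \eqref{eq:RIPupper_general}. So it reproduces the numerator $1+k^{\frac{1-q}{2q}}\sqrt{\Psi}$, not $(1+k^{\frac12-\frac1q})\sqrt{\Psi}$, and your claim that it gives ``a numerator of the claimed shape'' is wrong. Your other idea, $1\le k^{\frac{1-q}{2q}}\sqrt{\Psi}$, only reduces the task to $2k^{\frac{1-q}{2q}}\le 1+k^{\frac12-\frac1q}$. That fails as soon as $q$ is close to $1$ and $k\ge2$: the left side tends to $2$, the right side to $1+k^{-1/2}$.

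In fact no upper estimate of the theorem's numerator can close the gap, because for admissible parameters the quoted bound is strictly smaller than the specialized one. Take $q=1.1$, $k=100$, $\beta=1$; this is admissible, since every nonzero $1$-sparse $\bm{x}$ has $\|\bm{x}\|_1/\|\bm{x}\|_q=1$. Then $z_0\approx3.49$, $\Psi=1+z_0+k^{1/11}\approx6.01$, and
\[
1+k^{-1/22}\sqrt{\Psi}\approx2.99>2.82\approx\bigl(1+k^{-9/22}\bigr)\sqrt{\Psi},
\]
while the two denominators are identical. So \eqref{eq:RIPupper} is not a consequence of \eqref{eq:RIPupper_general} there; your reduced inequality reads $1.62\le1.15$, which is false.

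What is missing is a genuinely sharper tail estimate, e.g.\ $\|\bm{h}_{-\max(k)}\|_2\le\bigl((1+k^{\frac12-\frac1q})\sqrt{\Psi}-1\bigr)\|\bm{h}_{\max(k)}\|_2$ for $1<q<\frac32$. Alternatively, you need an actual, checked reproduction of the argument in \cite{zhu2026stable}; deferring to the citation is not a proof. The paper itself supplies nothing beyond the $p=1$ specialization. So what this route establishes for all $q>1$ is the stated RIC condition together with the numerator $1+k^{\frac{1}{\min\{q,2\}}-\frac{1+q}{2q}}\sqrt{\Psi}$, and that implies the quoted form only for $q\ge\frac32$.
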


In order to compare Theorem \ref{thm:RIPstablerecovery} with Theorem 1 in \cite{zhu2026stable}, we recall it as follows.

\begin{theorem}[{\cite[Theorem~1]{zhu2026stable}}]\label{thm:zhu_RIP_recall}
	Let $0<p\le 1$, $q>1$, and $\beta>0$. Assume that $\bm{x}\neq \bm{0}$ is $k$-sparse in $\mathcal{M}\{\bm{A},\bm{x},\bm{b},m,n,\epsilon\}$ and satisfies
$$
		\frac{\|\bm{x}\|_p}{\|\bm{x}\|_q}\le \beta .
$$
	Let $\hat{\bm{x}}$ be a solution of $\mathcal{Q}_{p,q}^{\epsilon}$, and let $z_0$ be the unique zero point of $f_{p,q}$ in Lemma~\ref{lem:lemma2}. If $\bm{A}$ satisfies the $2k$-RIP with RIC
	\begin{equation}\label{eq:zhuRIC}
		\delta_{2k} < \frac{1}{\sqrt{1+ 3^{2-2p} k^{\frac{2}{\min\{q,2\}}-\frac{2-2p}{q}} (\beta(1+z_0)k^{-\frac{1}{p}} + k^{-\frac{1}{q}})^{2p}}},
	\end{equation}
	then
	\begin{equation}\label{eq:zhuerrbound}
		\|\hat{\bm{x}}-\bm{x}\|_2 \le \frac{2\epsilon\sqrt{1+\delta_{2k}} (1+ k^{\frac{1}{\min\{q,2\}}-\frac{2-p+q}{2q}} \sqrt{\beta^p(1+z_0^p)+k^{\frac{q-p}{q}}})}{1- \delta_{2k} \sqrt{1+ 3^{2-2p} k^{\frac{2}{\min\{q,2\}}-\frac{2-2p}{q}} (\beta(1+z_0)k^{-\frac{1}{p}} + k^{-\frac{1}{q}})^{2p}}}.
	\end{equation}
\end{theorem}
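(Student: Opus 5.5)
This is Zhu et al.'s Theorem 1, recalled for comparison, so the task is to reconstruct its proof along the template of Theorem~\ref{thm:RIPstablerecovery}. The route is identical up to the tail bound. I will reach the estimate \eqref{eq:tightbound}, $\|\bm{h}_{-\max(k)}\|_p^p\le\Psi\|\bm{h}_{\max(k)}\|_q^p$ with $\Psi=\beta^p(1+z_0^p)+k^{(q-p)/q}$, using two ingredients:
\begin{itemize}
\item Lemma~\ref{lem:lemma1}, the bound on $\|\bm{h}_{-\max(k)}\|_p^p$;
\item Lemma~\ref{lem:lemma2}, which gives \eqref{eq:rootieq}, $\|\bm{h}_{-\max(k)}\|_q\le z_0\|\bm{h}_{\max(k)}\|_q$.
\end{itemize}
The error bound \eqref{eq:zhuerrbound} has numerator identical to \eqref{eq:RIPupper_general}. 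So \eqref{eq:h-maxupper} and \eqref{eq:hupper} carry over verbatim, and only the control of $\sum_i\lambda_i\|\bm{u}_i\|_2^2$ differs.

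For that term I would follow the cruder route that produces the factor $3^{2-2p}$ and the quantity $(\beta(1+z_0)k^{-1/p}+k^{-1/q})^{2p}$.

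First, I convert $\Psi$ into a $p$-th power. Since $0<p\le1$, subadditivity of $t\mapsto t^p$ gives
\[
\beta^p(1+z_0^p)+k^{(q-p)/q}\le \beta^p(1+z_0)^p\cdot 2^{1-p}+\dots .
\]
More cleanly, I write $\Psi = k\,[\beta^p k^{-1}(1+z_0^p)+k^{-p/q}]$ and bound the bracket by $3^{1-p}(\beta(1+z_0)k^{-1/p}+k^{-1/q})^p$. This uses the power-mean (concavity) inequality on three terms,
\[
a^p+b^p+c^p\le 3^{1-p}(a+b+c)^p,
\]
applied with $a=\beta k^{-1/p}$, $b=\beta z_0k^{-1/p}$ and $c=k^{-1/q}$. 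This is where the constant $3^{1-p}$ comes from.

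Next, I use the sparse decomposition \cite[Lemma~2.2]{zhang2019optimal} together with \eqref{eq:inftynormieq}, \eqref{eq:holderieq} and \eqref{eq:uppera} as before. This gives
\[
\sum\lambda_i\|\bm{u}_i\|_2^2\le k^{-1-\frac{2-2p}{q}}\Psi^2\|\bm{h}_{\max(k)}\|_q^2.
\]
Substituting the previous estimate for $\Psi$ turns this into $k^{1-\frac{2-2p}{q}}\,3^{2-2p}(\cdots)^{2p}\|\bm{h}_{\max(k)}\|_q^2$. Converting $\|\cdot\|_q$ to $\|\cdot\|_2$ via H\"older for $q<2$ (monotonicity for $q\ge2$) supplies the factor $k^{2/\min(q,2)-1}$. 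The total exponent is then $\frac{2}{\min(q,2)}-\frac{2-2p}{q}$, matching \eqref{eq:zhuRIC}.

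The remaining steps copy the existing argument with $C^2:=3^{2-2p}k^{\frac{2}{\min(q,2)}-\frac{2-2p}{q}}(\beta(1+z_0)k^{-1/p}+k^{-1/q})^{2p}$:
\begin{itemize}
\item apply the convex-combination/RIP argument with $\mu=1/(1+\sqrt{1+C^2})$ and the vectors $\bm{w},\bm{w}_i,\bm{v}_i$;
\item combine the lower bound \eqref{eq:lower_bound} with the upper bound \eqref{eq:upper_bound} to obtain the analogue of \eqref{eq:h_max_bound} under $\delta_{2k}\sqrt{1+C^2}<1$;
\item combine with \eqref{eq:hupper}.
\end{itemize}

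The main obstacle is the exponent bookkeeping in the conversion step. One must check that the powers of $k$ pulled out of $\Psi^2$ combine exactly into $k^{-\frac{2-2p}{q}}$ times the stated bracket. If my grouping is off, the fix is to instead apply the power-mean inequality to $\Psi/k$ written as $\beta^pk^{-1}+\beta^pz_0^pk^{-1}+k^{-p/q}$. Since $(\beta k^{-1/p})^p=\beta^pk^{-1}$, this grouping is consistent.
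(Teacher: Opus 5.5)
Your reconstruction is correct and follows essentially the paper's own route. The paper cites this statement from \cite{zhu2026stable} without proof, but it follows from Theorem~\ref{thm:RIPstablerecovery} combined with Proposition~\ref{prop:deltaimprove}, and that proposition's proof uses exactly your bound $a^p+b^p+c^p\le 3^{1-p}(a+b+c)^p$, with the same $a=\beta k^{-1/p}$, $b=\beta z_0k^{-1/p}$, $c=k^{-1/q}$, to get $T_2\le T_1$. Inlining it with the larger $C^2=T_1$ is legitimate: when \eqref{eq:lower_bound} and \eqref{eq:upper_bound} are combined, the term $\sum_i\lambda_i\|\bm{u}_i\|_2^2$ carries the nonpositive coefficient $-\mu^2\delta_{2k}/2$, so any upper bound $C^2\|\bm{h}_{\max(k)}\|_2^2$ on it still gives $\|\bm{h}_{\max(k)}\|_2\le 2\epsilon\sqrt{1+\delta_{2k}}/(1-\delta_{2k}\sqrt{1+C^2})$.
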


Let $T_1 :=	3^{2-2p}k^{\frac{2}{\min\{q,2\}}-\frac{2-2p}{q}}(\beta(1+z_0)k^{-\frac{1}{p}}+k^{-\frac{1}{q}})^{2p}$. Then the corresponding upper bound on $\delta_{2k}$ in \eqref{eq:zhuRIC} can be rewritten as $\Delta_{\rm Zhu}:=\frac{1}{\sqrt{1+T_1}}$. In contrast, Theorem~\ref{thm:RIPstablerecovery} involves $T_2:=k^{\frac{2}{\min\{q,2\}}-\frac{2q+2-2p}{q}}\Psi^2$,  and its upper bound on $\delta_{2k}$ can be rewritten as $\Delta_{\rm new}:=\frac{1}{\sqrt{1+T_2}}$.  Thus, we have the following proposition.

\begin{proposition}\label{prop:deltaimprove}
	For all parameters $0<p\le 1$, $q>1$, $\beta>0$, $k\ge1$, and with $z_0$ defined as in Lemma~\ref{lem:lemma2}, we have $T_1 \geq T_2$. Consequently, $\Delta_{\rm new}\ge \Delta_{\rm Zhu}$. Moreover, equality holds when $p=1$, while the inequality is strict for $0<p<1$.
\end{proposition}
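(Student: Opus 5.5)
The plan is to compare $T_1$ and $T_2$ directly by pulling out their common power of $k$, so that the claim reduces to an elementary inequality between a sum of three $p$-th powers and the $p$-th power of a sum. That inequality is the concavity (power-mean) inequality for $t\mapsto t^p$. Since both $T_1$ and $T_2$ carry the factor $k^{\frac{2}{\min\{q,2\}}-\frac{2-2p}{q}}$, and $T_2$ has the extra factor $k^{-2}$, the claim $T_1\ge T_2$ is equivalent to
\[
k^{-2}\Psi^2 \le 3^{2-2p}\bigl(\beta(1+z_0)k^{-1/p}+k^{-1/q}\bigr)^{2p}.
\]
Both sides are nonnegative, so after taking square roots it suffices to prove
\[
k^{-1}\Psi \le 3^{1-p}\bigl(\beta(1+z_0)k^{-1/p}+k^{-1/q}\bigr)^{p}.
\]

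To do this, set $a:=\beta k^{-1/p}$, $a':=\beta z_0 k^{-1/p}$ and $b:=k^{-1/q}$. Then
\[
k^{-1}\Psi=\beta^p k^{-1}+\beta^p z_0^p k^{-1}+k^{-p/q}=a^p+(a')^p+b^p .
\]
The right-hand side is exactly $3^{1-p}(a+a'+b)^p$. By concavity of $t\mapsto t^p$ on $[0,\infty)$ for $0<p\le1$ (Jensen with three equal weights),
\[
\tfrac13\bigl(a^p+(a')^p+b^p\bigr)\le\bigl(\tfrac{a+a'+b}{3}\bigr)^p .
\]
This is precisely the required inequality, so $T_1\ge T_2$. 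Since $T\mapsto(1+T)^{-1/2}$ is strictly decreasing on $[0,\infty)$, it follows that $\Delta_{\rm new}\ge\Delta_{\rm Zhu}$, with equality exactly when $T_1=T_2$.

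For the equality and strictness statements, note that when $p=1$ we have $3^{1-p}=1$ and both sides are literally $a+a'+b$, so $T_1=T_2$. When $0<p<1$, $t^p$ is strictly concave, so Jensen is an equality only if $a=a'=b$; in particular this would force $z_0=1$.

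The main (though mild) obstacle is ruling out $z_0=1$ for $0<p<1$. For this I will evaluate $f_{p,q}(1)=1-2\beta^pk^{-\frac{q-p}{q}}-1=-2\beta^pk^{-\frac{q-p}{q}}<0$. Hence $1$ is not a zero of $f_{p,q}$, and by the uniqueness of the zero point in Lemma~\ref{lem:lemma2} we get $z_0\neq 1$. Therefore $a\neq a'$ (as $\beta>0$), the concavity inequality is strict, and $T_1>T_2$, giving $\Delta_{\rm new}>\Delta_{\rm Zhu}$ for $0<p<1$.
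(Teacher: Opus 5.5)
Your proposal is correct and follows the paper's proof essentially step for step. Both arguments factor out the common power $k^{\frac{2}{\min\{q,2\}}-\frac{2-2p}{q}}$, reduce $T_1\ge T_2$ to $a^p+b^p+c^p\le 3^{1-p}(a+b+c)^p$ with $a=\beta k^{-1/p}$, $b=\beta z_0 k^{-1/p}$, $c=k^{-1/q}$ via concavity of $t\mapsto t^p$, and obtain strictness for $0<p<1$ by excluding $z_0=1$ through $f_{p,q}(1)=-2\beta^{p}k^{-\frac{q-p}{q}}<0$. Uniqueness of the zero is not needed in that last step, since $z_0$ is a zero and $1$ is not.
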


\begin{proof}
	The identity $\frac{2q+2-2p}{q}= \frac{2-2p}{q}+2$ implies $k^{\frac{2}{\min(q,2)}-\frac{2q+2-2p}{q}} \Psi^2= k^{\frac{2}{\min(q,2)}-\frac{2-2p}{q}} k^{-2} \Psi^2$. Hence $T_{1}\ge T_{2}$ is equivalent to
$$
		k^{-2}\Psi^2 \le 3^{2-2p}(\beta(1+z_0)k^{-1/p}+k^{-1/q})^{2p}.
$$
	Taking square roots (all quantities are positive) gives the equivalent inequality
$$
		k^{-1}\Psi \le 3^{1-p}\bigl(\beta(1+z_0)k^{-1/p}+k^{-1/q}\bigr)^{p}.
$$
	According to the definition of $\Psi$, one has
$$
		\Psi = \beta^p(1+z_0^p) + k^{\frac{q-p}{q}}= k [ \bigl(\beta k^{-1/p}\bigr)^p + (\beta z_0 k^{-1/p})^p + \bigl(k^{-1/q}\bigr)^p ].
$$
	Letting $a = \beta k^{-1/p}$, $b = \beta z_0 k^{-1/p}$, and $c = k^{-1/q}$ yields $\Psi = k(a^p+b^p+c^p)$. 
	
	For $0<p\le1$, the function $t\mapsto t^p$ is concave. By the power mean inequality (or the generalized H\"older inequality), we have $a^p+b^p+c^p \le 3^{1-p}(a+b+c)^p$. Hence,
$$
		\Psi \le k\cdot 3^{1-p}\,(a+b+c)^p = k[\,3^{\frac{1-p}{p}}(\beta(1+z_0)k^{-1/p}+k^{-1/q})]^p.
$$
	Dividing both sides by $k$ yields the desired inequality
$$
		k^{-1}\Psi \le 3^{1-p}\bigl(\beta(1+z_0)k^{-1/p}+k^{-1/q}\bigr)^p.
$$
	When $0<p<1$, the inequality $a^p+b^p+c^p \le 3^{1-p}(a+b+c)^p$ is strict unless $a=b=c$; this would require $z_0=1$,
	whereas $f_{p,q}(1)=1-\beta^p k^{-\frac{q-p}{q}}-\beta^p k^{-\frac{q-p}{q}}-1<0$, so $z_0\neq1$. Hence $T_2<T_1$ for $0<p<1$. For $p=1$ the inequality becomes an equality, and consequently $T_{1}=T_{2}$. This implies $\Delta_{\rm new}\ge \Delta_{\rm Zhu}$. 
\end{proof}
Figure \ref{fig:deltaComparison} intuitively illustrates the magnitudes of two different upper bounds under varying sparsity levels.

\begin{remark}
	Proposition~\ref{prop:deltaimprove} shows that the proposed RIP condition
	is never more restrictive than that of \cite[Theorem~1]{zhu2026stable}.
	For $0<p<1$, it permits a strictly larger range for
	$\delta_{2k}$. For $p=1$, the two upper bounds on $\delta_{2k}$ coincide, so the result
	reduces to the corresponding $\ell_1/\ell_q$ case.
\end{remark}

\begin{figure}[htbp]
	\centering
	\includegraphics[scale=0.4]{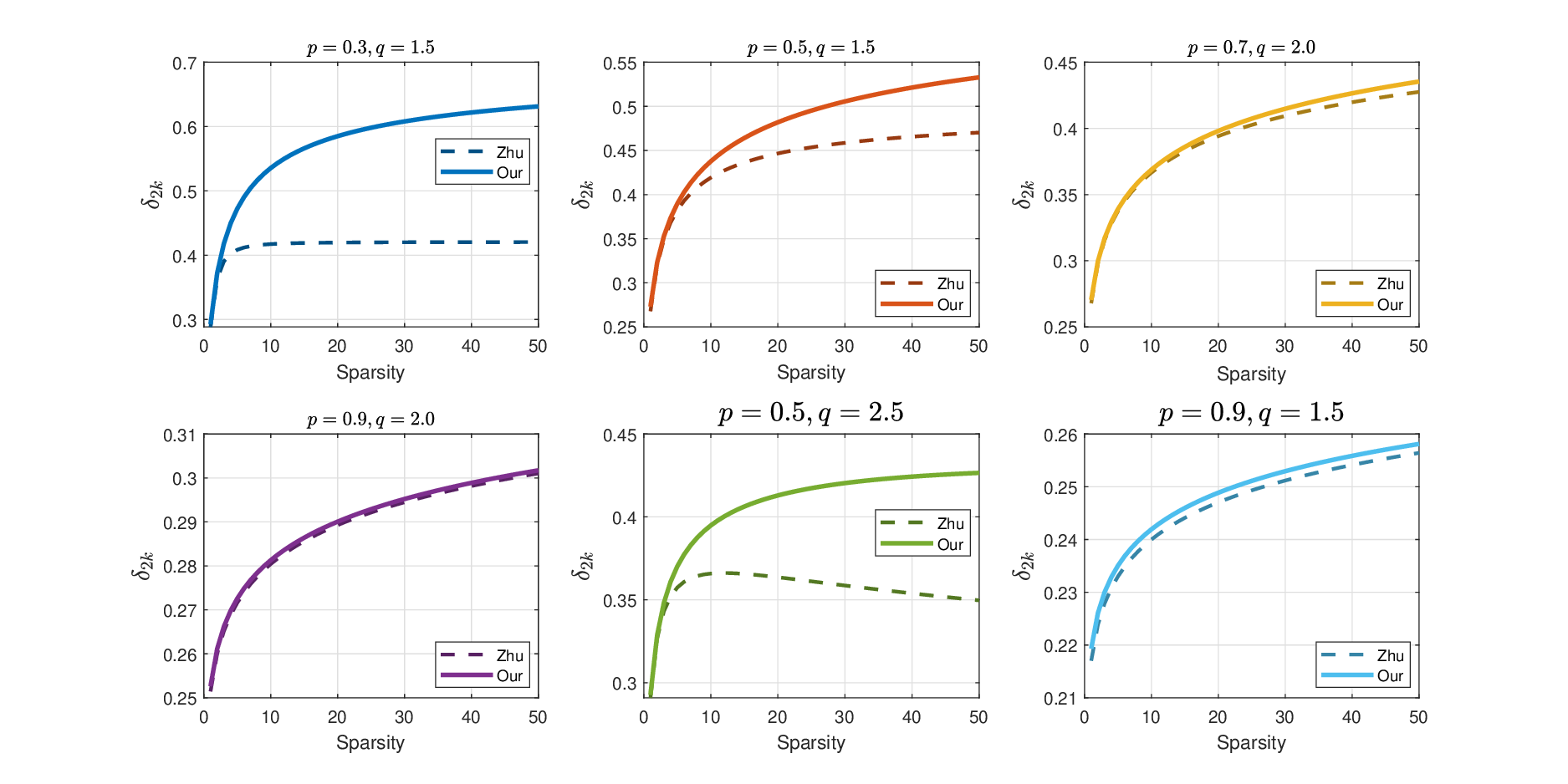}  
	\caption{The behavior of two different upper bounds under different sparsity levels $k$.}
	\label{fig:deltaComparison}
\end{figure}

\begin{proposition}[]\label{prop:tighter_bound}
	Let $B_{o}$ denote the upper bound in Theorem \ref{thm:RIPstablerecovery}, and $B_{z}$ denote the upper bound in Theorem \ref{thm:zhu_RIP_recall}. If the RIC $\delta_{2k}$ of the measurement matrix $\bm{A}$ satisfies the sufficient conditions of both results, then $B_{o} \le B_{z}$.
	
\end{proposition}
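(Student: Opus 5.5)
Compare the two bounds factor by factor. Write
\[
B_o=\frac{2\epsilon\sqrt{1+\delta_{2k}}\,N}{1-\delta_{2k}\sqrt{1+T_2}},\qquad
B_z=\frac{2\epsilon\sqrt{1+\delta_{2k}}\,N}{1-\delta_{2k}\sqrt{1+T_1}},
\]
where $N:=1+k^{\frac{1}{\min\{q,2\}}-\frac{2-p+q}{2q}}\sqrt{\Psi}$ with $\Psi=\beta^p(1+z_0^p)+k^{\frac{q-p}{q}}$. First check that the numerators coincide. The numerator of \eqref{eq:RIPupper_general} is $2\varepsilon\sqrt{1+\delta_{2k}}(1+k^{\frac{1}{\min(q,2)}-\frac{2-p+q}{2q}}\sqrt{\Psi})$. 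The numerator of \eqref{eq:zhuerrbound} contains exactly the same exponent of $k$ and the same quantity $\beta^p(1+z_0^p)+k^{\frac{q-p}{q}}=\Psi$ under the square root. Here $z_0$ is in both cases the unique zero of $f_{p,q}$ from Lemma~\ref{lem:lemma2}, so it is the same number. Hence $B_o$ and $B_z$ share the same positive numerator $2\epsilon\sqrt{1+\delta_{2k}}\,N$, and $\delta_{2k}$ is the same RIC of $\bm A$ in both expressions.

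Next compare the denominators using Proposition~\ref{prop:deltaimprove}, which gives $T_2\le T_1$. Hence $\sqrt{1+T_2}\le\sqrt{1+T_1}$, and since $\delta_{2k}\ge0$,
\[
1-\delta_{2k}\sqrt{1+T_2}\ \ge\ 1-\delta_{2k}\sqrt{1+T_1}.
\]
By hypothesis $\delta_{2k}$ satisfies both sufficient conditions, namely $\delta_{2k}<\Delta_{\rm new}$ and $\delta_{2k}<\Delta_{\rm Zhu}$. These are equivalent to $\delta_{2k}\sqrt{1+T_2}<1$ and $\delta_{2k}\sqrt{1+T_1}<1$, so both denominators are strictly positive. (The first condition in fact follows from the second.) For a fixed nonnegative numerator, the map $d\mapsto N'/d$ is nonincreasing on $(0,\infty)$, which gives $B_o\le B_z$.

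The only point needing care is the first step: confirming that the two numerators really are the same expression, and that $z_0$ and $\delta_{2k}$ denote the same objects in both theorems, so that the comparison reduces entirely to $T_2\le T_1$. Once that is checked, the rest is monotonicity. As a remark, for $0<p<1$ Proposition~\ref{prop:deltaimprove} gives $T_2<T_1$. If $\delta_{2k}>0$, the denominators then differ strictly, and the inequality $B_o<B_z$ is strict. For $p=1$ the two bounds coincide.
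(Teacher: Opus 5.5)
Your proof is correct, and its skeleton matches the paper's: the numerators coincide, so everything reduces to comparing $1-\delta_{2k}\sqrt{1+T_2}$ with $1-\delta_{2k}\sqrt{1+T_1}$. The difference is in the key inequality $T_2\le T_1$. You obtain it by citing Proposition~\ref{prop:deltaimprove}, whereas the paper's proof re-derives it:
\begin{itemize}
\item it factors out the common power $k^{\frac{2}{\min(q,2)}-\frac{2}{q}}$ and reduces to $\Lambda_o\le 3^{2-2p}\Lambda_z$;
\item it uses the two-term bound $1+z_0^p\le 2^{1-p}(1+z_0)^p$ to get $\Lambda_o\le(1+2^{1-p}x^p)^2$, where $x=\beta(1+z_0)k^{\frac{p-q}{pq}}$;
\item it shows by calculus that $(1+2^{1-p}x^p)/(1+x)^p$ has maximum $3^{1-p}$, attained at $x=2$.
\end{itemize}
That two-step argument is in effect a proof of the same three-term power-mean inequality $a^p+b^p+c^p\le 3^{1-p}(a+b+c)^p$ on which Proposition~\ref{prop:deltaimprove} rests. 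So the paper gains only an independent check.

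Your route is shorter, and it shows that the error-bound comparison is a direct corollary of the RIC comparison. It also makes explicit the positivity of both denominators, which the paper leaves implicit. Finally, it sidesteps the paper's exponent bookkeeping: the paper's intermediate quantity $k^{\frac{2}{\min(q,2)}-\frac{2-2p}{q}}\Phi_z^2$ is actually $k^{2p/q}T_1$, although the final comparison it performs is the correct one.

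One small caveat on your closing remark: the strict inequality $B_o<B_z$ for $0<p<1$ also needs $\epsilon>0$, since both bounds vanish when $\epsilon=0$.
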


\begin{proof}
	If $p=1$, Proposition~\ref{prop:deltaimprove} gives $T_1=T_2$, and the two error bounds coincide under the same range of $\delta_{2k}$. Thus the claim is immediate. In the following, assume that $0<p<1$.

	The numerators of both bounds are identical, reducing the comparison exclusively to their respective denominators. The recovery bound $B_{o}$ is governed by the denominator
$$
		D_{o} = 1 - \delta_{2k}\sqrt{1 + k^{\frac{2}{\min(q,2)} - \frac{2q+2-2p}{q}} \Psi^2},
$$
	In contrast, the recovery bound $B_{z}$ is governed by the denominator
	\begin{equation*}
		D_{z} = 1 - \delta_{2k}\sqrt{1 + 3^{2-2p} k^{\frac{2}{\min(q,2)} - \frac{2-2p}{q}} \left( \beta(1 + z_0)k^{-1/p} + k^{-1/q} \right)^{2p}}.
	\end{equation*}
	Define an auxiliary variable
	\begin{equation}\label{eq:zhu_penalty_extracted}
		\Phi_{z}^2 := [ 3^{\frac{1-p}{p}} \left( \beta(1 + z_0) k^{\frac{p-q}{pq}} + 1 \right) ]^{2p}. 
	\end{equation}
	Factoring out $k^{\frac{q-p}{q}}$ from $\Psi^2$ yields
$$
		\Psi^2 = [ \beta^p(1 + z_0^p) + k^{\frac{q-p}{q}} ]^2 = k^{\frac{2q-2p}{q}} [ \beta^p(1 + z_0^p) k^{\frac{p-q}{q}} + 1 ]^2.
$$
	Substituting the factorized form of $\Psi^2$ and combining the powers of $k$ yields
	\begin{equation}\label{eq:our_term_simplified}
		k^{\frac{2}{\min(q,2)} - \frac{2q+2-2p}{q}} \Psi^2 = k^{\frac{2}{\min(q,2)} - \frac{2}{q}} [ \beta^p(1 + z_0^p) k^{\frac{p-q}{q}} + 1 ]^2.
	\end{equation}
	Similarly, multiplying the factor $k^{\frac{2}{\min(q,2)} - \frac{2-2p}{q}}$ by $\Phi_{z}$ from \eqref{eq:zhu_penalty_extracted} yields
	\begin{equation}\label{eq:zhu_term_simplified}
		\begin{aligned}
			k^{\frac{2}{\min(q,2)} - \frac{2-2p}{q}}\Phi_{z}^2 &= k^{\frac{2}{\min(q,2)} - \frac{2-2p}{q}} \cdot 3^{2-2p} k^{\frac{2p}{q}} ( \beta(1 + z_0)k^{-1/p} + k^{-1/q} )^{2p} \\
			&= k^{\frac{2}{\min(q,2)} - \frac{2}{q}} \cdot 3^{2-2p} [ \beta(1 + z_0) k^{\frac{p-q}{pq}} + 1 ]^{2p}. 
		\end{aligned}
	\end{equation}
	Since the factor $k^{\frac{2}{\min(q,2)} - \frac{2}{q}}$ appears in both \eqref{eq:our_term_simplified} and \eqref{eq:zhu_term_simplified}, it suffices to compare $\Lambda_{o} = [ \beta^p(1 + z_0^p) k^{\frac{p-q}{q}} + 1 ]^2$ and $\Lambda_{z} =[ \beta(1 + z_0) k^{\frac{p-q}{pq}} + 1 ]^{2p}$. Let $x := \beta(1+z_0) k^{\frac{p-q}{pq}} > 0$ be an auxiliary variable. Then $\Lambda_{z} = (1+x)^{2p}$. The inequality $(a+b)^p \ge 2^{p-1}(a^p+b^p)$ for $p \in (0,1]$ implies $1+z_0^p \le 2^{1-p}(1+z_0)^p$. Consequently,
$$
		\beta^p(1+z_0^p) k^{\frac{p-q}{q}} \le 2^{1-p} \beta^p(1+z_0)^p k^{\frac{p-q}{q}} = 2^{1-p} x^p,
$$
	and hence $\Lambda_{o} \le \bigl( 1 + 2^{1-p} x^p \bigr)^2$.
	It remains to show that for all $x > 0$ and $0 < p < 1$,
	\begin{equation*}
		\frac{\left(1 + 2^{1-p}x^p\right)^2}{(1+x)^{2p}} \le 3^{2-2p} \quad\Longleftrightarrow\quad \frac{1 + 2^{1-p}x^p}{(1+x)^p} \le 3^{1-p}.
	\end{equation*}
	Let us define the univariate function $f(x) = \frac{1 + 2^{1-p} x^p}{(1+x)^p}$ for $x > 0$. 
	Taking the derivative with respect to $x$, one has
	\begin{equation}\label{eq:derivativef}
		f'(x) = \frac{p 2^{1-p} x^{p-1} (1+x)^p - p (1 + 2^{1-p} x^p) (1+x)^{p-1}}{(1+x)^{2p}}.
	\end{equation}
	Setting the numerator of \eqref{eq:derivativef} to zero to find the critical points yields
	\begin{equation*}
		2^{1-p} x^{p-1} (1+x) = 1 + 2^{1-p} x^p \implies 2^{1-p} x^{p-1} + 2^{1-p} x^p = 1 + 2^{1-p} x^p \implies x^{1-p} = 2^{1-p}.
	\end{equation*}
	Since $p \in (0, 1)$, the exponent $p-1$ is strictly negative. Therefore, $g(x) = 2^{1-p} x^{p-1} - 1$ is a strictly monotonically decreasing function of $x$ on $(0, +\infty)$. 
	Setting $g(x) = 0$ to find the unique critical point $x = 2$. Because $g(x)$ is strictly decreasing, we have $g(x) > 0 \implies f'(x) > 0$ for all $x \in (0, 2)$, and $g(x) < 0 \implies f'(x) < 0$ for all $x \in (2, +\infty)$. 
	That is, the function $f(x)$ is strictly increasing on $(0, 2)$ and strictly decreasing on $(2, +\infty)$. This implies a unique global maximum exists precisely at $x = 2$. At the global maximum $x = 2$, the function $f(x)$ takes the value
$$
		f(2) = \frac{1 + 2^{1-p} 2^p}{(1+2)^p} = \frac{1 + 2}{3^p} = 3^{1-p}. 
$$
	It follows that $f(x)$ is bounded above by $3^{1-p}$ on $(0,\infty)$, with the maximum attained uniquely at $x = 2$. Squaring this result, we have
	\begin{equation*}
		\frac{\Lambda_{o}}{\Lambda_{z}} \le \left( 3^{1-p} \right)^2 = 3^{2-2p} \implies \Lambda_{o} \le 3^{2-2p} \Lambda_{z}. 
	\end{equation*}
	It follows universally that $B_{o} \leq B_{z}$. This completes the proof.
\end{proof}
Figure~\ref{fig:recoveryboubd_comp} provides a comparison of the upper bounds of two recovery errors across different sparsity levels.
\begin{figure}[htbp]
	\centering
	\includegraphics[scale=0.35]{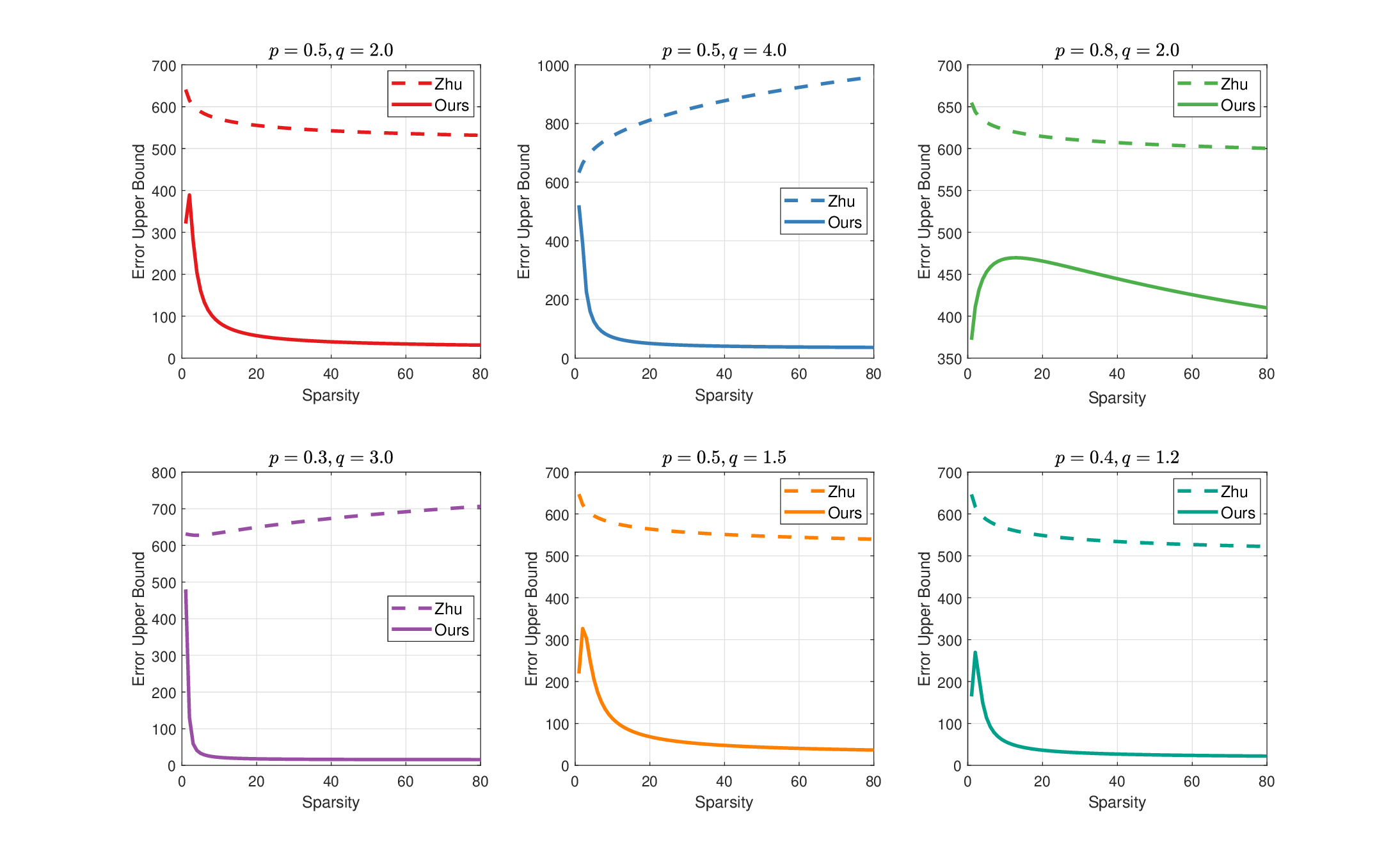}  
	\caption{Upper bounds of two recovery errors under different sparsity levels.}
	\label{fig:recoveryboubd_comp}
\end{figure}

\subsection{Extreme Cases for the RIP Sufficient Condition}
 
In this subsection, we investigate the limiting behavior of the RIP sufficient condition in Theorem~\ref{thm:RIPstablerecovery}. The purpose is twofold. First, we follow the same worst-case sparsity-based normalization used in \cite{zhu2026stable}. Second, we further discuss the bounded norm-ratio regime, which more clearly reveals the improvement of the proposed condition. In the following, we discuss and analyze the limiting cases under the two scenarios.
 
First, the worst-case sparsity-based normalization. For a nonzero $k$-sparse vector $\bm{x}$, the standard norm inequality gives $\frac{\|\bm{x}\|_p}{\|\bm{x}\|_q} \le k^{\frac1p-\frac1q}$. Following the discussion in \cite{zhu2026stable}, we first take $\beta=k^{\frac1p-\frac1q}$. Then $\beta^p k^{-\frac{q-p}{q}}=1$, and hence the auxiliary equation in Lemma~\ref{lem:lemma2} becomes $z^q-z^p-2=0$. Moreover, $\Psi=k^{\frac{q-p}{q}}(2+z_0^p)$. Sybstituting this into $\Psi$ and $T_2$ yields $T_2=k^{\frac{2}{\min\{q,2\}}-\frac{2}{q}}(2+z_0^p)^2$. Similarly, \eqref{eq:zhuRIC} becomes $T_1	= 3^{2-2p} k^{\frac{2}{\min\{q,2\}}-\frac{2}{q}} (2+z_0)^{2p}$.
 
 \textbf{Case (1): $p\to0^+$ and $q\to1^+$.}
 Since $1<q\le2$ in this regime, $\min\{q,2\}=q$, and hence $\frac{2}{\min\{q,2\}}-\frac{2}{q}=0$. The root $z_0$ of equation $z^q-z^p-2=0$ satisfies $z_0 \to 3$ and $z_0^p\to1$. Therefore, $T_1 \to 9$. Thus the improved sufficient condition tends to $\delta_{2k}<\frac{1}{\sqrt{10}}.$ In the same limit, $T_2 \to 9$, so the limiting sufficient condition of \cite[Theorem~1]{zhu2026stable} is the same. This shows that, under the worst-case normalization of $\beta$, the strict finite-parameter improvement may disappear in this singular limit.
 
 \textbf{Case (2): $p\to0^+$ and $q\to\infty$.}
 Here $\min\{q,2\} = 2$, and hence $\frac{2}{\min\{q,2\}}-\frac{2}{q} \to 1$. From the auxiliary equation, one has $z_0\to1^+$ and $z_0^p\to1$. It follows that $T_{\rm new}(p,q,k)\to9k$. Therefore, $\delta_{2k}<\frac{1}{\sqrt{1+9k}}$. The original quantity $T_1$ has the same limiting value $9k$. Thus, in this worst-case scaling, both conditions have the same leading asymptotic form.
 
 \textbf{Case (3): $p=1$ and $q\to1^+$.}
 When $p=1$, the auxiliary equation becomes $z^q-z-2=0$. As $q \to 1^+$, its unique positive root satisfies $z_0\to+\infty$. Consequently, both $T_{\rm new}(1,q,k)$ and $T_{\rm Zhu}(1,q,k)$ diverge, and the corresponding upper bounds on $\delta_{2k}$ tend to zero. This limiting regime is
 therefore not useful for stable recovery.
 
 \textbf{Case (4): $p=1$ and $q\to\infty$.}
 In this case, $z_0\to1^+$, and $T_{\rm new}(1,q,k)\to9k$. Therefore, $\delta_{2k}<\frac{1}{\sqrt{1+9k}}$. Since $p=1$, Proposition~\ref{prop:deltaimprove} gives equality between the two RIP quantities, and the limiting condition coincides with that of \cite[Theorem~1]{zhu2026stable}.
 
 Second, the bounded norm-ratio regime. The preceding discussion follows the worst-case sparsity-based choice of $\beta$. To reveal the improvement of the proposed condition more explicitly, we now consider the regime where the norm-ratio parameter remains bounded as $p \to 0^+$. More precisely, assume that $\beta$ is fixed, or more generally that $\beta^p\to1$ as $p\to0^+$. This regime corresponds to signals whose norm-ratio parameter does not grow at the worst-case rate $k^{1/p-1/q}$.
 
 \textbf{Case (5): $p \to 0^+$ and $q \to 1^+$ with bounded $\beta$.}
 Since $\beta^p\to1$, and $k^{-\frac{q-p}{q}}\to k^{-1}$, the auxiliary equation in Lemma~\ref{lem:lemma2} tends to $z-k^{-1}-k^{-1}-1=0$. Thus $z_0\to1+\frac{2}{k}$ and $z_0^p\to1$. Moreover, $\Psi\to k+2$. Since $\frac{2}{\min\{q,2\}}-\frac{2q+2-2p}{q}\to-2$, we obtain $T_2 \to k^{-2}(k+2)^2 = \left(1+\frac{2}{k}\right)^2$. Hence the improved RIP condition tends to
$$
	 \delta_{2k}<\frac{1}{\sqrt{1+\left(1+\frac{2}{k}\right)^2}}.
$$
 In particular, as $k\to\infty$, $\delta_{2k}<\frac{1}{\sqrt{2}}$. On the other hand, the corresponding quantity in \cite[Theorem~1]{zhu2026stable} satisfies $T_1 \to 9$, so that the limiting condition is $\delta_{2k}<\frac{1}{\sqrt{10}}$. Thus, in the bounded norm-ratio regime, the improved condition gives a
 substantially larger upper bound on $\delta_{2k}$.
 
 \textbf{Case (6): $p\to0^+$ and $q\to\infty$ with bounded $\beta$.}
 In this regime, $\beta^p \to 1$ and $k^{-\frac{q-p}{q}}\to k^{-1}$. The auxiliary equation asymptotically becomes $z^q-\frac{2}{k}-1=0$, and hence $z_0\to1^+$, and $z_0^p\to1$. Again, $\Psi \to k+2$. Since $\frac{2}{\min\{q,2\}}-\frac{2q+2-2p}{q}\to-1$, we get $T_2 \to k^{-1}(k+2)^2 = k+4+\frac{4}{k}$. Therefore,
$$
	 \delta_{2k} < \frac{1}{\sqrt{k+5+4/k}}.
$$
 For large $k$, this behaves as $\delta_{2k}\lesssim \frac{1}{\sqrt{k}}$. By contrast, the corresponding limiting condition in \cite[Theorem~1]{zhu2026stable} is $\delta_{2k} < \frac{1}{\sqrt{1+9k}} \sim \frac{1}{3\sqrt{k}}$. Hence, in this bounded norm-ratio regime, the upper bound on $\delta_{2k}$ given by the improved condition is asymptotically three times larger than that of \cite{zhu2026stable}.
 
 The above analysis shows that the proposed RIP sufficient condition is always at least as mild as that in \cite[Theorem~1]{zhu2026stable}. Under the worst-case sparsity-based normalization, the leading constants may coincide in singular limits. However, for bounded norm-ratio signal classes, the improvement remains visible even in the extreme regimes $p\to 0^+$, yielding substantially larger upper bounds on $\delta_{2k}$.

\section{Stable recovery guarantee without $k$-sparse assumption}\label{sec:StablerecoverywithoutK}
The recovery results in the preceding section are established under the assumption that the true signal is exactly $k$-sparse. This assumption is mathematically useful, but it is often too restrictive in applications where the sparsity level is unknown in advance or the signal is only compressible. In such cases, the integer $k$ should be regarded as an analysis parameter rather than the exact number of nonzero entries, and the recovery error should naturally depend on the best $k$-term approximation tail $\bm{x}_{-\max(k)}$ as well as the noise level $\epsilon$.

We therefore develop stable recovery guarantees for the $\ell_p^p/\ell_q^p$ model without imposing the $k$-sparse assumption on the true signal. The starting point is the RIP--ROP framework of Zhu et al.~\cite[Theorem~2]{zhu2026stable}. Their result already covers the non-$k$-sparse setting, but for $1<q<2$, Zhu et al.'s proof uses the finite-dimensional estimate $\|\bm{h}\|_q \le n^{1/q-1/2}\|\bm{h}\|_2$ for the full error vector $\bm{h}$. Our proof avoids this full-vector estimate by applying the same norm relation on localized blocks whose cardinalities are controlled by $k$ and $t$. This yields a dimension-free RIP--ROP recovery bound for all $q>1$. We then give a related RIP-only formulation by controlling the disjoint-support cross terms through the RIC $\delta_{k+t}$, which provides a condition stated solely in terms of RIP constants.

\subsection{Main result based on RIP and ROP}

This subsection first treats the non-$k$-sparse case within the RIP--ROP framework. The integer $k$ is not assumed to be the true sparsity level of $\bm{x}$; instead, it specifies the leading part of the recovery error and the approximation tail $\bm{x}_{-\max(k)}$. The additional integer $t$ is used to partition the remaining error into ordered blocks. This block structure replaces the full-vector estimate used in \cite[Theorem~2]{zhu2026stable} by separate estimates on individual blocks: for each block, we first control its error using the $\ell_q$ norm and then translate this control into an $\ell_2$ error bound, which removes the explicit ambient-dimension factor in the range $1<q<2$. The RIP--ROP theorem below gives a sufficient condition based on $k$-RIP and $(k,t)$-ROP, expressed through the constants $\delta_k$ and $\theta_{k,t}$. It also provides a stable error bound depending on $\|\bm{x}_{-\max(k)}\|_p$ and $\epsilon$. We first recall the auxiliary estimates needed for the proof and then introduce the localized block $\ell_q$--$\ell_2$ estimate in Lemma \ref{lem:block_lq_l2}.
\begin{lemma}[{\cite[Lemma~3]{zhu2026stable}}]\label{lem:zhu_tail_lp}
	Assume that $\bm{x}\neq\bm{0}$ belongs to $\{ \bm{z} \mid \| \bm{b} - \bm{A} \bm{z} \|_2 \leq \epsilon \}$ and satisfies $\frac{\| \bm{x} \|_p}{\| \bm{x} \|_q} \leq \beta$ with $0 < p \leq 1$, $q > 1$ and $\beta > 0$. Let $\bm{\hat{x}}$ be a solution to $\mathcal{Q}_{p,q}^{\epsilon}$, $\bm{h} := \bm{\hat{x}} - \bm{x}$, $k \in (0, n]$ be an integer. Then, one has
$$
		\| \bm{h}_{-\max(k)} \|_p^p \leq \| \bm{h}_{\max(k)} \|_p^p + 2 \| \bm{x}_{-\max(k)} \|_p^p + \beta^p \| \bm{h} \|_q^p.
$$
\end{lemma}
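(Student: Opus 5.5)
The plan is to start from the optimality of $\hat{\bm{x}}$ and reduce everything to scalar inequalities for the $p$-th powers, as in the proof of Lemma~\ref{lem:lemma1}. Since $\bm{x}$ is feasible for $\mathcal{Q}_{p,q}^{\epsilon}$ and $\hat{\bm{x}}=\bm{x}+\bm{h}$ is a minimizer, we have
\[
\frac{\|\bm{x}+\bm{h}\|_p^p}{\|\bm{x}+\bm{h}\|_q^p}\le \frac{\|\bm{x}\|_p^p}{\|\bm{x}\|_q^p}.
\]
Equivalently, $\|\bm{x}+\bm{h}\|_p^p\le \frac{\|\bm{x}\|_p^p}{\|\bm{x}\|_q^p}\,\|\bm{x}+\bm{h}\|_q^p$. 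The triangle inequality for $\|\cdot\|_q$ ($q>1$) and the subadditivity of $t\mapsto t^p$ give $\|\bm{x}+\bm{h}\|_q^p\le \|\bm{x}\|_q^p+\|\bm{h}\|_q^p$. Combining this with the hypothesis $\|\bm{x}\|_p^p\le\beta^p\|\bm{x}\|_q^p$, the right-hand side is at most $\|\bm{x}\|_p^p+\beta^p\|\bm{h}\|_q^p$. This produces the key inequality
\[
\|\bm{x}+\bm{h}\|_p^p\le \|\bm{x}\|_p^p+\beta^p\|\bm{h}\|_q^p .
\]

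Next, I would lower-bound the left-hand side by a support splitting, now without sparsity of $\bm{x}$. Let $T$ be the index set of the $k$ largest entries of $\bm{h}$ in magnitude, so that $\bm{h}_T=\bm{h}_{\max(k)}$. Using $|a+b|^p\ge |a|^p-|b|^p$ on $T$ and $|a+b|^p\ge |b|^p-|a|^p$ on $T^c$, we get
\[
\|\bm{x}+\bm{h}\|_p^p\ge \|\bm{x}_T\|_p^p-\|\bm{h}_T\|_p^p+\|\bm{h}_{T^c}\|_p^p-\|\bm{x}_{T^c}\|_p^p .
\]
Writing $\|\bm{x}\|_p^p=\|\bm{x}_T\|_p^p+\|\bm{x}_{T^c}\|_p^p$ and substituting into the key inequality yields
\[
\|\bm{h}_{-\max(k)}\|_p^p\le \|\bm{h}_{\max(k)}\|_p^p+2\|\bm{x}_{T^c}\|_p^p+\beta^p\|\bm{h}\|_q^p .
\]

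The main obstacle is that $T$ is determined by $\bm{h}$, not by $\bm{x}$, so $\|\bm{x}_{T^c}\|_p^p$ is not yet the best-$k$-term tail $\|\bm{x}_{-\max(k)}\|_p^p$. I would repeat the argument with $T_0$, the support of the $k$ largest entries of $\bm{x}$, in place of $T$. This gives
\[
\|\bm{h}_{T_0^c}\|_p^p\le \|\bm{h}_{T_0}\|_p^p+2\|\bm{x}_{-\max(k)}\|_p^p+\beta^p\|\bm{h}\|_q^p .
\]
Then I would use the standard swapping fact that for any set $S$ with $|S|\le k$ one has $\|\bm{h}_{S}\|_p^p\le\|\bm{h}_{\max(k)}\|_p^p$ and $\|\bm{h}_{S^c}\|_p^p\ge\|\bm{h}_{-\max(k)}\|_p^p$. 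Applying this to $S=T_0$ converts the previous display into the claimed inequality.
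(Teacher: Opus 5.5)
Your proof is correct. Optimality of $\hat{\bm{x}}$ together with $\|\bm{x}+\bm{h}\|_q^p\le\|\bm{x}\|_q^p+\|\bm{h}\|_q^p$ and $\|\bm{x}\|_p^p\le\beta^p\|\bm{x}\|_q^p$ gives $\|\bm{x}+\bm{h}\|_p^p\le\|\bm{x}\|_p^p+\beta^p\|\bm{h}\|_q^p$. Splitting over the top-$k$ support $T_0$ of $\bm{x}$ then bounds $\|\bm{h}_{T_0^c}\|_p^p$, and the swap $\|\bm{h}_{-\max(k)}\|_p^p\le\|\bm{h}_{T_0^c}\|_p^p$, $\|\bm{h}_{T_0}\|_p^p\le\|\bm{h}_{\max(k)}\|_p^p$ finishes the argument.

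This is the standard route. The paper does not prove the lemma itself; it imports it from \cite{zhu2026stable}, and its proof of Theorem~\ref{thm:exact_recovery} uses the same two inequalities. Your first pass with $T$ chosen from $\bm{h}$ is an unnecessary detour, since it yields $\|\bm{x}_{T^c}\|_p^p\ge\|\bm{x}_{-\max(k)}\|_p^p$, which points the wrong way. You can start directly from $T_0$.
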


\begin{lemma}[{\cite[Lemma~5.5]{cai2013compressed}}]\label{lem:cai_zhang_tail}
	Suppose $n \geq k$, $u \geq 0$, $v_1 \geq v_2 \geq \cdots \geq v_n$, and $\sum_{i=1}^k v_i + u \geq \sum_{i=k+1}^n v_i$, then for all $\rho \geq 1$,
$$
		\sum_{i=k+1}^n v_i^\rho \leq k ( ( \frac{\sum_{i=1}^k v_i^\rho}{k} )^{1/\rho} + \frac{u}{k} )^\rho.
$$
\end{lemma}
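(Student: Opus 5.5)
The plan is a short direct argument based on comparing power means; throughout, the $v_i$ are taken nonnegative, as they are in every application in this paper (magnitudes of entries of an error vector). Set
$$
S:=\sum_{i=1}^k v_i,\qquad P:=\sum_{i=1}^k v_i^\rho,\qquad m_\rho:=\Bigl(\frac{P}{k}\Bigr)^{1/\rho}.
$$
With this notation, the claim reads $\sum_{i=k+1}^n v_i^\rho\le k\,(m_\rho+u/k)^\rho$.

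\textbf{Step 1: reduce the $\rho$-th power sum of the tail to its plain sum.} Because the sequence is nonincreasing, every tail entry satisfies $v_i\le v_k$ for $i\ge k+1$. Since $\rho\ge1$, we have $v_i^{\rho}=v_i^{\rho-1}v_i\le v_k^{\rho-1}v_i$. Summing over the tail and using the hypothesis $\sum_{i>k}v_i\le S+u$ gives
$$
\sum_{i=k+1}^n v_i^\rho\le v_k^{\rho-1}\sum_{i=k+1}^n v_i\le v_k^{\rho-1}(S+u).
$$

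\textbf{Step 2: bound the head quantities by the $\rho$-power mean $m_\rho$.} There are two elementary comparisons.
\begin{enumerate}
\item The smallest head entry is at most any power mean of the head, so $v_k=\min_{i\le k}v_i\le m_\rho$. Hence $v_k^{\rho-1}\le m_\rho^{\rho-1}$, using $\rho-1\ge0$.
\item The arithmetic mean is at most the $\rho$-power mean for $\rho\ge1$ (Jensen, or H\"older in the form $S\le k^{1-1/\rho}P^{1/\rho}$), so $S\le k\,m_\rho$.
\end{enumerate}
Inserting both into the bound from Step 1 yields
$$
\sum_{i=k+1}^n v_i^\rho\le m_\rho^{\rho-1}(k\,m_\rho+u)=k\,m_\rho^{\rho-1}\Bigl(m_\rho+\frac uk\Bigr).
$$

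\textbf{Step 3: conclude.} Since $u\ge0$, we have $m_\rho\le m_\rho+u/k$. Therefore $m_\rho^{\rho-1}\le (m_\rho+u/k)^{\rho-1}$, and the right-hand side of Step 2 is at most $k\,(m_\rho+u/k)^\rho$, which is exactly the claimed bound.

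There is no real obstacle here. The only point requiring care is the order of the steps: the factor $v_i^{\rho-1}$ in Step 1 must be bounded by the \emph{head} quantity $v_k$ before the hypothesis on the tail sum is applied. This is what allows the additive slack $u$ to enter only linearly, which is the form the bound needs.
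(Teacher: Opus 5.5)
Your proof is correct and complete. The paper gives no proof of this lemma; it is quoted from Cai and Zhang. So the natural comparison is with the usual argument for this inequality.

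The usual argument first proves the case $u=0$, where the claim reads $\sum_{i>k}v_i^\rho\le\sum_{i\le k}v_i^\rho$. It then handles general $u$ by replacing the head entries $v_i$, $i\le k$, with $v_i+u/k$. This keeps the ordering and turns the hypothesis into the $u=0$ one, giving $\sum_{i>k}v_i^\rho\le\sum_{i\le k}(v_i+u/k)^\rho$. Minkowski's inequality in $\ell_\rho^k$ then finishes.

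Your route needs neither the reduction nor Minkowski. You bound $v_i^{\rho-1}$ by the head quantity $v_k^{\rho-1}$ before using the tail-sum hypothesis, then use $v_k\le m_\rho$ and $S\le k\,m_\rho$. This gives directly the slightly sharper intermediate estimate $\sum_{i>k}v_i^\rho\le m_\rho^{\rho-1}(k\,m_\rho+u)$, which you relax to $k(m_\rho+u/k)^\rho$ only in the last step. The shift-plus-Minkowski route is more modular. Yours is shorter and keeps the better intermediate constant.

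You were also right to assume $v_i\ge 0$. This is a necessary hypothesis, not merely a convenience, and the paper's transcription of the lemma omits $v_n\ge 0$. Without it the claim fails: take $k=1$, $n=2$, $\rho=2$, $u=0$, $v_1=0$, $v_2=-5$, which would require $25\le 0$. In addition, $v_i^\rho$ is undefined for negative $v_i$ when $\rho$ is not an integer. In the paper's only use of the lemma, the $v_i$ are powers of absolute values of entries of the error vector, so nonnegativity holds there.
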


\begin{lemma}[{\cite[Proposition~2.1]{cai2010newbounds}}]\label{lem:lem5}
	For any $\bm{a} \in \mathbb{R}^n$,
$$
		\| \bm{a} \|_2 - \frac{\| \bm{a} \|_1}{\sqrt{n}} \leq \frac{\sqrt{n}}{4} \left( \max_{1 \leq i \leq n} |a_i| - \min_{1 \leq i \leq n} |a_i| \right).
$$
\end{lemma}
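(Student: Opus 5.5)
The inequality compares $\|\bm a\|_2$ with the normalized $\ell_1$ norm. Both sides depend only on the moduli $|a_i|$, so we may assume $a_i\ge 0$. Set $M:=\max_i a_i$ and $m:=\min_i a_i$. If $M=0$ then $\bm a=\bm 0$ and there is nothing to prove, so assume $M>0$. Let $s:=\|\bm a\|_1/n$ be the mean entry, so $m\le s\le M$.

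Dividing the claim by $\sqrt n$, it reads
\[
\frac{\|\bm a\|_2}{\sqrt n}-s\le \frac{M-m}{4}.
\]
The plan has two steps: bound the quadratic mean in terms of $s$, $M$, $m$ alone, then maximize the resulting one-variable function of $s$.

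\emph{Step 1 (quadratic mean).} Since every entry lies in $[m,M]$, we have $(M-a_i)(a_i-m)\ge 0$ for each $i$. Summing over $i$ and dividing by $n$ gives
\[
\frac{\|\bm a\|_2^2}{n}\le (M+m)s-Mm .
\]
Therefore
\[
\frac{\|\bm a\|_2}{\sqrt n}-s\le g(s):=\sqrt{(M+m)s-Mm}-s .
\]
The radicand equals $s^2+(s-m)(M-s)\ge 0$, so $g$ is well defined on $[m,M]$.

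\emph{Step 2 (maximizing $g$).} The function $g$ is concave in $s$, being a square root of an affine function minus a linear term. It suffices to bound $g$ by its unconstrained maximum. Setting $g'(s)=0$ gives $\sqrt{(M+m)s-Mm}=(M+m)/2$, that is,
\[
s=\frac{(M+m)^2/4+Mm}{M+m}.
\]
At this point
\[
\max g=\frac{M+m}{2}-\frac{M+m}{4}-\frac{Mm}{M+m}=\frac{M+m}{4}-\frac{Mm}{M+m}.
\]
It remains to show this is at most $\frac{M-m}{4}$, which is equivalent to $\frac{Mm}{M+m}\ge \frac m2$. That holds because $2M\ge M+m$ and $m\ge 0$. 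Multiplying the resulting inequality by $\sqrt n$ gives the claim.

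The only place I expect care to be needed is Step 2. The cruder estimate $\sqrt{s^2+y}-s\le\sqrt y$, combined with $(s-m)(M-s)\le (M-m)^2/4$, only yields the constant $\tfrac12$. The exact concave maximization, together with the final comparison $\frac{Mm}{M+m}\ge \frac m2$, is what produces the sharp factor $\tfrac14$.
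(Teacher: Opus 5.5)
Your proof is correct and complete. The paper does not prove this lemma; it quotes it from \cite{cai2010newbounds}, so the relevant comparison is with the argument given there. That argument reduces to $a_1\ge\cdots\ge a_n\ge 0$ and studies $f(\bm a)=\|\bm a\|_2-\|\bm a\|_1/\sqrt n$ through the sign of $\partial f/\partial a_i$, which amounts to convexity of $f$ in each coordinate. It concludes that, with $a_1$ and $a_n$ fixed, the maximum is attained at a two-valued vector: $k$ entries equal to $a_1$ and $n-k$ entries equal to $a_n$. It then maximizes the resulting expression over $k$, treated as a continuous variable.

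Your pointwise inequality $(M-a_i)(a_i-m)\ge 0$ does that reduction in one line. It is an equality exactly for entries sitting at $m$ or $M$, and your mean $s$ plays the role of the relaxed count, since $s=m+\frac{k}{n}(M-m)$ for a two-valued vector. The remaining one-variable optimization is therefore essentially the same. What your route buys is that you never have to argue where a maximizer over the box lies, and you never relax an integer parameter.

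It also makes a sharper intermediate bound visible. Your Step 2 value is $\frac{M+m}{4}-\frac{Mm}{M+m}=\frac{(M-m)^2}{4(M+m)}$, so in fact $\|\bm a\|_2-\|\bm a\|_1/\sqrt n\le \frac{\sqrt n\,(M-m)^2}{4(M+m)}$. Your final comparison $\frac{Mm}{M+m}\ge\frac m2$ is just $M-m\le M+m$.

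If you prefer to avoid calculus in Step 2, apply AM--GM in the form $\sqrt X\le \frac{X}{M+m}+\frac{M+m}{4}$ to $X=(M+m)s-Mm$. This yields $g(s)\le \frac{M+m}{4}-\frac{Mm}{M+m}$ directly, with no need to invoke concavity or locate a critical point.
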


\begin{lemma} \label{lem:block_lq_l2}
	Let $T_0 = \text{supp}(\bm{h}_{\max(k)})$ and let $T_i$ ($i \ge 1$) be the sets of indices of the $t$ largest elements in absolute value of $\bm{h}_{(T_0 \cup \cdots \cup T_{i-1})^c}$. Let
$$
		\vartheta_q:=\max\{\frac1q-\frac12,0\}.
$$
	Then, for every $q>1$,
	\begin{equation}\label{eq:lq_block_bound}
		\|\bm{h}\|_q \le k^{\vartheta_q} \|\bm{h}_{\max(k)}\|_2 + t^{\vartheta_q} \sum_{i=1}^J \|\bm{h}_{T_i}\|_2. 
	\end{equation}
\end{lemma}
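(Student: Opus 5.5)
The plan is to split $\bm{h}$ along the block partition $T_0,T_1,\dots,T_J$ and apply the triangle inequality for $\|\cdot\|_q$, which is a genuine norm since $q>1$. Here $J$ is the number of blocks needed to exhaust $[n]$; the last block may have fewer than $t$ entries. The blocks are pairwise disjoint and their union is $[n]$, so $\bm{h}=\bm{h}_{T_0}+\sum_{i=1}^J\bm{h}_{T_i}$ with $\bm{h}_{T_0}=\bm{h}_{\max(k)}$. This gives
\begin{equation*}
\|\bm{h}\|_q\le \|\bm{h}_{\max(k)}\|_q+\sum_{i=1}^J\|\bm{h}_{T_i}\|_q .
\end{equation*}
It therefore suffices to bound each block's $\ell_q$ norm by its $\ell_2$ norm, with a factor depending only on the block cardinality.

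The key local estimate is the following. For any vector $\bm{v}$ supported on a set of cardinality at most $r$,
\begin{equation*}
\|\bm{v}\|_q\le r^{\vartheta_q}\|\bm{v}\|_2 .
\end{equation*}
I would prove it in two cases. If $1<q<2$, H\"older's inequality on the support gives $\|\bm{v}\|_q\le r^{1/q-1/2}\|\bm{v}\|_2$, which is the claim since $\vartheta_q=1/q-1/2$. If $q\ge2$, monotonicity of $\ell_p$ norms gives $\|\bm{v}\|_q\le\|\bm{v}\|_2=r^{0}\|\bm{v}\|_2$, consistent with $\vartheta_q=0$.

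Next I apply this estimate blockwise. The block $\bm{h}_{\max(k)}$ has at most $k$ nonzero entries, so $\|\bm{h}_{\max(k)}\|_q\le k^{\vartheta_q}\|\bm{h}_{\max(k)}\|_2$. Each $\bm{h}_{T_i}$ with $i\ge1$ has $|T_i|\le t$, and $\vartheta_q\ge0$ makes $r\mapsto r^{\vartheta_q}$ nondecreasing, so $\|\bm{h}_{T_i}\|_q\le t^{\vartheta_q}\|\bm{h}_{T_i}\|_2$. Substituting both bounds into the triangle-inequality split yields \eqref{eq:lq_block_bound}.

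There is no serious obstacle here. The only points requiring care are that the possibly smaller last block is handled by the monotonicity of $r^{\vartheta_q}$, and that the constant must depend only on block sizes and never on $n$. That second point is exactly what avoids the factor $n^{1/q-1/2}$ appearing in \cite[Theorem~2]{zhu2026stable}.
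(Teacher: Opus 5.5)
Your proposal is correct and matches the paper's proof: the triangle inequality for $\|\cdot\|_q$ over the disjoint blocks $T_0,T_1,\dots,T_J$, followed by the local estimate $\|\bm{v}\|_q\le r^{\vartheta_q}\|\bm{v}\|_2$ for vectors with at most $r$ nonzero entries, applied with $r=k$ and $r=t$. Your justification of that estimate (H\"older for $1<q<2$, monotonicity of $\ell_p$ norms for $q\ge 2$) and your treatment of the shorter last block only spell out steps the paper calls standard.
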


\begin{proof}
	For $q > 1$, the $\ell_q$-norm is a true norm and satisfies the triangle inequality. Decomposing $\bm{h}$ over the disjoint index blocks, we have
$$
		\|\bm{h}\|_q \le \|\bm{h}_{T_0}\|_q + \sum_{i=1}^J \|\bm{h}_{T_i}\|_q.
$$
	Since $|T_0| \le k$ and $|T_i| \le t$, we use the standard finite-dimensional norm relation
$$
		\|\bm{v}\|_q \le s^{\max\{1/q-1/2,0\}}\|\bm{v}\|_2,\quad \|\bm{v}\|_0\le s.
$$
	It follows that $\|\bm{h}_{T_0}\|_q \le k^{\vartheta_q}\|\bm{h}_{T_0}\|_2$ and $\|\bm{h}_{T_i}\|_q \le t^{\vartheta_q}\|\bm{h}_{T_i}\|_2$. Substituting these local bounds into the triangle inequality and noting that $\bm{h}_{T_0}=\bm{h}_{\max(k)}$ gives \eqref{eq:lq_block_bound}.
\end{proof}

We now present a dimension-free refinement of the non-$k$-sparse recovery guarantee of \cite[Theorem~2]{zhu2026stable}, which relies on both RIP and ROP.

\begin{theorem}[]\label{thm:ImprovedROPRIP}
	Assume that the true signal $\bm{x}\neq\bm{0}$ in $\mathcal{M}\{\bm{A},\bm{x},\bm{b},m,n,\epsilon\}$ satisfies $\frac{\|\bm{x}\|_p}{\|\bm{x}\|_q}\le \beta$ with $0<p\le 1$, $q>1$, and $\beta>0$. Let $\hat{\bm{x}}$ be a solution to $\mathcal{Q}_{p,q}^{\epsilon}$. Let $1\le k<n$ and $1\le t\le n-k$ be integers. Define
	$a_p:=3^{\frac{1-p}{p}}$, $\vartheta_q:=\max\{\frac{1}{q}-\frac{1}{2},\,0\}$,			$\widetilde{\eta}_{p,q}(k,t,\beta):=
			a_p\,\beta\,k^{\frac{p-1}{p}}\,t^{\vartheta_q-\frac12}$,
		$	\widetilde{\tau}_{p,q}(k,t,\beta):=\frac{a_p\sqrt{\frac{k}{t}}+\frac14\sqrt{\frac{t}{k}}+a_p\,\beta\,k^{\frac{p-1}{p}+\vartheta_q}\,t^{-\frac12}}{1-\widetilde{\eta}_{p,q}(k,t,\beta)}$,	
	and $\widetilde{\psi}_{p,q}(k,t,\beta):=\delta_k+\widetilde{\tau}_{p,q}(k,t,\beta)\,\theta_{k,t}$. If $\bm{A}$ satisfies $k$-RIP and $(k,t)$-ROP with $\widetilde{\eta}_{p,q}(k,t,\beta)<1$ and $\widetilde{\psi}_{p,q}(k,t,\beta)<1$, then
$$
		\|\hat{\bm{x}}-\bm{x}\|_2\le \widetilde{C}_1\|\bm{x}_{-\max(k)}\|_p+\widetilde{C}_2\epsilon,
$$
	where
	\begin{equation*}
			\widetilde{C}_1= \frac{a_p\,2^{\frac1p}\,k^{\frac{p-1}{p}}\,t^{-\frac12}\,(1-\delta_k+\theta_{k,t})}{\bigl(1-\widetilde{\eta}_{p,q}(k,t,\beta)\bigr)\bigl(1-\widetilde{\psi}_{p,q}(k,t,\beta)\bigr)},\quad
			\widetilde{C}_2=\frac{2\bigl(1+\widetilde{\tau}_{p,q}(k,t,\beta)\bigr)\sqrt{1+\delta_k}}{1-\widetilde{\psi}_{p,q}(k,t,\beta)}.
	\end{equation*}
\end{theorem}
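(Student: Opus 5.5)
We follow the Cai--Zhang block framework, with $\bm{h}:=\hat{\bm{x}}-\bm{x}$ and $\|\bm{A}\bm{h}\|_2\le 2\epsilon$ by feasibility of both vectors. Let $T_0=\operatorname{supp}(\bm{h}_{\max(k)})$ and let $T_1,T_2,\dots,T_J$ be the successive blocks of the $t$ largest remaining entries, as in Lemma~\ref{lem:block_lq_l2}.

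\textbf{Step 1 (tail control in $\ell_p$).} Start from Lemma~\ref{lem:zhu_tail_lp}, $\|\bm{h}_{-\max(k)}\|_p^p\le\|\bm{h}_{\max(k)}\|_p^p+2\|\bm{x}_{-\max(k)}\|_p^p+\beta^p\|\bm{h}\|_q^p$. Raise to the power $1/p$ using $(a+b+c)^{1/p}\le 3^{\frac{1-p}{p}}(a^{1/p}+b^{1/p}+c^{1/p})$, which is where $a_p=3^{(1-p)/p}$ enters:
\[
\|\bm{h}_{-\max(k)}\|_p\le a_p\bigl(\|\bm{h}_{\max(k)}\|_p+2^{1/p}\|\bm{x}_{-\max(k)}\|_p+\beta\|\bm{h}\|_q\bigr).
\]
Then convert $\ell_p$ control into $\ell_2$ control on blocks. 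Apply Lemma~\ref{lem:lem5} to each block $T_i$, $i\ge1$, together with the ordering $\max_{T_{i+1}}|h|\le\min_{T_i}|h|$; the telescoping gives
\[
\sum_{i\ge1}\|\bm{h}_{T_i}\|_2\le t^{-1/2}\|\bm{h}_{-\max(k)}\|_1+\tfrac{\sqrt t}{4}\|\bm{h}_{\max(k)}\|_\infty .
\]
For $p<1$ we pass from $\ell_1$ to $\ell_p$ blockwise: each block's entries are dominated by the $\ell_p$-mass of the preceding block. A more careful route applies Lemma~\ref{lem:cai_zhang_tail} with $\rho=1/p$ to the sequence $|h_i|^p$. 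Either way, together with $\|\bm{h}_{\max(k)}\|_p\le k^{1/p-1/2}\|\bm{h}_{\max(k)}\|_2$ and $\|\bm{h}_{\max(k)}\|_\infty\le\|\bm{h}_{\max(k)}\|_2$, this should yield
\[
\Sigma:=\sum_{i\ge1}\|\bm{h}_{T_i}\|_2\le \Bigl(a_p\sqrt{\tfrac{k}{t}}+\tfrac14\sqrt{\tfrac{t}{k}}\Bigr)\|\bm{h}_{\max(k)}\|_2+a_p2^{1/p}k^{\frac{p-1}{p}}t^{-1/2}\|\bm{x}_{-\max(k)}\|_p+a_p\beta k^{\frac{p-1}{p}}t^{-1/2}\|\bm{h}\|_q .
\]
This matches the structure of $\widetilde\tau$ and $\widetilde C_1$.

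\textbf{Step 2 (dimension-free closing).} Insert Lemma~\ref{lem:block_lq_l2}, $\|\bm{h}\|_q\le k^{\vartheta_q}\|\bm{h}_{\max(k)}\|_2+t^{\vartheta_q}\Sigma$, into the right-hand side of the Step 1 bound. The $\Sigma$ term reappears with coefficient exactly $\widetilde\eta_{p,q}=a_p\beta k^{\frac{p-1}{p}}t^{\vartheta_q-1/2}<1$, so we move it to the left. This gives
\[
\Sigma\le\widetilde\tau\,\|\bm{h}_{\max(k)}\|_2+\frac{a_p2^{1/p}k^{\frac{p-1}{p}}t^{-1/2}}{1-\widetilde\eta}\|\bm{x}_{-\max(k)}\|_p .
\]
This absorption is the key point replacing the $n^{1/q-1/2}$ factor of \cite{zhu2026stable}.

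\textbf{Step 3 (RIP--ROP).} Write
\[
\langle\bm{A}\bm{h}_{T_0},\bm{A}\bm{h}\rangle=\|\bm{A}\bm{h}_{T_0}\|_2^2+\sum_{i\ge1}\langle\bm{A}\bm{h}_{T_0},\bm{A}\bm{h}_{T_i}\rangle .
\]
The $k$-RIP gives $\|\bm{A}\bm{h}_{T_0}\|_2^2\ge(1-\delta_k)\|\bm{h}_{T_0}\|_2^2$, the $(k,t)$-ROP bounds each cross term by $\theta_{k,t}\|\bm{h}_{T_0}\|_2\|\bm{h}_{T_i}\|_2$, and Cauchy--Schwarz gives $|\langle\bm{A}\bm{h}_{T_0},\bm{A}\bm{h}\rangle|\le 2\epsilon\sqrt{1+\delta_k}\|\bm{h}_{T_0}\|_2$. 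Dividing by $\|\bm{h}_{T_0}\|_2$ yields
\[
(1-\delta_k)\|\bm{h}_{T_0}\|_2\le 2\epsilon\sqrt{1+\delta_k}+\theta_{k,t}\Sigma .
\]
Substituting Step 2 and using $\widetilde\psi<1$ bounds $\|\bm{h}_{T_0}\|_2$. Finally $\|\bm{h}\|_2\le\|\bm{h}_{T_0}\|_2+\Sigma$, which produces the factor $(1+\widetilde\tau)$ in $\widetilde C_2$. For the tail term, the $\|\bm{x}_{-\max(k)}\|_p$ coefficient equals $\frac{c}{1-\widetilde\eta}\bigl(\frac{\theta_{k,t}}{1-\widetilde\psi}+1\bigr)$ with $c=a_p2^{1/p}k^{\frac{p-1}{p}}t^{-1/2}$. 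Since $1-\widetilde\psi+\theta_{k,t}\le 1-\delta_k+\theta_{k,t}$, this coefficient is at most $\widetilde C_1$.

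\textbf{Main obstacle.} The hard part is Step 1's $\ell_1$-to-$\ell_p$ block conversion for $p<1$, which must produce exactly the constants $a_p\sqrt{k/t}$ and $k^{(p-1)/p}t^{-1/2}$. I would first try Lemma~\ref{lem:cai_zhang_tail} applied to $v_i=|h_i|^p$ with $\rho=1/p$ and $u$ collecting the $\bm{x}$-tail and $\beta$ terms. If that does not match the constants, the fallback is the direct block-domination estimate $\|\bm{h}_{T_{i+1}}\|_2\le t^{1/2-1/p}\|\bm{h}_{T_i}\|_p$.
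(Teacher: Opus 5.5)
Your overall architecture is the paper's. It uses the Cai--Zhang blocks, the telescoped Lemma~\ref{lem:lem5} bound, absorption of the $\Sigma$-term via Lemma~\ref{lem:block_lq_l2} with coefficient $\widetilde{\eta}_{p,q}$, and the RIP--ROP two-sided estimate of $\langle\bm{A}\bm{h}_{T_0},\bm{A}\bm{h}\rangle$. Steps~2 and~3 are essentially the paper's argument.

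\textbf{The gap is in Step~1.} The ingredients you actually list do not produce your displayed bound for $\Sigma$, and ``either way'' is false.

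\emph{First}, you replace $\max_{j\in T_1}|h_j|$ by $\|\bm{h}_{\max(k)}\|_\infty\le\|\bm{h}_{\max(k)}\|_2$. This gives $\frac{\sqrt{t}}{4}\|\bm{h}_{\max(k)}\|_2$, a factor $\sqrt{k}$ larger than the $\frac14\sqrt{t/k}$ in $\widetilde{\tau}_{p,q}$. You must keep the sharper chain $\max_{j\in T_1}|h_j|\le\min_{j\in T_0}|h_j|\le\|\bm{h}_{\max(k)}\|_2/\sqrt{k}$.

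\emph{Second}, take $p<1$ and suppose you first raise Lemma~\ref{lem:zhu_tail_lp} to the power $1/p$. If you then use $\|\bm{h}_{-\max(k)}\|_1\le\|\bm{h}_{-\max(k)}\|_p$ and $\|\bm{h}_{\max(k)}\|_p\le k^{1/p-1/2}\|\bm{h}_{\max(k)}\|_2$, every coefficient is too large by $k^{1/p-1}$. You get $a_pk^{1/p-1/2}t^{-1/2}$, $a_p2^{1/p}t^{-1/2}$ and $a_p\beta t^{-1/2}$. The block-domination fallback instead gives $t^{1/2-1/p}$-type constants; for example, the absorbed coefficient becomes $a_p\beta t^{\vartheta_q+1/2-1/p}$ rather than $\widetilde{\eta}_{p,q}$. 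Either way you would prove a bound with different constants, not this theorem.

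\textbf{The fix.} The only route that yields the stated constants is the one you call ``more careful'', and it is exactly what the paper does. The order of operations matters.
\begin{enumerate}
\item Apply Lemma~\ref{lem:cai_zhang_tail} with $v_i=|h|_{(i)}^p$, $\rho=1/p$ and slack $u=2\|\bm{x}_{-\max(k)}\|_p^p+\beta^p\|\bm{h}\|_q^p$. Apply it directly to the $p$-th power inequality of Lemma~\ref{lem:zhu_tail_lp}, before taking any $1/p$ power. This gives $\|\bm{h}_{-\max(k)}\|_1\le k\bigl((\|\bm{h}_{\max(k)}\|_1/k)^p+u/k\bigr)^{1/p}$.
\item Only then use the three-term inequality with $a_p$.
\item Bound the head by $\|\bm{h}_{\max(k)}\|_1\le\sqrt{k}\,\|\bm{h}_{\max(k)}\|_2$, the $\ell_1$ version rather than the $\ell_p$ one.
\end{enumerate}
The factor $k\cdot k^{-1/p}=k^{(p-1)/p}$ in $\widetilde{\eta}_{p,q}$ and $\widetilde{C}_1$ comes from exactly this ordering. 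Your opening display, which raises Lemma~\ref{lem:zhu_tail_lp} to the power $1/p$ first, should be dropped.

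\textbf{A slip in Step~3.} Since $\|\bm{h}\|_2\le(1+\widetilde{\tau})\|\bm{h}_{T_0}\|_2+\frac{c}{1-\widetilde{\eta}}\|\bm{x}_{-\max(k)}\|_p$, the tail coefficient is $\frac{c}{1-\widetilde{\eta}}\bigl(1+\frac{(1+\widetilde{\tau})\theta_{k,t}}{1-\widetilde{\psi}}\bigr)$. You wrote $\frac{c}{1-\widetilde{\eta}}\bigl(1+\frac{\theta_{k,t}}{1-\widetilde{\psi}}\bigr)$, dropping the factor $(1+\widetilde{\tau})$. The correct coefficient equals $\widetilde{C}_1$ exactly, by the identity $1-\widetilde{\psi}+(1+\widetilde{\tau})\theta_{k,t}=1-\delta_k+\theta_{k,t}$. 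So the conclusion holds, but your ``at most'' step was applied to the wrong expression.
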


\begin{proof}
	Let $\bm{h} := \hat{\bm{x}} - \bm{x}$. If $\bm{h} = \bm{0}$, then the conclusion is trivial. Hence we only consider the case $\bm{h} \neq \bm{0}$.
	
	Let $T_0 := \operatorname{supp}(\bm{h}_{\max(k)})$, and let $T_i$ $(i\ge 1)$ be the sets of indices of the $t$ largest elements in absolute value of $\bm{h}_{(T_0\cup\cdots\cup T_{i-1})^c}$. Let $J = \lceil (n-k)/t \rceil$ be the total number of such tail blocks; the last block is allowed to have cardinality smaller than $t$.
	For notational brevity, we define $S := \sum_{i=1}^J \|\bm{h}_{T_i}\|_2$, $ u := \|\bm{h}_{\max(k)}\|_2$,  and $\sigma := \|\bm{x}_{-\max(k)}\|_p$. Since $\bm{h}\neq\bm{0}$ and $k\ge1$, one has $u>0$. It follows from Lemma~\ref{lem:zhu_tail_lp} that
	\begin{equation*}
		  \|\bm{h}_{-\max(k)}\|_p^p \le \|\bm{h}_{\max(k)}\|_p^p +2\|\bm{x}_{-\max(k)}\|_p^p + \beta^p\|\bm{h}\|_q^p:= u_0.
	\end{equation*}
	Applying Lemma~\ref{lem:cai_zhang_tail} with $\rho=\frac{1}{p}$ to bridge the $\ell_p$ and $\ell_1$ norms, we obtain
	\begin{equation*}
		\|\bm{h}_{-\max(k)}\|_1 \le k( (\frac{\|\bm{h}_{\max(k)}\|_1}{k})^p + \frac{u_0}{k})^{\frac{1}{p}}.
	\end{equation*}
	Using the $p$-norm subadditivity inequality $(A+B+C)^{\frac{1}{p}} \le 3^{\frac{1-p}{p}}\left(A^{\frac{1}{p}}+B^{\frac{1}{p}}+C^{\frac{1}{p}}\right)$, and substituting $u_0$, one has
$$
		\|\bm{h}_{-\max(k)}\|_1 \le k \cdot 3^{\frac{1-p}{p}} ( \frac{\|\bm{h}_{\max(k)}\|_1}{k} + \frac{2^{\frac{1}{p}}\sigma}{k^{\frac{1}{p}}} + \frac{\beta\|\bm{h}\|_q}{k^{\frac{1}{p}}} ).
$$
	Together with the standard Cauchy-Schwarz bound $\|\bm{h}_{\max(k)}\|_1 \le \sqrt{k}\,\|\bm{h}_{\max(k)}\|_2 = \sqrt{k}u$, and recalling $a_p := 3^{\frac{1-p}{p}}$, we obtain the foundational $\ell_1$ tail bound
$$
		\|\bm{h}_{-\max(k)}\|_1 \le a_p\sqrt{k}\,u + a_p\,k^{\frac{p-1}{p}} (2^{\frac1p}\sigma + \beta\|\bm{h}\|_q).
$$
	Applying Lemma~\ref{lem:lem5} to each tail block, with the final block padded by zeros if its cardinality is smaller than $t$, gives
	\begin{equation}
		S \leq \frac{1}{\sqrt{t}} \sum_{i=1}^{J} \| \bm{h}_{T_i} \|_1 + \frac{\sqrt{t}}{4} \max_{j \in T_1} |h_j| \leq \frac{1}{\sqrt{t}}\|\bm{h}_{-\max(k)}\|_1 + \frac14\sqrt{\frac{t}{k}}\,u.
	\end{equation}
	In the displayed estimate, the maximum and minimum for the last block are understood after the possible zero-padding. The ordering of the blocks implies $\max_{j\in T_{i+1}}|h_j|\le \min_{j\in T_i}|h_j|$ for full preceding blocks, while zero-padding makes the last minimum equal to zero if the last block is not full. Moreover, if $|h|_{(1)}\ge\cdots\ge |h|_{(n)}$ denotes the nonincreasing rearrangement of $|h|$, then
$$
		\max_{j\in T_1}|h_j|\le |h|_{(k+1)}\le (\frac1k\sum_{j=1}^k |h|_{(j)}^2)^{1/2}=\frac{u}{\sqrt{k}},
$$
	with the same conclusion being trivial when $\bm{h}$ has at most $k$ nonzero entries.
	Substituting the bound for $\|\bm{h}_{-\max(k)}\|_1$, it follows that
	\begin{equation}\label{eq:sbound}
		S \le ( a_p\sqrt{\frac{k}{t}} + \frac14\sqrt{\frac{t}{k}} )u + a_p\,k^{\frac{p-1}{p}}t^{-\frac12} (2^{\frac1p}\sigma + \beta\|\bm{h}\|_q).
	\end{equation}
	Since the block sizes satisfy $|T_0|\le k$ and $|T_i|\le t$, we apply the finite-dimensional norm relation $\|\bm{h}_{T_0}\|_q\le k^{\vartheta_q}\|\bm{h}_{T_0}\|_2$ and   $\|\bm{h}_{T_i}\|_q\le t^{\vartheta_q}\|\bm{h}_{T_i}\|_2$, where $\vartheta_q=\max\{\frac1q-\frac12,0\}$. By the triangle inequality for $q>1$, one has
	\begin{equation}\label{eq:sloop}
		\|\bm{h}\|_q \le \|\bm{h}_{T_0}\|_q + \sum_{i=1}^J \|\bm{h}_{T_i}\|_q \le k^{\vartheta_q}u + t^{\vartheta_q}S.
	\end{equation}
	Substituting \eqref{eq:sloop} into \eqref{eq:sbound} yields
$$
		S \le ( a_p\sqrt{\frac{k}{t}} + \frac14\sqrt{\frac{t}{k}} + a_p\beta k^{\frac{p-1}{p}+\vartheta_q}t^{-\frac12} )u + a_p\,2^{\frac1p}k^{\frac{p-1}{p}}t^{-\frac12}\sigma + a_p\beta k^{\frac{p-1}{p}}t^{\vartheta_q-\frac12}S.
$$
	It follows from the definition of $\widetilde{\eta}_{p,q}(k,t,\beta)$ and $\widetilde{\tau}_{p,q}(k,t,\beta)$ that 
$$
		(1-\widetilde{\eta}_{p,q}(k,t,\beta))S \le (1-\widetilde{\eta}_{p,q}(k,t,\beta))\widetilde{\tau}_{p,q}(k,t,\beta)\,u + a_p\,2^{\frac1p}k^{\frac{p-1}{p}}t^{-\frac12}\sigma.
$$
	Since the condition $\widetilde{\eta}_{p,q}(k,t,\beta)<1$ guarantees positivity, dividing to isolate $S$ yields
	\begin{equation}\label{eq:Sbpund}
		S \le \widetilde{\tau}_{p,q}(k,t,\beta)\,u + \frac{ a_p\,2^{\frac1p}k^{\frac{p-1}{p}}t^{-\frac12} }{ 1-\widetilde{\eta}_{p,q}(k,t,\beta) }\sigma.
	\end{equation}
	It follows from the triangle inequality $\|\bm{h}\|_2 \le u+S$ that
	\begin{equation}\label{eq:hbpund}
		\|\bm{h}\|_2 \le \bigl(1+\widetilde{\tau}_{p,q}(k,t,\beta)\bigr)u + \frac{ a_p\,2^{\frac1p}k^{\frac{p-1}{p}}t^{-\frac12} }{ 1-\widetilde{\eta}_{p,q}(k,t,\beta) }\sigma. 
	\end{equation}
	By the Cauchy--Schwarz inequality and the $k$-RIP, together with the bound $\|\bm{A}\bm{h}\|_2 \le 2\epsilon$, we obtain
	\begin{equation}\label{eq:Ahbound}
		|\langle \bm{A}\bm{h},\bm{A}\bm{h}_{\max(k)}\rangle| \le \|\bm{A}\bm{h}\|_2\,\|\bm{A}\bm{h}_{\max(k)}\|_2 \le 2\epsilon\sqrt{1+\delta_k}\,u.
	\end{equation}
	Expanding the inner product and applying the $(k,t)$-ROP to the disjoint blocks, it follows that
	\begin{equation}
		\begin{aligned}
			\|\langle \bm{A}\bm{h}, \bm{A}\bm{h}_{\max(k)} \rangle \|
			&\overset{\text{a}}{=}\| \langle \bm{A}\bm{h}_{\max(k)}, \bm{A}\bm{h}_{\max(k)} \rangle + \sum_{i=1}^J \langle \bm{A}\bm{h}_{T_i}, \bm{A}\bm{h}_{\max(k)} \rangle \| \nonumber \\
			&\overset{b}{\ge} \left|\langle \bm{A}\bm{h}_{\max(k)}, \bm{A}\bm{h}_{\max(k)} \rangle\right| - \sum_{i=1}^J \left| \langle \bm{A}\bm{h}_{T_i}, \bm{A}\bm{h}_{\max(k)} \rangle \right| \nonumber \\
			&\overset{c}{\ge} (1-\delta_k)\|\bm{h}_{\max(k)}\|_2^2 - \sum_{i=1}^J \left| \langle \bm{A}\bm{h}_{T_i}, \bm{A}\bm{h}_{\max(k)} \rangle \right| \nonumber \\
			&\overset{d}{\ge} (1-\delta_k)u^2 - \theta_{k,t}u \sum_{i=1}^J \|\bm{h}_{T_i}\|_2,
		\end{aligned}
	\end{equation}
	where (a) follows from $A \left( \bm{h}_{\max(k)} + \sum_i \bm{h}_{\mathcal{T}_i} \right) = A \bm{h}_{\max(k)} + \sum_i A \bm{h}_{\mathcal{T}_i}$, (b) follows from $\left| a + \sum_i b_i \right| \geq |a| - \sum_i |b_i|$, (c) follows from $\| A \bm{h}_{\max(k)} \|_2^2 \geq (1 - \delta_k) \| \bm{h}_{\max(k)} \|_2^2$, and (d) follows from $|\langle A x, A y \rangle| \leq \theta_{k,t} \| x \|_2 \| y \|_2$.  Substituting the bound into \eqref{eq:Sbpund} yields
	\begin{equation}\label{eq:Ahmaxbound}
		|\langle \bm{A}\bm{h},\bm{A}\bm{h}_{\max(k)}\rangle| \ge \bigl(1-\widetilde{\psi}_{p,q}(k,t,\beta)\bigr)u^2 - \frac{ a_p\,2^{\frac1p}k^{\frac{p-1}{p}}t^{-\frac12}\theta_{k,t} }{ 1-\widetilde{\eta}_{p,q}(k,t,\beta) }\sigma\,u. 
	\end{equation}
	Combining the upper bound \eqref{eq:Ahbound} and lower bound \eqref{eq:Ahmaxbound}, and utilizing the assumption $1-\widetilde{\psi}_{p,q}(k,t,\beta)>0$, dividing by $u>0$ and isolate $u$ yields
	\begin{equation}\label{eq:absboubd}
		u \le \frac{ a_p\,2^{\frac1p}k^{\frac{p-1}{p}}t^{-\frac12}\theta_{k,t} }{ \bigl(1-\widetilde{\eta}_{p,q}(k,t,\beta)\bigr)\bigl(1-\widetilde{\psi}_{p,q}(k,t,\beta)\bigr) }\sigma + \frac{ 2\sqrt{1+\delta_k} }{ 1-\widetilde{\psi}_{p,q}(k,t,\beta) }\epsilon.
	\end{equation}
	Substituting \eqref{eq:absboubd} into \eqref{eq:hbpund}, and using the algebraic simplification $1 + \frac{(1+\widetilde{\tau})\theta_{k,t}}{1-\widetilde{\psi}} = \frac{1-\delta_k+\theta_{k,t}}{1-\widetilde{\psi}}$ yields precisely
	\begin{equation*}
		\|\bm{h}\|_2 \le \frac{ a_p\,2^{\frac1p}k^{\frac{p-1}{p}}t^{-\frac12}(1-\delta_k+\theta_{k,t}) }{ \bigl(1-\widetilde{\eta}_{p,q}(k,t,\beta)\bigr)\bigl(1-\widetilde{\psi}_{p,q}(k,t,\beta)\bigr) }\sigma + \frac{ 2\bigl(1+\widetilde{\tau}_{p,q}(k,t,\beta)\bigr)\sqrt{1+\delta_k} }{ 1-\widetilde{\psi}_{p,q}(k,t,\beta) }\epsilon.
	\end{equation*}
	That is, $\|\hat{\bm{x}}-\bm{x}\|_2 \le \widetilde{C}_1\|\bm{x}_{-\max(k)}\|_p+\widetilde{C}_2\epsilon$. This completes the proof.
\end{proof}

\begin{remark}[Dimension-free refinement]\label{rem:dimension_free}
	Compared with \cite[Theorem~2]{zhu2026stable}, the main technical change is that the finite-dimensional estimate $\|\bm{h}\|_q\le n^{1/q-1/2}\|\bm{h}\|_2$ is not applied to the full error vector when $1<q<2$. Instead, the same norm relation is applied on localized blocks whose cardinalities are controlled by $k$ and $t$, as in Lemma~\ref{lem:block_lq_l2}. Consequently, the sufficient condition and the error constants above contain only $k$, $t$, $p$, $q$, and $\beta$, and have no explicit dependence on the ambient dimension $n$. This removes the dimension-dependent factor appearing in \cite[Theorem~2 and Remark~6]{zhu2026stable} for the range $1<q<2$.
\end{remark}

The preceding theorem avoids the full-vector factor $n^{1/q-1/2}$, but it still involves the ROC $\theta_{k,t}$. In applications, ROC estimates are often less convenient to check than RIC estimates. It is therefore useful to record a RIP-only variant of the same argument. The only modification is in the treatment of the cross terms: for two disjointly supported vectors whose supports have cardinalities at most $k$ and $t$, respectively, the $(k+t)$-RIP implies the bound
\[
|\langle \bm{A}\bm{u},\bm{A}\bm{v}\rangle|\le \delta_{k+t}\|\bm{u}\|_2\|\bm{v}\|_2.
\]
Replacing the ROC estimate by this RIP-controlled estimate yields the following result. Since generally $\theta_{k,t}\le \delta_{k+t}$, the RIP-only version should be viewed as more directly verifiable, rather than uniformly sharper, than the RIP--ROP version.

\begin{theorem}\label{thm:pureRIP}
	Assume that the true signal $\bm{x}\neq\bm{0}$ in $\mathcal{M}\{\bm{A},\bm{x},\bm{b},m,n,\epsilon\}$ satisfies $\frac{\|\bm{x}\|_p}{\|\bm{x}\|_q}\le \beta$
	with $0<p\le 1$, $q>1$, and $\beta>0$. Let $\hat{\bm{x}}$ be a solution to $\mathcal{Q}_{p,q}^{\epsilon}$. Let $1\le k<n$ and $1\le t\le n-k$ be integers. Define
$a_p:=3^{\frac{1-p}{p}}$, $\vartheta_q:=\max\left\{\frac{1}{q}-\frac{1}{2},\,0\right\}$,
		$	\rho_{p}(k,t):=	a_p\sqrt{\frac{k}{t}}+\frac14\sqrt{\frac{t}{k}}$, 
			$\alpha_{p,q}(k,t,\beta):=a_p\,\beta\,k^{\frac{p-1}{p}+\vartheta_q}\,t^{-\frac12}$,
		$	\overline{\eta}_{p,q}(k,t,\beta):=a_p\,\beta\,k^{\frac{p-1}{p}}\,t^{\vartheta_q-\frac12}$,  $C_{p}(k,t):=a_p\,2^{\frac1p}\,k^{\frac{p-1}{p}}\,t^{-\frac12}$,
			$\overline{\tau}_{p,q}(k,t,\beta):=\frac{\rho_p(k,t)+\alpha_{p,q}(k,t,\beta)}{1-\overline{\eta}_{p,q}(k,t,\beta)}$,
	and $\overline{\psi}_{p,q}(k,t,\beta):=\delta_k+\delta_{k+t}\,\overline{\tau}_{p,q}(k,t,\beta)$. If $\bm{A}$ satisfies $(k+t)$-RIP with $\overline{\eta}_{p,q}(k,t,\beta)<1$ and $\overline{\psi}_{p,q}(k,t,\beta)<1$, then
$$
		\|\hat{\bm{x}}-\bm{x}\|_2 \le \overline{C}_1\|\bm{x}_{-\max(k)}\|_p+\overline{C}_2\epsilon,
$$
	where
	$$
			\overline{C}_1:=\frac{C_p(k,t)}{1-\overline{\eta}_{p,q}(k,t,\beta)}(1+	\frac{(1+\overline{\tau}_{p,q}(k,t,\beta))\delta_{k+t}}{1-\overline{\psi}_{p,q}(k,t,\beta)}),\quad
			\overline{C}_2:=\frac{2(1+\overline{\tau}_{p,q}(k,t,\beta)r)\sqrt{1+\delta_k}}{1-\overline{\psi}_{p,q}(k,t,\beta)}.
		$$
\end{theorem}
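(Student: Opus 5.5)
The plan is to repeat the proof of Theorem~\ref{thm:ImprovedROPRIP} verbatim up to the point where the ROC enters, and to replace only the cross-term estimate. Set $\bm{h}:=\hat{\bm{x}}-\bm{x}\neq\bm{0}$ and use the same block decomposition: $T_0=\operatorname{supp}(\bm{h}_{\max(k)})$, then ordered tail blocks $T_1,\dots,T_J$ of size at most $t$. Put $u:=\|\bm{h}_{\max(k)}\|_2$, $S:=\sum_{i=1}^J\|\bm{h}_{T_i}\|_2$ and $\sigma:=\|\bm{x}_{-\max(k)}\|_p$.

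First, combine Lemma~\ref{lem:zhu_tail_lp}, Lemma~\ref{lem:cai_zhang_tail} with $\rho=1/p$, and the inequality $(A+B+C)^{1/p}\le a_p(A^{1/p}+B^{1/p}+C^{1/p})$. This gives the $\ell_1$ tail bound
\[
\|\bm{h}_{-\max(k)}\|_1\le a_p\sqrt{k}\,u+a_pk^{\frac{p-1}{p}}\bigl(2^{1/p}\sigma+\beta\|\bm{h}\|_q\bigr).
\]
Second, apply Lemma~\ref{lem:lem5} blockwise and use the ordering bound $\max_{T_1}|h_j|\le u/\sqrt{k}$. This yields $S\le \rho_p(k,t)u+a_pk^{\frac{p-1}{p}}t^{-1/2}(2^{1/p}\sigma+\beta\|\bm{h}\|_q)$. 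Third, insert the dimension-free block estimate of Lemma~\ref{lem:block_lq_l2}, $\|\bm{h}\|_q\le k^{\vartheta_q}u+t^{\vartheta_q}S$. The term in $S$ has coefficient $\overline{\eta}_{p,q}$, so absorbing it under $\overline{\eta}_{p,q}<1$ gives
\[
S\le \overline{\tau}_{p,q}u+\frac{C_p(k,t)}{1-\overline{\eta}_{p,q}}\sigma,\qquad \|\bm{h}\|_2\le(1+\overline{\tau}_{p,q})u+\frac{C_p(k,t)}{1-\overline{\eta}_{p,q}}\sigma .
\]
This is exactly \eqref{eq:Sbpund}--\eqref{eq:hbpund}, with $\widetilde{\tau},\widetilde{\eta}$ renamed $\overline{\tau},\overline{\eta}$.

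The only new ingredient is the bound on $\langle\bm{A}\bm{h}_{T_i},\bm{A}\bm{h}_{T_0}\rangle$. Here I would use the standard polarization argument. For disjointly supported $\bm{u}$ (at most $k$-sparse) and $\bm{v}$ (at most $t$-sparse) with unit norms, $\bm{u}\pm\bm{v}$ is $(k+t)$-sparse with $\|\bm{u}\pm\bm{v}\|_2^2=2$. Hence
\[
|\langle\bm{A}\bm{u},\bm{A}\bm{v}\rangle|=\tfrac14\bigl|\|\bm{A}(\bm{u}+\bm{v})\|_2^2-\|\bm{A}(\bm{u}-\bm{v})\|_2^2\bigr|\le\delta_{k+t},
\]
and homogeneity gives $|\langle\bm{A}\bm{u},\bm{A}\bm{v}\rangle|\le\delta_{k+t}\|\bm{u}\|_2\|\bm{v}\|_2$. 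This is the step I would check most carefully. It needs $|T_0|+|T_i|\le k+t$, which holds, and $\delta_k\le\delta_{k+t}$, so that $(k+t)$-RIP alone controls $\delta_k$ as well; the hypothesis $\overline{\psi}_{p,q}<1$ then keeps $\delta_k<1$ meaningful.

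With this bound, the chain (a)--(d) becomes
\[
|\langle\bm{A}\bm{h},\bm{A}\bm{h}_{\max(k)}\rangle|\ge(1-\delta_k)u^2-\delta_{k+t}uS\ge(1-\overline{\psi}_{p,q})u^2-\frac{C_p\delta_{k+t}}{1-\overline{\eta}_{p,q}}\sigma u .
\]
Comparing with the upper bound $2\epsilon\sqrt{1+\delta_k}\,u$ from \eqref{eq:Ahbound} and dividing by $u>0$ gives
\[
u\le\frac{C_p\delta_{k+t}}{(1-\overline{\eta}_{p,q})(1-\overline{\psi}_{p,q})}\sigma+\frac{2\sqrt{1+\delta_k}}{1-\overline{\psi}_{p,q}}\epsilon .
\]
Substituting this into the bound for $\|\bm{h}\|_2$ gives the $\sigma$-coefficient
\[
\frac{C_p}{1-\overline{\eta}_{p,q}}\Bigl(1+\frac{(1+\overline{\tau}_{p,q})\delta_{k+t}}{1-\overline{\psi}_{p,q}}\Bigr)=\overline{C}_1
\]
and the $\epsilon$-coefficient $\overline{C}_2$. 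The stray ``$r$'' in the printed $\overline{C}_2$ I would read as a typo, taking $\overline{C}_2=2(1+\overline{\tau}_{p,q})\sqrt{1+\delta_k}/(1-\overline{\psi}_{p,q})$.
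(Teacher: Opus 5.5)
Your proposal is correct and follows the paper's proof essentially step for step. The paper likewise reuses the ROP-free estimates \eqref{eq:Sbpund}--\eqref{eq:hbpund} from Theorem~\ref{thm:ImprovedROPRIP}, bounds the cross terms by $\delta_{k+t}$ through the disjoint-support polarization argument (phrased there as applying the RIP to $\bm{u}\pm c\bm{v}$ and optimizing over $c>0$, which amounts to your unit-norm normalization), and then solves for $u$ exactly as you do; your reading of the stray $r$ in $\overline{C}_2$ as a typo matches the final display of the paper's proof, whose $\epsilon$-coefficient is $2(1+\overline{\tau}_{p,q})\sqrt{1+\delta_k}/(1-\overline{\psi}_{p,q})$.
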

\begin{proof}
	Let the recovery error vector be denoted by $\bm{h} := \hat{\bm{x}} - \bm{x}$. The case $\bm{h} = \bm{0}$ is trivial; thus, we solely consider $\bm{h} \neq \bm{0}$.
	
	Following the same block-decomposition strategy established in Theorem~\ref{thm:ImprovedROPRIP}, we define $T_0 := \operatorname{supp}(\bm{h}_{\max(k)})$. The complementary support $T_0^c$ is partitioned into disjoint subsets $T_1, \dots, T_J$, where each $T_i$ contains the indices of the $t$ largest elements in absolute value of $\bm{h}_{(T_0\cup\cdots\cup T_{i-1})^c}$. Set
	$
		S:=\sum_{i=1}^J\|\bm{h}_{T_i}\|_2,\quad u:=\|\bm{h}_{\max(k)}\|_2$, and $ \sigma:=\|\bm{x}_{-\max(k)}\|_p$. Since $\bm{h}\neq \bm{0}$ and $k\ge 1$, one has $u>0$. The part of the proof of Theorem~\ref{thm:ImprovedROPRIP} up to the estimate of $S$ uses only Lemmas~\ref{lem:zhu_tail_lp}--\ref{lem:block_lq_l2} and does not use ROP. Hence, with the present notation,
	\begin{equation}\label{eq:pure_rip_S_bound}
		S\le \overline{\tau}_{p,q}(k,t,\beta)u+\frac{C_p(k,t)}{1-\overline{\eta}_{p,q}(k,t,\beta)}\sigma .
	\end{equation}
	Consequently,
	\begin{equation}\label{eq:pure_rip_h2_total}
		\|\bm{h}\|_2 \le \bigl(1+\overline{\tau}_{p,q}(k,t,\beta)\bigr)u + \frac{ C_p(k,t) }{ 1-\overline{\eta}_{p,q}(k,t,\beta) }\sigma. 
	\end{equation}
	Since $(k+t)$-RIP implies $k$-RIP by monotonicity of the RIC, the Cauchy--Schwarz inequality and the feasibility constraint $\|\bm{A}\bm{h}\|_2 \le 2\epsilon$ give
$$
		|\langle \bm{A}\bm{h}_{\max(k)},\bm{A}\bm{h}\rangle| \le 2\epsilon\sqrt{1+\delta_k}\,u. 
$$
	For the lower bound, expanding the inner product yields
$$
		|\langle \bm{A}\bm{h}_{\max(k)},\bm{A}\bm{h}\rangle| \ge (1-\delta_k)u^2 - \sum_{i=1}^J \left| \langle \bm{A}\bm{h}_{\max(k)},\bm{A}\bm{h}_{T_i}\rangle \right|.
$$
	Since the vectors $\bm{h}_{\max(k)}$ and $\bm{h}_{T_i}$ possess disjoint supports with cardinalities at most $k$ and $t$, respectively, their union support is bounded by $k+t$. By the standard disjoint-support consequence of the $(k+t)$-RIP, obtained by applying the RIP to $\bm{u}\pm c\bm{v}$ and optimizing over $c>0$, one has
$$
		\left| \langle \bm{A}\bm{h}_{\max(k)},\bm{A}\bm{h}_{T_i}\rangle \right| \le \delta_{k+t}\,u\,\|\bm{h}_{T_i}\|_2.
$$
	Consequently, the lower bound becomes
$$
		|\langle \bm{A}\bm{h}_{\max(k)},\bm{A}\bm{h}\rangle| \ge (1-\delta_k)u^2 - \delta_{k+t}\,u\,S.
$$
	Substituting \eqref{eq:pure_rip_S_bound}, one has
$$
		|\langle \bm{A}\bm{h}_{\max(k)},\bm{A}\bm{h}\rangle| \ge \bigl(1-\overline{\psi}_{p,q}(k,t,\beta)\bigr)u^2 - \delta_{k+t} \frac{ C_p(k,t) }{ 1-\overline{\eta}_{p,q}(k,t,\beta) }\sigma\,u, 
$$
	where $\overline{\psi}_{p,q}(k,t,\beta) := \delta_k + \delta_{k+t}\overline{\tau}_{p,q}(k,t,\beta)$. Under the strict assumption that $1-\overline{\psi}_{p,q}(k,t,\beta)>0$, we have
	\begin{equation}\label{eq:proof_u_final}
		u \le \frac{ \delta_{k+t}C_p(k,t) }{ \bigl(1-\overline{\eta}_{p,q}(k,t,\beta)\bigr)\bigl(1-\overline{\psi}_{p,q}(k,t,\beta)\bigr) }\sigma + \frac{ 2\sqrt{1+\delta_k} }{ 1-\overline{\psi}_{p,q}(k,t,\beta) }\epsilon.
	\end{equation}
	Finally, substituting \eqref{eq:proof_u_final} back into \eqref{eq:pure_rip_h2_total} yields
	\begin{equation*}
		\|\bm{h}\|_2 \le \frac{ C_p(k,t) }{ 1-\overline{\eta}_{p,q}(k,t,\beta) } \left( 1+ \frac{ \bigl(1+\overline{\tau}_{p,q}(k,t,\beta)\bigr)\delta_{k+t} }{ 1-\overline{\psi}_{p,q}(k,t,\beta) } \right)\sigma + \frac{ 2\bigl(1+\overline{\tau}_{p,q}(k,t,\beta)\bigr)\sqrt{1+\delta_k} }{ 1-\overline{\psi}_{p,q}(k,t,\beta) }\epsilon.
   	\end{equation*}
	This precisely coincides with $\|\hat{\bm{x}}-\bm{x}\|_2 \le \overline{C}_1\|\bm{x}_{-\max(k)}\|_p + \overline{C}_2\epsilon$.
\end{proof}

\begin{remark}
	Theorem~\ref{thm:pureRIP} has the same proof architecture as Theorem~\ref{thm:ImprovedROPRIP}, with the ROC $\theta_{k,t}$ replaced by the RIC $\delta_{k+t}$ in the estimates of the cross terms. Since the standard RIP implication gives $\theta_{k,t}\le \delta_{k+t}$, the RIP-only condition is generally easier to verify but may be more conservative than the RIP--ROP condition. Thus the purpose of Theorem~\ref{thm:pureRIP} is not to claim a uniformly sharper constant, but to provide a dimension-free recovery guarantee stated solely in terms of RIP constants.
\end{remark}

\section{Algorithm}\label{sec:Algorithm}
In this section, we develop a standard Dinkelbach-type linearized proximal framework for the proposed ratio minimization model and establish convergence properties for the resulting outer sequence. In the numerical implementation, the resulting linearized-proximal subproblems are treated by an ADMM routine.

The starting point is the link between the constrained ratio problem~\eqref{eq:ratio_model} and a parameter-dependent difference formulation. Proposition~\ref{prop:equivalence} provides the sign characterization that motivates the standard Dinkelbach ratio update. The convergence analysis is then developed for the prox-linear outer scheme below.

\subsection{Dinkelbach Method for the Generalized Ratio Model}

Let $\mathcal{F}:=\{\bm{x}\in\mathbb{R}^n\mid \bm{A}\bm{x}=\bm{b}\}$. Fractional programming connects the constrained ratio model with the parametric value function
$
g(\alpha):=\sup_{\bm{x}\in\mathcal{F}}\left\{\alpha\|\bm{x}\|_q^p-\|\bm{x}\|_p^p\right\}.
$
The following proposition records the sign characterization of $g$ and provides the motivation for the standard Dinkelbach ratio update used below.

\begin{proposition}[] \label{prop:equivalence}
	Consider the constrained ratio minimization problem and denote its optimal value as $\alpha^*$, that is
	\begin{equation}
		\alpha^* := \inf_{\bm{x} \in \mathbb{R}^n} \left\{ \frac{\|\bm{x}\|_p^p}{\|\bm{x}\|_q^p} \quad \text{s.t.} \quad \bm{A}\bm{x} = \bm{b} \right\}, \label{eq:ratio_model}
	\end{equation}
	where $0 < p \le 1$, $q > 1$, and $\bm{b} \neq \bm{0}$. 
	For any scalar $\alpha > 0$, define the corresponding parameter-dependent value function
	\begin{equation}\label{eq:g_alpha_def}
		g(\alpha) := \sup_{\bm{x} \in \mathbb{R}^n} \left\{ \alpha\|\bm{x}\|_q^p - \|\bm{x}\|_p^p \quad \text{s.t.} \quad \bm{A}\bm{x} = \bm{b} \right\}. 
	\end{equation}
	Then, the parametric function $g(\alpha)$ exhibits the following rigorous properties linking it to the ratio model~\eqref{eq:ratio_model}:
		(a) If $g(\alpha) > 0$, then $\alpha > \alpha^*$;
		(b) If $g(\alpha) \le 0$, then $\alpha \le \alpha^*$;
		(c) If $g(\alpha) = 0$, then $\alpha = \alpha^*$.
\end{proposition}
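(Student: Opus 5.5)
The plan is to work directly from the definitions of $\alpha^*$ and $g(\alpha)$, using the fact that $\bm{b}\neq\bm{0}$ forces every feasible point to be nonzero. Throughout we assume $\mathcal{F}\neq\varnothing$; otherwise both $\alpha^*$ and $g$ are degenerate. Then every $\bm{x}\in\mathcal{F}$ has $\|\bm{x}\|_q>0$, so the ratio $\|\bm{x}\|_p^p/\|\bm{x}\|_q^p$ is well defined on $\mathcal{F}$. Moreover $0\le\alpha^*<\infty$. The key identity is that, for $\bm{x}\in\mathcal{F}$,
\[
\alpha\|\bm{x}\|_q^p-\|\bm{x}\|_p^p=\|\bm{x}\|_q^p\Bigl(\alpha-\frac{\|\bm{x}\|_p^p}{\|\bm{x}\|_q^p}\Bigr).
\]
Since $\|\bm{x}\|_q^p>0$, the sign of the parametric objective equals the sign of $\alpha$ minus the ratio.

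For (a), if $g(\alpha)>0$ then by the definition of the supremum there is some $\bm{x}\in\mathcal{F}$ with $\alpha\|\bm{x}\|_q^p-\|\bm{x}\|_p^p>0$. By the identity, its ratio is strictly less than $\alpha$, hence $\alpha^*<\alpha$.

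For (b), if $g(\alpha)\le 0$ then every feasible $\bm{x}$ satisfies $\|\bm{x}\|_p^p\ge\alpha\|\bm{x}\|_q^p$. So every ratio is at least $\alpha$, and taking the infimum gives $\alpha\le\alpha^*$.

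For (c), (b) already gives $\alpha\le\alpha^*$, so we argue by contradiction and suppose $\alpha<\alpha^*$. For each $\bm{x}\in\mathcal{F}$, the ratio is at least $\alpha^*$, so the identity yields
\[
\alpha\|\bm{x}\|_q^p-\|\bm{x}\|_p^p\le(\alpha-\alpha^*)\|\bm{x}\|_q^p .
\]
This alone does not force $g(\alpha)<0$, because the supremum could approach $0$ along feasible points with $\|\bm{x}\|_q\to0$. The main step is therefore a uniform lower bound on $\|\bm{x}\|_q$ over $\mathcal{F}$, which is where $\bm{b}\neq\bm{0}$ is used. From $\|\bm{b}\|_2=\|\bm{A}\bm{x}\|_2\le\|\bm{A}\|_{2}\|\bm{x}\|_2$ and the equivalence of norms on $\mathbb{R}^n$ (namely $\|\bm{x}\|_2\le c_{n,q}\|\bm{x}\|_q$), we get
\[
\|\bm{x}\|_q\ge c_0:=\frac{\|\bm{b}\|_2}{c_{n,q}\|\bm{A}\|_2}>0\quad\text{for all }\bm{x}\in\mathcal{F}.
\]
Because $\alpha-\alpha^*<0$, it follows that $g(\alpha)\le(\alpha-\alpha^*)c_0^p<0$, which contradicts $g(\alpha)=0$. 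Hence $\alpha=\alpha^*$.
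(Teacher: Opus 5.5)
Your proof is correct and follows essentially the same approach as the paper: (a) and (b) are identical. In (c) the paper takes a maximizing sequence and divides by the uniform bound $\|\bm{x}\|_q^p\ge M>0$ on $\mathcal{F}$, while you argue by contradiction to obtain $g(\alpha)\le(\alpha-\alpha^*)c_0^p<0$; both arguments rest on the same lower bound coming from $\|\bm{b}\|_2\le\|\bm{A}\|_2\|\bm{x}\|_2$ and norm equivalence.
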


\begin{proof}
	Let $\mathcal{F} = \{\bm{x} \in \mathbb{R}^n \mid \bm{A}\bm{x} = \bm{b}\}$ denote the feasible set.  Since $\bm{b} \neq \bm{0}$, the origin is not feasible, i.e., $\bm{0} \notin \mathcal{F}$. Consequently, for any $\bm{x} \in \mathcal{F}$, the denominator is strictly positive ($\|\bm{x}\|_q^p > 0$).
	
	(a) If $g(\alpha) > 0$, by the definition of the supremum~\eqref{eq:g_alpha_def}, there exists a feasible vector $\bm{x} \in \mathcal{F}$ such that $\alpha\|\bm{x}\|_q^p - \|\bm{x}\|_p^p > 0$. Since $\|\bm{x}\|_q^p > 0$, dividing both sides by this strictly positive value yields $\alpha > \frac{\|\bm{x}\|_p^p}{\|\bm{x}\|_q^p}$. 
	By definition~\eqref{eq:ratio_model}, $\frac{\|\bm{x}\|_p^p}{\|\bm{x}\|_q^p} \ge \inf_{\bm{x} \in \mathcal{F}} \frac{\|\bm{x}\|_p^p}{\|\bm{x}\|_q^p} = \alpha^*$. Therefore, we deduce $\alpha > \alpha^*$.
	
	(b) If $g(\alpha) \le 0$, it implies that for all $\bm{x} \in \mathcal{F}$, we have $\alpha\|\bm{x}\|_q^p - \|\bm{x}\|_p^p \le 0$. Dividing by $\|\bm{x}\|_q^p > 0$ gives $\alpha \le \frac{\|\bm{x}\|_p^p}{\|\bm{x}\|_q^p}$ for all $\bm{x} \in \mathcal{F}$. 
	Taking the infimum over the entire feasible set $\mathcal{F}$, we obtain $\alpha \le \inf_{\bm{x} \in \mathcal{F}} \frac{\|\bm{x}\|_p^p}{\|\bm{x}\|_q^p} = \alpha^*$. Hence, $\alpha \le \alpha^*$.
	
	(c) If $g(\alpha) = 0$, it immediately follows from part (b) that $\alpha \le \alpha^*$. 
	Furthermore, by the definition of the supremum, there exists a sequence $\{\bm{x}_k\} \subset \mathcal{F}$ such that
	\begin{equation}
		\lim_{k \to \infty} \left( \alpha\|\bm{x}_k\|_q^p - \|\bm{x}_k\|_p^p \right) = 0. \label{eq:limit_seq}
	\end{equation}
	Since $\bm{x}_k \in \mathcal{F}$, we have $\bm{A}\bm{x}_k = \bm{b}$. By matrix norm properties, $\|\bm{b}\|_2 = \|\bm{A}\bm{x}_k\|_2 \le \|\bm{A}\|_2 \|\bm{x}_k\|_2$. Thus, $\|\bm{x}_k\|_2 \ge \frac{\|\bm{b}\|_2}{\|\bm{A}\|_2} > 0$.
	Due to the finite-dimensional norm equivalence in $\mathbb{R}^n$, there exists a constant $C_{q,2} > 0$ (specifically, $C_{q,2} = n^{\min(0, 1/q - 1/2)}$) such that $\|\bm{x}_k\|_q \ge C_{q,2}\|\bm{x}_k\|_2$. 
	Therefore, $\|\bm{x}_k\|_q^p \ge \left( C_{q,2} \frac{\|\bm{b}\|_2}{\|\bm{A}\|_2} \right)^p := M > 0$. 
	Because the sequence $\|\bm{x}_k\|_q^p$ is bounded away from zero by a strictly positive constant $M$, dividing the limit~\eqref{eq:limit_seq} by $\|\bm{x}_k\|_q^p$ yields
	\begin{equation*}
		\lim_{k \to \infty} \frac{\alpha\|\bm{x}_k\|_q^p - \|\bm{x}_k\|_p^p}{\|\bm{x}_k\|_q^p} = 0 \implies \lim_{k \to \infty} \frac{\|\bm{x}_k\|_p^p}{\|\bm{x}_k\|_q^p} = \alpha.
	\end{equation*}
	By the definition of the optimal ratio, $\alpha^* \le \frac{\|\bm{x}_k\|_p^p}{\|\bm{x}_k\|_q^p}$ holds for all $k$. Taking the limit as $k \to \infty$ yields $\alpha^* \le \alpha$. 
	Combining $\alpha \le \alpha^*$ and $\alpha^* \le \alpha$, we definitively conclude $\alpha = \alpha^*$.
\end{proof}

Although $g$ is written as a maximization problem, its maximizers are exactly the minimizers of the sign-reversed difference objective $\|\bm{x}\|_p^p-\alpha\|\bm{x}\|_q^p$ on $\mathcal{F}$. Solving this global problem is generally intractable for the present nonconvex model. Therefore, at the $k$-th outer iteration, we use a linearized-proximal approximation of this linearly constrained minimization problem. Since the present subproblem is highly nonconvex, we keep the nonsmooth numerator in proximal form and linearize only $s(\bm{x})=\|\bm{x}\|_q^p$ at the current point. A proximal term $\frac{\beta}{2}\|\bm{x}-\bm{x}^{(k)}\|_2^2$ is added to obtain descent control and boundedness of the outer scheme. The resulting linearized proximal subproblem is
\begin{equation}\label{eq:outer_subproblem}
	\bm{x}^{(k+1)}= \arg\min_{\bm{x}\in\mathbb{R}^n}\;\left\{h(\bm{x})-\langle \bm{c}^{(k)},\bm{x}\rangle+\frac{\beta}{2}\|\bm{x}-\bm{x}^{(k)}\|_2^2\right\},
\end{equation}
where $h(\bm{x})=\|\bm{x}\|_p^p+I_{\mathcal{F}}(\bm{x})$, $I_{\mathcal{F}}$ is the indicator function of $\mathcal{F}$, and the linearization coefficient $\bm{c}^{(k)}$ is given by
$$
	\bm{c}^{(k)} = \alpha^{(k)} \cdot p \|\bm{x}^{(k)}\|_q^{p-q} \operatorname{sign}(\bm{x}^{(k)}) \odot |\bm{x}^{(k)}|^{q-1}.
$$

For numerical implementation of the linearization proximal problem~\eqref{eq:outer_subproblem}, we introduce an auxiliary variable $\bm{y} = \bm{x}$ and construct the augmented Lagrangian function
\begin{equation}
	\mathcal{L}_{\rho}(\bm{x}, \bm{y}, \bm{u}) = \|\bm{y}\|_p^p + I_{\mathcal{F}}(\bm{x}) - \langle \bm{c}^{(k)}, \bm{x} \rangle + \frac{\beta}{2}\|\bm{x} - \bm{x}^{(k)}\|_2^2 + \frac{\rho}{2}\left\|\bm{x} - \bm{y} + \frac{\bm{u}}{\rho}\right\|_2^2,
	\label{eq:aug_lagrangian}
\end{equation}
where $\bm{u}$ is the Lagrange multiplier, $\rho$ is the penalty parameter, and $\beta$ is the proximal parameter.

In the $l$-th inner step, the $\bm{x}$-update admits a closed-form solution via orthogonal affine projection, that is
\begin{equation*}
	\bm{x}_{\text{in}}^{(l+1)} = \boldsymbol{\psi}^{(l)} - \bm{P}_A(\bm{A}\boldsymbol{\psi}^{(l)} - \bm{b}), \quad \text{with} \;\; \boldsymbol{\psi}^{(l)} = \frac{\rho\bm{y}^{(l)} + \beta\bm{x}^{(k)} - \bm{u}^{(l)} + \bm{c}^{(k)}}{\rho + \beta},
	\label{eq:x_update}
\end{equation*}
where $\bm{P}_A = \bm{A}^T(\bm{A}\bm{A}^T)^{\dagger}$ is the pre-computed pseudo-inverse projection operator, reducing the computational complexity from $\mathcal{O}(n^3)$ matrix inversions to $\mathcal{O}(mn)$ matrix-vector multiplications. 

The $\bm{y}$-subproblem is separable and corresponds to the proximal subproblem of the $\ell_p^p$ penalty, given by
\begin{equation*}
	\bm{y}^{(l+1)} = \arg\min_{\bm{y}} \|\bm{y}\|_p^p + \frac{\rho}{2}\left\|\bm{y} - \left(\bm{x}_{\text{in}}^{(l+1)} + \frac{\bm{u}^{(l)}}{\rho}\right)\right\|_2^2.
	\label{eq:y_proximal_def}
\end{equation*}
For $p = 1$,~\eqref{eq:y_proximal_def} reduces to the standard soft-thresholding operator. For $0 < p < 1$, the coordinate update is evaluated by the Generalized Soft-Thresholding (GST) formula $\mathcal{T}_p^{\rho}(\cdot)$ \cite{zuo2013generalized}. Denoting $t = x_{\text{in}, i}^{(l+1)} + u_i^{(l)} / \rho$, The operator $\mathcal{T}_p^{\rho}$ is defined by $\mathcal{T}_p^{\rho}(t)=0$ for $|t|\le \tau$ and $\mathcal{T}_p^{\rho}(t)=\operatorname{sign}(t)h^{-1}(|t|)$ for $|t|>\tau$, where $h(z) = p z^{p-1} / \rho + z$, and the threshold $\tau$ is analytically determined by $\tau = \beta_p + p \beta_p^{p-1} / \rho$ with $\beta_p = [2(1-p)/\rho]^{1/(2-p)}$. In the numerical implementation, the positive branch of $h^{-1}$ is evaluated by safeguarded Newton iteration, and a continuation strategy on $\rho$ is used to improve the stability of the inner ADMM iteration. 

The dual variable is updated by $\bm{u}^{(l+1)} = \bm{u}^{(l)} + \rho(\bm{x}_{\mathrm{in}}^{(l+1)} - \bm{y}^{(l+1)})$. After the $k$-th linearized subproblem update is computed, we evaluate $f_1^{(k)}=\|\bm{x}^{(k+1)}\|_p^p$, $f_2^{(k)}=\|\bm{x}^{(k+1)}\|_q^p$, and $\Delta_k=\alpha^{(k)}f_2^{(k)}-f_1^{(k)}$. The outer scheme analyzed below is
\begin{equation}\label{eq:standard_dinkelbach_outer_loop}
	\left\{
	\begin{aligned}
		&\bm{x}^{(k+1)}
		\in \arg\min_{\bm{x}\in\mathbb{R}^n}
		\left\{
		h(\bm{x})-\langle \bm{c}^{(k)},\bm{x}\rangle
		+\frac{\beta}{2}\|\bm{x}-\bm{x}^{(k)}\|_2^2
		\right\},\\
		&\Delta_k:=\alpha^{(k)}\|\bm{x}^{(k+1)}\|_q^p-\|\bm{x}^{(k+1)}\|_p^p,\\
		&\alpha^{(k+1)}
		:=\frac{\|\bm{x}^{(k+1)}\|_p^p}{\|\bm{x}^{(k+1)}\|_q^p}
		=\alpha^{(k)}-\frac{\Delta_k}{\|\bm{x}^{(k+1)}\|_q^p}.
	\end{aligned}
	\right.
\end{equation}
The Dinkelbach linearization proximal algorithm (DLPA) is summarized in Algorithm~\ref{alg:standard_dinkelbach}.
\begin{algorithm}[t]
	\caption{Prox-Linear Dinkelbach Method for $\mathcal{Q}_{p,q}$-Minimization}
	\label{alg:standard_dinkelbach}
	\LinesNumbered
	\KwIn{$0<p\le1$, $q>1$, sensing matrix $\bm{A}\in\mathbb{R}^{m\times n}$, observation vector $\bm{b}\in\mathbb{R}^m$, initial feasible point $\bm{x}^{(0)}\in\mathcal{F}$, proximal parameter $\beta>0$, maximum iteration number $K_{\max}$, and tolerance $\varepsilon>0$}
	\KwOut{Recovered sparse solution $\bm{x}$ and ratio value $\alpha$}
	
	Set $\alpha^{(0)}\gets \|\bm{x}^{(0)}\|_p^p/\|\bm{x}^{(0)}\|_q^p$\;
	\For{$k=0,1,\ldots,K_{\max}-1$}{
		\tcp{Linearization of the denominator}
		$\bm{c}^{(k)}\gets \alpha^{(k)}p\|\bm{x}^{(k)}\|_q^{p-q}\operatorname{sign}(\bm{x}^{(k)})\odot|\bm{x}^{(k)}|^{q-1}$\;
		\tcp{Prox-linear update}
		Compute
		$
			\bm{x}^{(k+1)}\in\arg\min_{\bm{x}\in\mathbb{R}^n}
			\{h(\bm{x})-\langle \bm{c}^{(k)},\bm{x}\rangle +\frac{\beta}{2}\|\bm{x}-\bm{x}^{(k)}\|_2^2\}
		$\;
		\tcp{Ratio update}
		$\Delta_k\gets \alpha^{(k)}\|\bm{x}^{(k+1)}\|_q^p-\|\bm{x}^{(k+1)}\|_p^p$\;
		$\alpha^{(k+1)}\gets \|\bm{x}^{(k+1)}\|_p^p/\|\bm{x}^{(k+1)}\|_q^p$\;
		\lIf{$|\Delta_k|<\varepsilon$ or $\|\bm{x}^{(k+1)}-\bm{x}^{(k)}\|_2/\|\bm{x}^{(k)}\|_2<\varepsilon$}{\textbf{break}}
	}
	
	\Return $\bm{x}^{(k+1)}$ and $\alpha^{(k+1)}$
\end{algorithm}

\subsection{Convergence analysis}

In this subsection, we establish a convergence framework for the standard prox-linear Dinkelbach scheme underlying Algorithm~\ref{alg:standard_dinkelbach}. The main difficulty is that the present model combines two nonconvex components: the numerator $\|\bm{x}\|_p^p$ is nonconvex when $0<p<1$, and the term $-\alpha\|\bm{x}\|_q^p$ must be linearized at every outer iteration. Therefore, unlike the $\ell_1/\ell_2$ analysis in \cite{wang2020accelerated}, one cannot directly reuse a convex-proximal-gradient argument. Let us begin by introducing the following two definitions.
\begin{definition}[Regular and limiting subdifferentials]
	Let $f: \mathbb{R}^n \to (-\infty, +\infty]$ be a proper closed function. 
	\begin{itemize}
		\item The regular (Fr\'echet) subdifferential of $f$ at $\bm{x}$ is
		\begin{equation*}
			\hat{\partial} f(\bm{x}) := \left\{ \bm{v} \in \mathbb{R}^n \;\Big|\; \liminf_{\bm{y} \to \bm{x},\, \bm{y} \neq \bm{x}} \frac{f(\bm{y}) - f(\bm{x}) - \langle \bm{v}, \bm{y} - \bm{x} \rangle}{\|\bm{y} - \bm{x}\|} \ge 0 \right\}.
		\end{equation*}
		\item The limiting (Mordukhovich) subdifferential of $f$ at $\bm{x} \in \mathrm{dom}\, f$ is
		\begin{equation*}
			\partial f(\bm{x}) := \left\{ \bm{v} \in \mathbb{R}^n \;\Big|\; \exists \bm{x}^k \to \bm{x},\, f(\bm{x}^k) \to f(\bm{x}),\, \bm{v}^k \in \hat{\partial} f(\bm{x}^k),\, \bm{v}^k \to \bm{v} \right\}.
		\end{equation*}
	\end{itemize}
\end{definition}

Let $s(\bm{x}):=\|\bm{x}\|_q^p$.  We work under the following conditions throughout the convergence analysis.
\begin{assumption}[]\label{assump:standing}
	We make the following assumptions:
	\begin{enumerate}[{\rm(i)}]
		\item $\mathcal{F} \neq \emptyset$, $0 < p \le 1$, $q > 1$, $\bm{b} \neq \bm{0}$, and $\bm{A}$ has full row rank.
		\item The outer sequence $\{\bm{x}^{(k)}\}$ generated by \eqref{eq:standard_dinkelbach_outer_loop} is bounded, that is, there exists a positive $\bar{\nu}$ such that $\|\bm{x}^{(k)}\|_2\leq \bar{\nu}$ for any $k\geq 0$.
		\item There exist a compact set $\Omega \subset \mathbb{R}^n$ and an open convex set $\mathcal{U}\subset\mathbb{R}^n$ such that
		$\Omega\subset\mathcal{U}$, $\bm{x}^{(k)} \in \Omega \cap \mathcal{F}$ for all $k$, and $\nabla s$ is $L_s$-Lipschitz continuous on $\mathcal{U}$, namely $\|\nabla s(\bm{x}) - \nabla s(\bm{y})\|_2 \le L_s \|\bm{x} - \bm{y}\|_2$ for any $\bm{x},\bm{y} \in \mathcal{U}$.
		\item The proximal parameter satisfies $\beta > \overline{\alpha} L_s$ and $\overline{\alpha} := n^{1 - p/q}$.
	\end{enumerate}
\end{assumption}

\begin{remark}
	The Lipschitz-gradient assumption in Assumption \ref{assump:standing}(iii) is automatic when $q\ge 2$, provided the iterates are bounded. Indeed,
	in that case $s(\bm{x})=\|\bm{x}\|_q^p$ is $C^1$ on
	$\mathbb{R}^n\setminus\{\bm{0}\}$. Since the feasible affine set is uniformly
	separated from the origin by Lemma~\ref{lem:feasible_denominator_bound} below, the compact convex hull of the bounded feasible iterates admits a small open convex neighborhood that is still separated from the origin, and $\nabla s$ is Lipschitz there. When $1<q<2$, this condition is not automatic near zero coordinates; in that case Assumption~\ref{assump:standing}(iii) should be read as requiring the open convex neighborhood visited by the analysis to avoid the coordinate singularities of $\nabla s$.
\end{remark}

\begin{lemma}[]\label{lem:feasible_denominator_bound}
	Let $\bm{P}_A:=\bm{A}^T(\bm{A}\bm{A}^T)^\dagger$ and $\bm{v}:=\bm{P}_A\bm{b}$. Then $\bm{v}\neq \bm{0}$, and there exists a constant $\underline{\nu}>0$ such that $\|\bm{x}\|_q^p\ge \underline{\nu}$ for any $\bm{x}\in\mathcal{F}$. Moreover, there exists $\overline{\nu}<\infty$ such that $\|\bm{x}\|_q^p\le \overline{\nu}$ for every $\bm{x}\in \Omega\cap \mathcal{F}$.
\end{lemma}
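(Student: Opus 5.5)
The plan is to use the structure of the affine set $\mathcal{F}$: its minimum-norm element is $\bm{v}=\bm{P}_A\bm{b}$, and the upper bound comes from compactness of $\Omega$.

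\emph{Step 1: $\bm{v}\neq\bm{0}$.} By Assumption~\ref{assump:standing}(i), $\bm{A}$ has full row rank, so $\bm{A}\bm{A}^T$ is invertible and $(\bm{A}\bm{A}^T)^\dagger=(\bm{A}\bm{A}^T)^{-1}$. Hence
\[
\bm{A}\bm{v}=\bm{A}\bm{A}^T(\bm{A}\bm{A}^T)^{-1}\bm{b}=\bm{b}.
\]
Since $\bm{b}\neq\bm{0}$, this gives $\bm{v}\neq\bm{0}$, and also $\bm{v}\in\mathcal{F}$.

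\emph{Step 2: $\bm{v}$ is the least-$\ell_2$-norm feasible point.} Any $\bm{x}\in\mathcal{F}$ can be written as $\bm{x}=\bm{v}+\bm{h}$ with $\bm{h}\in\ker(\bm{A})$. Because $\bm{v}\in\operatorname{range}(\bm{A}^T)=\ker(\bm{A})^\perp$, we have $\langle \bm{v},\bm{h}\rangle=0$. Pythagoras then gives $\|\bm{x}\|_2^2=\|\bm{v}\|_2^2+\|\bm{h}\|_2^2\ge\|\bm{v}\|_2^2$.

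\emph{Step 3: lower bound in $\|\cdot\|_q^p$.} Use the finite-dimensional comparison $\|\bm{x}\|_q\ge n^{\min\{0,1/q-1/2\}}\|\bm{x}\|_2$, the same one used in Proposition~\ref{prop:equivalence}(c). It is trivial for $q\le 2$, and for $q>2$ it follows from H\"older's inequality. Combining with Step 2 gives
\[
\|\bm{x}\|_q^p\ \ge\ \bigl(n^{\min\{0,1/q-1/2\}}\|\bm{v}\|_2\bigr)^p=:\underline{\nu}>0
\]
for every $\bm{x}\in\mathcal{F}$.

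\emph{Step 4: upper bound.} The map $\bm{x}\mapsto\|\bm{x}\|_q^p$ is continuous, and $\Omega$ is compact by Assumption~\ref{assump:standing}(iii). So the map attains a finite maximum on $\Omega$, hence on $\Omega\cap\mathcal{F}$. Set $\overline{\nu}:=\max_{\bm{x}\in\Omega}\|\bm{x}\|_q^p$. Alternatively, one can use $\|\bm{x}\|_q\le n^{\max\{0,1/q-1/2\}}\|\bm{x}\|_2$ together with the boundedness in Assumption~\ref{assump:standing}(ii).

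The only point needing care is Step 2, namely the orthogonality $\operatorname{range}(\bm{A}^T)\perp\ker(\bm{A})$. Getting the direction of the $\ell_q$–$\ell_2$ comparison constant right in each regime of $q$ also needs attention. Everything else is routine.
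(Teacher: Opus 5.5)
Your proposal is correct and follows the paper's proof essentially step by step: the minimum-norm feasible point $\bm{v}=\bm{P}_A\bm{b}$, the orthogonal decomposition $\bm{x}=\bm{v}+\bm{z}$ with $\bm{z}\in\ker(\bm{A})$ and Pythagoras, the comparison $\|\bm{x}\|_q\ge n^{\min\{0,1/q-1/2\}}\|\bm{x}\|_2$ (your direction of the constant is right in both regimes), and compactness of $\Omega$ for the upper bound; your explicit verification that $\bm{A}\bm{v}=\bm{b}$ is a detail the paper leaves implicit. One caveat: your ``alternative'' in Step 4 via Assumption~\ref{assump:standing}(ii) only bounds the iterates rather than every point of $\Omega\cap\mathcal{F}$, so the compactness route is the one that proves the statement as written.
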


\begin{proof}
	Since $\bm{b}\neq \bm{0}$ and $\mathcal{F}\neq\emptyset$, the minimum-norm feasible point is exactly $\bm{v}=\bm{P}_A\bm{b}$, hence $\bm{v}\neq \bm{0}$. Moreover, every $\bm{x}\in\mathcal{F}$ admits the orthogonal decomposition $\bm{x}=\bm{v}+\bm{z}$, where $\bm{z}\in \ker(\bm{A})$ and $\bm{v}\perp \ker(\bm{A})$. It follows that $\|\bm{x}\|_2^2=\|\bm{v}\|_2^2+\|\bm{z}\|_2^2\ge \|\bm{v}\|_2^2$. By norm equivalence in $\mathbb{R}^n$, there exists $C_{q,2}>0$ such that $\|\bm{x}\|_q\ge C_{q,2}\|\bm{x}\|_2\ge C_{q,2}\|\bm{v}\|_2$.
	Therefore, $\|\bm{x}\|_q^p\ge \left(C_{q,2}\|\bm{v}\|_2\right)^p =:\underline{\nu}>0$ for every $\bm{x}\in\mathcal{F}$. The upper bound follows from compactness of $\Omega$ and continuity of $\bm{x}\mapsto \|\bm{x}\|_q^p$.
\end{proof}

\begin{lemma}[]\label{lem:ratio_interval}
	For every nonzero $\bm{x}\in\mathbb{R}^n$,
	$1\le \frac{\|\bm{x}\|_p^p}{\|\bm{x}\|_q^p}\le n^{1-p/q}$.	Consequently, for every outer iterate,
	$1\le \alpha^{(k)}\le \overline{\alpha}:=n^{1-p/q}$.
\end{lemma}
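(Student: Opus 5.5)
The proof has two parts: a lower bound from the nesting of $\ell_r$ norms, and an upper bound from H\"older's inequality on the support. Write $r:=q/p>1$ and work with the normalized quantity $\|\bm{x}\|_p^p/\|\bm{x}\|_q^p = \sum_i |x_i|^p \big/ \bigl(\sum_i |x_i|^{q}\bigr)^{p/q}$. Setting $a_i:=|x_i|^p\ge 0$, this becomes $\|\bm{a}\|_1/\|\bm{a}\|_r$, where $\bm{a}\neq\bm{0}$ because $\bm{x}\neq\bm{0}$. Both inequalities are then standard comparisons between the $\ell_1$ and $\ell_r$ norms of a nonnegative vector in $\mathbb{R}^n$.

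\emph{Lower bound.} We use the monotonicity $\|\bm{a}\|_r\le\|\bm{a}\|_1$ for $r\ge1$. To see it, normalize so that $\|\bm{a}\|_1=1$. Then each $a_i\in[0,1]$, so $a_i^r\le a_i$. Summing gives $\|\bm{a}\|_r^r\le 1$, hence $\|\bm{a}\|_r\le 1=\|\bm{a}\|_1$. Equivalently, $\|\bm{x}\|_q\le\|\bm{x}\|_p$ because $p\le1<q$, and raising both sides to the power $p$ gives the ratio $\ge1$.

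\emph{Upper bound.} Apply H\"older's inequality with exponents $r$ and $r'=r/(r-1)$ to $\sum_i a_i\cdot 1$:
\[
\sum_{i=1}^n a_i\le \Bigl(\sum_{i=1}^n a_i^r\Bigr)^{1/r} n^{1-1/r}=\|\bm{x}\|_q^{p}\, n^{1-p/q}.
\]
Dividing by $\|\bm{x}\|_q^p>0$ gives the bound $n^{1-p/q}$. This is the same inequality used in \eqref{eq:x0_bound}, with $s$ replaced by $n$.

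\emph{Consequence for the iterates.} For $k\ge1$, the update $\alpha^{(k)}=\|\bm{x}^{(k)}\|_p^p/\|\bm{x}^{(k)}\|_q^p$ in \eqref{eq:standard_dinkelbach_outer_loop} has this form, and so does the initialization $\alpha^{(0)}$ in Algorithm~\ref{alg:standard_dinkelbach}. It only remains to check that every iterate is nonzero. Each $\bm{x}^{(k)}$ lies in $\mathcal{F}$: this holds for $\bm{x}^{(0)}$ by assumption, and for later iterates because the indicator $I_{\mathcal{F}}$ appears in $h$. Since $\bm{b}\neq\bm{0}$, the origin is not in $\mathcal{F}$. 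Lemma~\ref{lem:feasible_denominator_bound} even gives $\|\bm{x}^{(k)}\|_q^p\ge\underline{\nu}>0$. Applying the first part of the lemma then yields $1\le\alpha^{(k)}\le\overline{\alpha}$.

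None of these steps is a real obstacle. The only point needing care is making sure the ratio is well defined at every iterate, which is exactly the nonvanishing argument just given.
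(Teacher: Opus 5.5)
Your proof is correct and follows essentially the same route as the paper, which simply invokes the standard comparison $\|\bm{x}\|_q\le\|\bm{x}\|_p\le n^{1/p-1/q}\|\bm{x}\|_q$ and raises it to the power $p$. Your substitution $a_i=|x_i|^p$ makes that comparison explicit as $\ell_1$ versus $\ell_{q/p}$ via norm monotonicity and H\"older, and your check that every iterate is feasible and hence nonzero, so that $\alpha^{(k)}$ is well defined, is a detail the paper leaves implicit.
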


\begin{proof}
	Since $p\le q$, the norm inequalities imply $\|\bm{x}\|_q\le \|\bm{x}\|_p\le n^{1/p-1/q}\|\bm{x}\|_q$. Raising both sides to the power $p$ gives $\|\bm{x}\|_q^p\le \|\bm{x}\|_p^p
	\le n^{1-p/q}\|\bm{x}\|_q^p$, which is equivalent to the stated bound.
\end{proof}

\begin{lemma}[]
	\label{lem:g_monotone_convex}
	Let $g(\alpha)$ be defined by \eqref{eq:g_alpha_def}. Then on every interval where $g$ is finite, it is strictly increasing and convex.
\end{lemma}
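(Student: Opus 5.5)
Since $g$ is a supremum of affine functions of $\alpha$, I will argue directly from that representation. For each feasible $\bm{x}\in\mathcal{F}$, set $\ell_{\bm{x}}(\alpha):=\alpha\|\bm{x}\|_q^p-\|\bm{x}\|_p^p$. Then $g(\alpha)=\sup_{\bm{x}\in\mathcal{F}}\ell_{\bm{x}}(\alpha)$.

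\textbf{Convexity.} Each $\ell_{\bm{x}}$ is affine in $\alpha$. For $\alpha_1,\alpha_2$ in an interval where $g$ is finite and $\lambda\in[0,1]$,
\[
\ell_{\bm{x}}(\lambda\alpha_1+(1-\lambda)\alpha_2)=\lambda\ell_{\bm{x}}(\alpha_1)+(1-\lambda)\ell_{\bm{x}}(\alpha_2)\le\lambda g(\alpha_1)+(1-\lambda)g(\alpha_2).
\]
Taking the supremum over $\bm{x}\in\mathcal{F}$ gives convexity. This only uses finiteness of $g$ at the endpoints, so that no $\infty-\infty$ arises.

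\textbf{Strict monotonicity.} The key input is the uniform lower bound $\|\bm{x}\|_q^p\ge\underline{\nu}>0$ on $\mathcal{F}$ from Lemma~\ref{lem:feasible_denominator_bound}. It requires $\bm{b}\neq\bm{0}$ and $\mathcal{F}\neq\emptyset$, as in Proposition~\ref{prop:equivalence}; the full row rank of Assumption~\ref{assump:standing} is not needed. Take $\alpha_1<\alpha_2$ with $g$ finite at both. For every feasible $\bm{x}$,
\[
\ell_{\bm{x}}(\alpha_2)=\ell_{\bm{x}}(\alpha_1)+(\alpha_2-\alpha_1)\|\bm{x}\|_q^p\ge\ell_{\bm{x}}(\alpha_1)+(\alpha_2-\alpha_1)\underline{\nu}.
\]
Taking the supremum over $\bm{x}$ on both sides yields $g(\alpha_2)\ge g(\alpha_1)+(\alpha_2-\alpha_1)\underline{\nu}>g(\alpha_1)$. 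A supremum of increasing functions is only increasing, not strictly so, and it is this uniform margin $\underline{\nu}>0$ that upgrades the conclusion to strict increase. It does not matter that the supremum may not be attained, because the inequality holds pointwise before the supremum is taken.

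\textbf{Main obstacle.} The only subtle point is the interval of finiteness. For $\alpha>0$, $g$ could be $+\infty$: along unbounded feasible directions, $\alpha\|\bm{x}\|_q^p-\|\bm{x}\|_p^p$ may grow when $\alpha>n^{1-p/q}$ is excluded only through Lemma~\ref{lem:ratio_interval}. I would therefore state everything on an interval $I$ with $g(I)\subset\mathbb{R}$ and avoid claiming anything outside it. I would also note in passing that $g$ is finite whenever $\alpha\le 1$, since then $\ell_{\bm{x}}(\alpha)\le\|\bm{x}\|_q^p-\|\bm{x}\|_p^p\le 0$ by Lemma~\ref{lem:ratio_interval}. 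This gives a nonempty such interval and confirms that the lemma is not vacuous.
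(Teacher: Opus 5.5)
Your proof is correct and follows essentially the same route as the paper. Convexity comes from $g$ being a pointwise supremum of the affine maps $\alpha\mapsto\alpha\|\bm{x}\|_q^p-\|\bm{x}\|_p^p$, and strict increase comes from the pointwise identity $\ell_{\bm{x}}(\alpha_2)=\ell_{\bm{x}}(\alpha_1)+(\alpha_2-\alpha_1)\|\bm{x}\|_q^p$ combined with the uniform bound $\|\bm{x}\|_q^p\ge\underline{\nu}$ of Lemma~\ref{lem:feasible_denominator_bound}, giving $g(\alpha_2)\ge g(\alpha_1)+(\alpha_2-\alpha_1)\underline{\nu}$. Your extra care about finiteness, including the correct observation that $g\le 0$ for $\alpha\le 1$, is a sound addition; the sentence about $\alpha>n^{1-p/q}$ is garbled but plays no role in the argument, and what is true there is that $g=+\infty$ whenever $\ker(\bm{A})\neq\{\bm{0}\}$.
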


\begin{proof}
	Fix $\alpha_1<\alpha_2$. For every $\bm{x}\in\mathcal{F}$, we have $\alpha_2\|\bm{x}\|_q^p-\|\bm{x}\|_p^p
	=\alpha_1\|\bm{x}\|_q^p-\|\bm{x}\|_p^p
	+(\alpha_2-\alpha_1)\|\bm{x}\|_q^p$. Using Lemma~\ref{lem:feasible_denominator_bound},  $\alpha_2\|\bm{x}\|_q^p-\|\bm{x}\|_p^p \ge \alpha_1\|\bm{x}\|_q^p-\|\bm{x}\|_p^p +(\alpha_2-\alpha_1)\underline{\nu}$. Taking the supremum over $\mathcal{F}$ yields $g(\alpha_2)\ge g(\alpha_1)+(\alpha_2-\alpha_1)\underline{\nu}>g(\alpha_1)$, which proves strict monotonicity. For convexity, note that for each fixed $\bm{x}\in\mathcal{F}$, the mapping $\alpha\mapsto \alpha\|\bm{x}\|_q^p-\|\bm{x}\|_p^p$ is affine. Since $g$ is the supremum of affine functions, it is convex.
\end{proof}

\begin{lemma}[]\label{lem:quadratic_majorization}
	Define $\Phi_\alpha(\bm{x}):= \|\bm{x}\|_p^p-\alpha\|\bm{x}\|_q^p+I(\bm{A}\bm{x}-\bm{b})$, and for each $k$ define the full linearized model
$
		\widetilde{\Phi}_k(\bm{x}):= h(\bm{x}) -\alpha^{(k)} \Bigl(\|\bm{x}^{(k)}\|_q^p +\langle \nabla s(\bm{x}^{(k)}),\bm{x}-\bm{x}^{(k)}\rangle\Bigr) +\frac{\beta}{2}\|\bm{x}-\bm{x}^{(k)}\|_2^2.
$
	Then $\widetilde{\Phi}_k$ and \eqref{eq:outer_subproblem} have the same minimizers.
	Moreover, for every $\bm{x}\in \Omega\cap\mathcal{F}$, one has
	\begin{equation}\label{eq:majorization_exact}
		\widetilde{\Phi}_k(\bm{x})
		\ge
		\Phi_{\alpha^{(k)}}(\bm{x})
		+\frac{\beta-\alpha^{(k)}L_s}{2}\|\bm{x}-\bm{x}^{(k)}\|_2^2.
	\end{equation}
\end{lemma}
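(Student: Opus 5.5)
The plan has two parts, matching the two assertions of Lemma~\ref{lem:quadratic_majorization}: first identify the minimizers of $\widetilde{\Phi}_k$ with those of \eqref{eq:outer_subproblem}, then prove the majorization \eqref{eq:majorization_exact} via the descent lemma for $s$ on $\mathcal{U}$.

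For the first claim, I expand $\widetilde{\Phi}_k$. The terms $-\alpha^{(k)}\|\bm{x}^{(k)}\|_q^p$ and $\alpha^{(k)}\langle \nabla s(\bm{x}^{(k)}),\bm{x}^{(k)}\rangle$ do not depend on $\bm{x}$. What remains is $h(\bm{x})-\alpha^{(k)}\langle\nabla s(\bm{x}^{(k)}),\bm{x}\rangle+\frac{\beta}{2}\|\bm{x}-\bm{x}^{(k)}\|_2^2$. It then suffices to check that $\alpha^{(k)}\nabla s(\bm{x}^{(k)})=\bm{c}^{(k)}$. Since $\bm{x}^{(k)}\neq\bm{0}$ by Lemma~\ref{lem:feasible_denominator_bound}, $s=(\|\cdot\|_q^q)^{p/q}$ is differentiable there. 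The chain rule gives $\nabla s(\bm{x})=\frac{p}{q}\|\bm{x}\|_q^{p-q}\cdot q\,\operatorname{sign}(\bm{x})\odot|\bm{x}|^{q-1}$, which matches the definition of $\bm{c}^{(k)}$. So the two objectives differ by a constant, and their argmin sets coincide.

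For the majorization, I fix $\bm{x}\in\Omega\cap\mathcal{F}$. Both $\bm{x}$ and $\bm{x}^{(k)}$ lie in $\Omega\subset\mathcal{U}$, and $\mathcal{U}$ is convex, so the whole segment between them lies in $\mathcal{U}$. On that segment $\nabla s$ is $L_s$-Lipschitz, and the standard descent lemma (integrate $\nabla s$ along the segment and apply Cauchy--Schwarz) yields
\[
s(\bm{x})\le s(\bm{x}^{(k)})+\langle\nabla s(\bm{x}^{(k)}),\bm{x}-\bm{x}^{(k)}\rangle+\tfrac{L_s}{2}\|\bm{x}-\bm{x}^{(k)}\|_2^2 .
\]
Multiply this by $-\alpha^{(k)}$. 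This is legitimate because $\alpha^{(k)}\ge1>0$ by Lemma~\ref{lem:ratio_interval}, and multiplying by a negative number reverses the inequality:
\[
-\alpha^{(k)}\bigl(s(\bm{x}^{(k)})+\langle\nabla s(\bm{x}^{(k)}),\bm{x}-\bm{x}^{(k)}\rangle\bigr)\ge -\alpha^{(k)}s(\bm{x})-\tfrac{\alpha^{(k)}L_s}{2}\|\bm{x}-\bm{x}^{(k)}\|_2^2 .
\]
Now add $h(\bm{x})+\frac{\beta}{2}\|\bm{x}-\bm{x}^{(k)}\|_2^2$ to both sides. Because $\bm{x}\in\mathcal{F}$, $h(\bm{x})=\|\bm{x}\|_p^p$ and both indicators vanish. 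The right-hand side then equals $\Phi_{\alpha^{(k)}}(\bm{x})+\frac{\beta-\alpha^{(k)}L_s}{2}\|\bm{x}-\bm{x}^{(k)}\|_2^2$, which is \eqref{eq:majorization_exact}.

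The main obstacle is justifying the descent lemma. This requires differentiability of $s$ along the entire segment, not just at the endpoints, which is exactly why Assumption~\ref{assump:standing}(iii) places $\Omega$ inside an open convex $\mathcal{U}$ on which $\nabla s$ is Lipschitz. I will invoke that assumption directly rather than prove smoothness.
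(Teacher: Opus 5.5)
Your structure matches the paper's: a constant shift for the first claim, then the descent lemma on the segment inside $\mathcal{U}$ for the second. Your chain-rule check that $\bm{c}^{(k)}=\alpha^{(k)}\nabla s(\bm{x}^{(k)})$ is correct, and more explicit than the paper's one-line remark. However, the key step of the majorization has a sign error.

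Write $\ell_k(\bm{x}):=s(\bm{x}^{(k)})+\langle\nabla s(\bm{x}^{(k)}),\bm{x}-\bm{x}^{(k)}\rangle$ and $d:=\|\bm{x}-\bm{x}^{(k)}\|_2$. You invoke the upper half of the descent lemma, $s(\bm{x})\le \ell_k(\bm{x})+\frac{L_s}{2}d^2$. Multiplying this by $-\alpha^{(k)}<0$ gives $-\alpha^{(k)}s(\bm{x})\ge -\alpha^{(k)}\ell_k(\bm{x})-\frac{\alpha^{(k)}L_s}{2}d^2$, that is, $-\alpha^{(k)}\ell_k(\bm{x})\le -\alpha^{(k)}s(\bm{x})+\frac{\alpha^{(k)}L_s}{2}d^2$. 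After adding $h(\bm{x})+\frac{\beta}{2}d^2$, this yields $\widetilde{\Phi}_k(\bm{x})\le \Phi_{\alpha^{(k)}}(\bm{x})+\frac{\beta+\alpha^{(k)}L_s}{2}d^2$, which is the wrong direction. The inequality you actually display, $-\alpha^{(k)}\ell_k(\bm{x})\ge -\alpha^{(k)}s(\bm{x})-\frac{\alpha^{(k)}L_s}{2}d^2$, is equivalent (after dividing by $\alpha^{(k)}>0$) to $s(\bm{x})\ge \ell_k(\bm{x})-\frac{L_s}{2}d^2$. That is the lower half of the descent lemma, and it does not follow from the bound you stated. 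For instance, $s=1$, $\ell_k=4$, $\frac{L_s}{2}d^2=2$ satisfies your upper bound but violates your displayed inequality.

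Since $\widetilde{\Phi}_k-\Phi_{\alpha^{(k)}}=\alpha^{(k)}\bigl(s-\ell_k\bigr)+\frac{\beta}{2}d^2$ on $\mathcal{F}$, a lower bound on $\widetilde{\Phi}_k$ requires a lower bound on $s-\ell_k$. This is exactly the inequality the paper uses. The upper quadratic bound is what one would use to majorize $+\alpha s$, not $-\alpha s$. Moreover, $s=\|\cdot\|_q^p$ is not convex for $p<1$, so the lower bound is the substantive ingredient.

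The repair is immediate with the argument you already sketched. The identity $s(\bm{x})-\ell_k(\bm{x})=\int_0^1\langle\nabla s(\bm{x}^{(k)}+t(\bm{x}-\bm{x}^{(k)}))-\nabla s(\bm{x}^{(k)}),\bm{x}-\bm{x}^{(k)}\rangle\,dt$, combined with Cauchy--Schwarz and the Lipschitz bound on the segment in $\mathcal{U}$, gives the two-sided estimate $|s(\bm{x})-\ell_k(\bm{x})|\le\frac{L_s}{2}d^2$. Take the lower side and multiply by $+\alpha^{(k)}$, and the rest of your proof goes through unchanged.
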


\begin{proof}
	The first claim is immediate, because \eqref{eq:outer_subproblem} differs from
	$\widetilde{\Phi}_k$ only by a constant independent of $\bm{x}$.
	
	To prove the majorization inequality~\eqref{eq:majorization_exact}, we directly evaluate the difference $\widetilde{\Phi}_k(\bm{x}) - \Phi_{\alpha^{(k)}}(\bm{x})$. By substituting the definition of $\Phi_{\alpha^{(k)}}(\bm{x}) = h(\bm{x}) - \alpha^{(k)}s(\bm{x})$ and the definition of $\widetilde{\Phi}_k(\bm{x})$ into $\widetilde{\Phi}_k(\bm{x}) - \Phi_{\alpha^{(k)}}(\bm{x})$, the common term $h(\bm{x})$ cancels out, yielding
	\begin{equation}
		\widetilde{\Phi}_k(\bm{x}) - \Phi_{\alpha^{(k)}}(\bm{x}) = \alpha^{(k)} \left[ s(\bm{x}) - \Bigl(\|\bm{x}^{(k)}\|_q^p +\langle \nabla s(\bm{x}^{(k)}),\bm{x}-\bm{x}^{(k)}\rangle\Bigr) \right] + \frac{\beta}{2}\|\bm{x}-\bm{x}^{(k)}\|_2^2.
		\label{eq:difference_expanded}
	\end{equation}
	Because both $\bm{x}$ and $\bm{x}^{(k)}$ belong to $\Omega\subset\mathcal{U}$ and $\mathcal{U}$ is convex, the segment joining them is contained in $\mathcal{U}$. Since $\nabla s$ is $L_s$-Lipschitz continuous on $\mathcal{U}$, the descent lemma gives
$
		s(\bm{x}) \ge \|\bm{x}^{(k)}\|_q^p + \langle \nabla s(\bm{x}^{(k)}),\bm{x}-\bm{x}^{(k)}\rangle - \frac{L_s}{2}\|\bm{x}-\bm{x}^{(k)}\|_2^2.
$
	Rearranging this inequality, we have
	\begin{equation}\label{eq:lower_bound_descent}
		s(\bm{x}) - \Bigl(\|\bm{x}^{(k)}\|_q^p +\langle \nabla s(\bm{x}^{(k)}),\bm{x}-\bm{x}^{(k)}\rangle\Bigr) \ge - \frac{L_s}{2}\|\bm{x}-\bm{x}^{(k)}\|_2^2.
	\end{equation}
	Since $\alpha^{(k)} > 0$, we can substitute the lower bound~\eqref{eq:lower_bound_descent} into the bracketed term in~\eqref{eq:difference_expanded} to obtain
	\begin{equation*}
		\widetilde{\Phi}_k(\bm{x}) - \Phi_{\alpha^{(k)}}(\bm{x}) \ge \alpha^{(k)} \left( - \frac{L_s}{2}\|\bm{x}-\bm{x}^{(k)}\|_2^2 \right) + \frac{\beta}{2}\|\bm{x}-\bm{x}^{(k)}\|_2^2 
		= \frac{\beta - \alpha^{(k)} L_s}{2} \|\bm{x}-\bm{x}^{(k)}\|_2^2.
	\end{equation*}
	Moving $\Phi_{\alpha^{(k)}}(\bm{x})$ to the right-hand side of the inequality produces
$
		\widetilde{\Phi}_k(\bm{x}) \ge \Phi_{\alpha^{(k)}}(\bm{x}) + \frac{\beta-\alpha^{(k)}L_s}{2}\|\bm{x}-\bm{x}^{(k)}\|_2^2.
$
	This completes the proof.
\end{proof}

\begin{theorem}[]\label{thm:outer_sufficient_decrease}
	Let $\{(\bm{x}^{(k)},\alpha^{(k)})\}$ be generated by \eqref{eq:standard_dinkelbach_outer_loop}. Suppose that Assumption~\ref{assump:standing} holds. Then, for every $k\ge 0$,
	\begin{equation}\label{eq:delta_lower_bound}
		\Delta_k \ge \frac{\beta-\alpha^{(k)}L_s}{2}\|\bm{x}^{(k+1)}-\bm{x}^{(k)}\|_2^2 \ge \frac{\beta-\overline{\alpha}L_s}{2}\|\bm{x}^{(k+1)}-\bm{x}^{(k)}\|_2^2 \ge 0.
	\end{equation}
	Consequently,
	\begin{equation}\label{eq:alpha_decrease_final}
		\alpha^{(k)}-\alpha^{(k+1)} = \frac{\Delta_k}{\|\bm{x}^{(k+1)}\|_q^p} \ge \frac{\beta-\overline{\alpha}L_s}{2\overline{\nu}} \|\bm{x}^{(k+1)}-\bm{x}^{(k)}\|_2^2.
	\end{equation}
	Hence $\{\alpha^{(k)}\}$ is monotonically decreasing and convergent, and $\|\bm{x}^{(k+1)}-\bm{x}^{(k)}\|_2\to 0$.
\end{theorem}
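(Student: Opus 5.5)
The plan is to compare the value of the subproblem objective $\widetilde{\Phi}_k$ at the new iterate $\bm{x}^{(k+1)}$ with its value at the old iterate $\bm{x}^{(k)}$. Then Lemma~\ref{lem:quadratic_majorization} converts that comparison into a statement about $\Phi_{\alpha^{(k)}}$.

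First, since $\bm{x}^{(k)}\in\mathcal{F}$, the linear and proximal terms vanish at $\bm{x}=\bm{x}^{(k)}$. This gives
\[
\widetilde{\Phi}_k(\bm{x}^{(k)})=\|\bm{x}^{(k)}\|_p^p-\alpha^{(k)}\|\bm{x}^{(k)}\|_q^p=\Phi_{\alpha^{(k)}}(\bm{x}^{(k)}).
\]
The right-hand side is zero because $\alpha^{(k)}$ is the ratio at $\bm{x}^{(k)}$. This holds by definition for $k\ge1$, and for $k=0$ by the initialization in Algorithm~\ref{alg:standard_dinkelbach}.

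Second, $\bm{x}^{(k+1)}$ minimizes \eqref{eq:outer_subproblem}, and therefore also minimizes $\widetilde{\Phi}_k$ by the first claim of Lemma~\ref{lem:quadratic_majorization}. Hence $\widetilde{\Phi}_k(\bm{x}^{(k+1)})\le\widetilde{\Phi}_k(\bm{x}^{(k)})=0$. Since $\bm{x}^{(k+1)}\in\Omega\cap\mathcal{F}$ by Assumption~\ref{assump:standing}(iii), the majorization \eqref{eq:majorization_exact} applies at $\bm{x}=\bm{x}^{(k+1)}$ and yields
\[
0\ge \Phi_{\alpha^{(k)}}(\bm{x}^{(k+1)})+\frac{\beta-\alpha^{(k)}L_s}{2}\|\bm{x}^{(k+1)}-\bm{x}^{(k)}\|_2^2 .
\]
Because $\Phi_{\alpha^{(k)}}(\bm{x}^{(k+1)})=-\Delta_k$, this is exactly the first inequality in \eqref{eq:delta_lower_bound}. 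The second inequality follows from $\alpha^{(k)}\le\overline{\alpha}$ (Lemma~\ref{lem:ratio_interval}). The third follows from $\beta>\overline{\alpha}L_s$ (Assumption~\ref{assump:standing}(iv)).

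Third, the identity in \eqref{eq:alpha_decrease_final} is the algebraic rewriting already recorded in \eqref{eq:standard_dinkelbach_outer_loop}. Dividing \eqref{eq:delta_lower_bound} by $\|\bm{x}^{(k+1)}\|_q^p\le\overline{\nu}$ (Lemma~\ref{lem:feasible_denominator_bound}, using $\bm{x}^{(k+1)}\in\Omega\cap\mathcal{F}$) gives the inequality in \eqref{eq:alpha_decrease_final}. This division is valid because $\Delta_k\ge0$. Consequently $\{\alpha^{(k)}\}$ is nonincreasing. It is bounded below by $1$ by Lemma~\ref{lem:ratio_interval}, so it converges. Summing \eqref{eq:alpha_decrease_final} telescopically gives
\[
\sum_k\|\bm{x}^{(k+1)}-\bm{x}^{(k)}\|_2^2\le \frac{2\overline{\nu}}{\beta-\overline{\alpha}L_s}\bigl(\alpha^{(0)}-1\bigr)<\infty,
\]
and hence $\|\bm{x}^{(k+1)}-\bm{x}^{(k)}\|_2\to0$.

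The main point needing care is the applicability of Lemma~\ref{lem:quadratic_majorization} at $\bm{x}^{(k+1)}$. The descent lemma requires both points to lie in the convex set $\mathcal{U}$ on which $\nabla s$ is Lipschitz. This is supplied by Assumption~\ref{assump:standing}(iii), which places every iterate, including $\bm{x}^{(k+1)}$, in $\Omega\cap\mathcal{F}$. I would also check two smaller points. First, feasibility of $\bm{x}^{(k+1)}$ holds because $h$ contains $I_{\mathcal{F}}$, so no infeasible point can be a minimizer. Second, the equality $\alpha^{(k)}=\|\bm{x}^{(k)}\|_p^p/\|\bm{x}^{(k)}\|_q^p$ needs $\|\bm{x}^{(k)}\|_q>0$, which is guaranteed by Lemma~\ref{lem:feasible_denominator_bound}.
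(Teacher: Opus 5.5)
Your proposal is correct and follows the paper's proof essentially step for step: you evaluate $\widetilde{\Phi}_k$ at $\bm{x}^{(k)}$, where it equals $0$, and at the minimizer $\bm{x}^{(k+1)}$, invoke the majorization of Lemma~\ref{lem:quadratic_majorization}, use $\alpha^{(k)}\le\overline{\alpha}$ and $\|\bm{x}^{(k+1)}\|_q^p\le\overline{\nu}$, and then telescope. The only cosmetic difference is that you bound the telescoped sum by $\alpha^{(0)}-1$ via Lemma~\ref{lem:ratio_interval}, whereas the paper uses $\alpha^{(0)}-\alpha^{(\infty)}$; both are equally valid.
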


\begin{proof}
	Since $\bm{x}^{(k+1)}$ minimizes \eqref{eq:outer_subproblem}, equivalently it
	minimizes $\widetilde{\Phi}_k$. Because
	$\alpha^{(k)}=\|\bm{x}^{(k)}\|_p^p/\|\bm{x}^{(k)}\|_q^p$, we have
$
		\widetilde{\Phi}_k(\bm{x}^{(k)}) =\Phi_{\alpha^{(k)}}(\bm{x}^{(k)}) =\|\bm{x}^{(k)}\|_p^p-\alpha^{(k)}\|\bm{x}^{(k)}\|_q^p=0.
$
	Therefore, $\widetilde{\Phi}_k(\bm{x}^{(k+1)})\le \widetilde{\Phi}_k(\bm{x}^{(k)})=0$. Using Lemma~\ref{lem:quadratic_majorization} at $\bm{x}=\bm{x}^{(k+1)}$, we obtain
	\begin{equation*}
		0\ge \widetilde{\Phi}_k(\bm{x}^{(k+1)}) \ge \Phi_{\alpha^{(k)}}(\bm{x}^{(k+1)}) +\frac{\beta-\alpha^{(k)}L_s}{2} \|\bm{x}^{(k+1)}-\bm{x}^{(k)}\|_2^2.
	\end{equation*}
	Expanding $\Phi_{\alpha^{(k)}}(\bm{x}^{(k+1)}) = \|\bm{x}^{(k+1)}\|_p^p - \alpha^{(k)}\|\bm{x}^{(k+1)}\|_q^p$ and rearranging the terms by moving the objective evaluation to the left-hand side yields
$
		\Delta_k=\alpha^{(k)}\|\bm{x}^{(k+1)}\|_q^p-\|\bm{x}^{(k+1)}\|_p^p \ge \frac{\beta-\alpha^{(k)}L_s}{2} \|\bm{x}^{(k+1)}-\bm{x}^{(k)}\|_2^2.
$
	Using $\alpha^{(k)}\le\overline{\alpha}$ from Lemma~\ref{lem:ratio_interval} and $\beta>\overline{\alpha}L_s$, this proves \eqref{eq:delta_lower_bound}. Moreover, Lemma~\ref{lem:feasible_denominator_bound} gives $\|\bm{x}^{(k+1)}\|_q^p\le \overline{\nu}$, and therefore
$
		\alpha^{(k)} - \alpha^{(k+1)}
		=\frac{\Delta_k}{\|\bm{x}^{(k+1)}\|_q^p}
		\ge \frac{\Delta_k}{\overline{\nu}}
		\ge \frac{\beta-\overline{\alpha}L_s}{2\overline{\nu}}\|\bm{x}^{(k+1)}-\bm{x}^{(k)}\|_2^2,
$
	which is \eqref{eq:alpha_decrease_final}.
	
	Lemma~\ref{lem:ratio_interval} shows that $\{\alpha^{(k)}\}$ is bounded below
	by $1$, while \eqref{eq:alpha_decrease_final} shows it is decreasing. Hence
	$\{\alpha^{(k)}\}$ converges. Summing the sufficient decrease inequality~\eqref{eq:alpha_decrease_final} from $k=0$ to $\infty$ yields
$
		\sum_{k=0}^{\infty}\|\bm{x}^{(k+1)}-\bm{x}^{(k)}\|_2^2 \le \frac{2\overline{\nu}}{\beta-\overline{\alpha}L_s} (\alpha^{(0)} - \alpha^{(\infty)}) < \infty.
$
	Hence concluding that $\|\bm{x}^{(k+1)}-\bm{x}^{(k)}\|_2\to 0$.
\end{proof}

\begin{theorem}[]\label{thm:cluster_critical_final}
	Let $\{(\bm{x}^{(k)},\alpha^{(k)})\}$ be generated by
	\eqref{eq:standard_dinkelbach_outer_loop}. Suppose that Assumption~\ref{assump:standing} holds. Then every cluster point $\bar{\bm{x}}$ of
	$\{\bm{x}^{(k)}\}$ satisfies
	\begin{equation}\label{eq:critical_stationary_final}
		\bm{0}
		\in
		\partial h(\bar{\bm{x}})
		-\bar{\alpha}\nabla s(\bar{\bm{x}}),
		\qquad
		\bar{\alpha}:=
		\frac{\|\bar{\bm{x}}\|_p^p}{\|\bar{\bm{x}}\|_q^p}.
	\end{equation}
	Equivalently,
	\begin{equation}\label{eq:critical_ratio_final}
		\bm{0}
		\in
		\|\bar{\bm{x}}\|_q^p\,\partial h(\bar{\bm{x}})
		-\|\bar{\bm{x}}\|_p^p\,\nabla s(\bar{\bm{x}}).
	\end{equation}
	Hence every cluster point is a limiting critical point of the original constrained
	ratio model \eqref{eq:ratio_model}.
\end{theorem}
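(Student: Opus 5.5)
Let $\bar{\bm{x}}$ be a cluster point and $\bm{x}^{(k_j)}\to\bar{\bm{x}}$ a convergent subsequence. By Theorem~\ref{thm:outer_sufficient_decrease}, $\|\bm{x}^{(k+1)}-\bm{x}^{(k)}\|_2\to 0$, so also $\bm{x}^{(k_j+1)}\to\bar{\bm{x}}$. Since $\mathcal{F}$ is closed, $\bar{\bm{x}}\in\mathcal{F}$, and Lemma~\ref{lem:feasible_denominator_bound} gives $\|\bar{\bm{x}}\|_q^p\ge\underline{\nu}>0$. Hence $\bar{\alpha}$ is well defined. Because the ratio is continuous away from the origin, $\alpha^{(k_j)}\to\bar{\alpha}$; the whole sequence $\{\alpha^{(k)}\}$ converges, so its limit equals $\bar{\alpha}$. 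Moreover $\bar{\bm{x}}\in\Omega\subset\mathcal{U}$ because $\Omega$ is compact, so $\nabla s$ is continuous at $\bar{\bm{x}}$. This gives $\bm{c}^{(k_j)}=\alpha^{(k_j)}\nabla s(\bm{x}^{(k_j)})\to\bar{\alpha}\nabla s(\bar{\bm{x}})$.

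Next I will write the optimality condition of the subproblem \eqref{eq:outer_subproblem}. The point $\bm{x}^{(k+1)}$ is a global minimizer of $h-\langle\bm{c}^{(k)},\cdot\rangle+\frac{\beta}{2}\|\cdot-\bm{x}^{(k)}\|_2^2$, and the last two terms are smooth. By Fermat's rule for the regular subdifferential and the smooth sum rule,
\[
\bm{v}^{(k+1)}:=\bm{c}^{(k)}-\beta(\bm{x}^{(k+1)}-\bm{x}^{(k)})\in\hat{\partial}h(\bm{x}^{(k+1)}).
\]
Along the subsequence, $\bm{v}^{(k_j+1)}\to\bar{\alpha}\nabla s(\bar{\bm{x}})$ because the proximal term vanishes.

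To pass to the limiting subdifferential, I also need $h(\bm{x}^{(k_j+1)})\to h(\bar{\bm{x}})$. The indicator part is identically zero along feasible points, and $\|\cdot\|_p^p$ is continuous, so this holds directly; no minimizer-comparison argument is needed. The definition of $\partial h$ then yields $\bar{\alpha}\nabla s(\bar{\bm{x}})\in\partial h(\bar{\bm{x}})$, which is \eqref{eq:critical_stationary_final}. Multiplying by $\|\bar{\bm{x}}\|_q^p>0$ gives \eqref{eq:critical_ratio_final}.

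The final step is interpreting this as limiting criticality of the ratio model $F=h/s$ restricted to $\mathcal{F}$. I would use the quotient rule for limiting subdifferentials when the denominator is $C^1$ and positive near $\bar{\bm{x}}$. Locally, $F(\bm{x})-F(\bar{\bm{x}})=\bigl(h(\bm{x})-\bar{\alpha}s(\bm{x})\bigr)/s(\bm{x})$, and this gives $\partial F(\bar{\bm{x}})=\bigl(\partial h(\bar{\bm{x}})-\bar{\alpha}\nabla s(\bar{\bm{x}})\bigr)/s(\bar{\bm{x}})$. I expect this step to be the main obstacle. It requires the calculus rule for a product of a smooth positive function with $h-\bar{\alpha}s$ near a point where that difference vanishes. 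It also needs care because $h$ is only lower semicontinuous and the constraint is built into $h$. I would handle it by verifying the inclusion directly at the level of regular subgradients along the approximating sequence, then passing to limits.
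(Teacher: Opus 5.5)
Your proposal is correct and follows essentially the paper's route. In both, the subproblem optimality condition puts $\bm{c}^{(k)}-\beta(\bm{x}^{(k+1)}-\bm{x}^{(k)})$ in the subdifferential of $h$ at $\bm{x}^{(k+1)}$. Then $\|\bm{x}^{(k+1)}-\bm{x}^{(k)}\|_2\to0$, continuity of $h$ on $\mathcal{F}$, closedness of the limiting subdifferential, and continuity of the ratio (whose denominator is bounded below by Lemma~\ref{lem:feasible_denominator_bound}) give \eqref{eq:critical_stationary_final}. Your use of Fermat's rule with regular subgradients is slightly cleaner than the paper's version, and you rightly skip its residual bound, which is not needed.

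Your one addition is the quotient-rule step. The paper does not prove it; it simply reads \eqref{eq:critical_ratio_final} as criticality of the ratio model. The step is easier than you expect. If $\alpha=h(\bm{x})/s(\bm{x})$ and $\bm{v}\in\hat{\partial}h(\bm{x})$, then $(\bm{v}-\alpha\nabla s(\bm{x}))/s(\bm{x})\in\hat{\partial}(h/s)(\bm{x})$. The reason is that $h-\alpha s$ is continuous on $\mathcal{F}$, vanishes at $\bm{x}$, and is $+\infty$ off $\mathcal{F}$, so the correction term $(h-\alpha s)(\bm{y})\bigl(1/s(\bm{y})-1/s(\bm{x})\bigr)$ is $o(\|\bm{y}-\bm{x}\|_2)$. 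Applying this along your own sequence $\bm{x}^{(k_j+1)}$ with $\bm{v}^{(k_j+1)}$ gives regular subgradients of $h/s$ that tend to $\bm{0}$, hence $\bm{0}\in\partial(h/s)(\bar{\bm{x}})$.
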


\begin{proof}
	The optimality condition of \eqref{eq:outer_subproblem} gives
$
		\bm{0}\in\partial h(\bm{x}^{(k+1)})-\bm{c}^{(k)} +\beta(\bm{x}^{(k+1)}-\bm{x}^{(k)}).
$
	Since $\bm{c}^{(k)}=\alpha^{(k)}\nabla s(\bm{x}^{(k)})$, there exists
	$\boldsymbol{\xi}^{(k+1)}\in \partial h(\bm{x}^{(k+1)})$ such that
$
		\boldsymbol{\xi}^{(k+1)}-\alpha^{(k)}\nabla s(\bm{x}^{(k)})+\beta(\bm{x}^{(k+1)}-\bm{x}^{(k)})=\bm{0}.
$
	Define $\bm{r}^{(k+1)}:=\boldsymbol{\xi}^{(k+1)}-\alpha^{(k)}\nabla s(\bm{x}^{(k+1)})\in\partial h(\bm{x}^{(k+1)})-\alpha^{(k)}\nabla s(\bm{x}^{(k+1)})$. Then
	\begin{equation*}
		\|\bm{r}^{(k+1)}\|_2\le\beta\|\bm{x}^{(k+1)}-\bm{x}^{(k)}\|_2+\alpha^{(k)}\|\nabla s(\bm{x}^{(k+1)})-\nabla s(\bm{x}^{(k)})\|_2.
	\end{equation*}
	By Lemma~\ref{lem:ratio_interval} and the Lipschitz continuity of $\nabla s$, one has
$
		\|\bm{r}^{(k+1)}\|_2 \le (\beta+\overline{\alpha}L_s) \|\bm{x}^{(k+1)}-\bm{x}^{(k)}\|_2.
$
	It follows from $\bm{r}^{(k+1)} \in \partial h(\bm{x}^{(k+1)}) - \alpha^{(k)} \nabla s(\bm{x}^{(k+1)})$ that
$
	\operatorname{dist}\!\left(
	\bm{0},
	\partial h(\bm{x}^{(k+1)}) - \alpha^{(k)} \nabla s(\bm{x}^{(k+1)})
	\right)
	\le \|\bm{r}^{(k+1)}\|.
$
	Theorem~\ref{thm:outer_sufficient_decrease} implies
	$\|\bm{x}^{(k+1)}-\bm{x}^{(k)}\|_2\to 0$, it follows that
$$
		\operatorname{dist}\!\left(\bm{0},\partial h(\bm{x}^{(k+1)})-\alpha^{(k)}\nabla s(\bm{x}^{(k+1)})\right)\to 0.
$$
	Let $\bar{\bm{x}}$ be a cluster point of $\{\bm{x}^{(k)}\}$. Passing to a subsequence if necessary, we may assume $\bm{x}^{(k_j)}\to \bar{\bm{x}}$. Since
	$\|\bm{x}^{(k+1)}-\bm{x}^{(k)}\|_2\to 0$, we also have $\bm{x}^{(k_j+1)}\to \bar{\bm{x}}$. Because $\{\alpha^{(k)}\}$ converges by Theorem~\ref{thm:outer_sufficient_decrease}, there exists $\bar{\alpha}$ such that $\alpha^{(k_j)}\to \bar{\alpha}$. Moreover, from the optimality condition, one has
$
	\boldsymbol{\xi}^{(k_j+1)}
	=
	\alpha^{(k_j)}\nabla s(\bm{x}^{(k_j)})
	-\beta(\bm{x}^{(k_j+1)}-\bm{x}^{(k_j)})
	\to
	\bar{\alpha}\nabla s(\bar{\bm{x}}).
$
	Since $\boldsymbol{\xi}^{(k_j+1)}\in\partial h(\bm{x}^{(k_j+1)})$, $\bm{x}^{(k_j+1)}\to\bar{\bm{x}}$, and $h(\bm{x}^{(k_j+1)})=\|\bm{x}^{(k_j+1)}\|_p^p\to\|\bar{\bm{x}}\|_p^p=h(\bar{\bm{x}})$, the closedness of the limiting-subdifferential graph yields $\bar{\alpha}\nabla s(\bar{\bm{x}})\in\partial h(\bar{\bm{x}})$, that is, $\bm{0}\in \partial h(\bar{\bm{x}})-\bar{\alpha}\nabla s(\bar{\bm{x}})$. Lemma~\ref{lem:feasible_denominator_bound} ensures that $\|\bm{x}\|_q^p$ is uniformly bounded away from zero on $\mathcal{F}$. Consequently, the ratio map is continuous, which implies that 
	\begin{equation*}
		\bar{\alpha}=\lim_{j\to\infty}\alpha^{(k_j+1)}=\lim_{j\to\infty}\frac{\|\bm{x}^{(k_j+1)}\|_p^p}{\|\bm{x}^{(k_j+1)}\|_q^p}=\frac{\|\bar{\bm{x}}\|_p^p}{\|\bar{\bm{x}}\|_q^p}.
	\end{equation*}
	This proves \eqref{eq:critical_stationary_final}. Multiplying
	\eqref{eq:critical_stationary_final} by $\|\bar{\bm{x}}\|_q^p>0$ gives
	\eqref{eq:critical_ratio_final}.
\end{proof}

\section{Numerical Experiments}\label{sec:NumericalExperiments}

This section evaluates the proposed PLDM algorithm for solving the $\mathcal{Q}_{p,q}$-minimization problem in sparse signal recovery. All experiments were performed in MATLAB R2025b on a desktop computer equipped with an Intel Core i9-12900H processor at 2.50 GHz and 16 GB RAM. We consider the noiseless compressed sensing model $\bm{b}=\bm{A}\bm{x}^\star$, where $\bm{A}\in\mathbb{R}^{m\times n}$, $m=64$, and $n=1024$. The sparsity level is varied over $k\in\{2,4,\ldots,30\}$. For each trial, the support of the ground-truth signal $\bm{x}^\star$ is sampled uniformly at random, subject to the separation constraint described below when the oversampled DCT matrix is used. To test recovery under high dynamic range, the nonzero entries are generated with random signs and log-uniform magnitudes, using the interval $[1,10^3]$ for the correlated Gaussian matrices and $[1,10^5]$ for the oversampled DCT matrices.

Two sensing matrices are used. The first is a correlated Gaussian matrix, where the rows of $\bm{A}$ are independently sampled from $\mathcal{N}(\bm{0},\Sigma_r)$ with $\Sigma_r=(1-r)\bm{I}+r\bm{1}\bm{1}^{\top}$, and $r\in\{0.3,0.9\}$ controls the correlation level. The second is the oversampled DCT matrix $A_{ij}=\frac{1}{\sqrt{m}}\cos(\frac{2\pi w_i j}{F})$ for every $i=1,\ldots,m$ and $j=1,\ldots,n$, where $w_i\overset{\mathrm{i.i.d.}}{\sim}\mathcal{U}([0,1])$. Larger $F$ gives a more coherent sensing matrix. Following the common protocol for highly coherent dictionaries \cite{fannjiang2012coherence}, the support of $\bm{x}^\star$ in the DCT experiments is sampled with minimum separation $2F$. We report results for $F=10$ and $F=15$.

A run is declared successful if $\frac{\|\hat{\bm{x}}-\bm{x}^\star\|_2}{\|\bm{x}^\star\|_2}<10^{-3}$. For unsuccessful trials, we further distinguish between two failure modes. If the objective value at the returned point $\hat{\bm{x}}$ is smaller than that at $\bm{x}^\star$, the trial is counted as a model failure; in this case, the planted sparse vector is not favored by the model for that instance. Otherwise, the trial is counted as an algorithm failure  ; this indicates that the model may still be appropriate, but the numerical solver failed to reach a successful feasible point. We also report the reconstruction signal-to-noise ratio
$
    \operatorname{SNR}=20\log_{10}
    \left(\frac{\|\bm{x}^\star\|_2}{\|\hat{\bm{x}}-\bm{x}^\star\|_2}\right).
$
Unless otherwise stated, the constrained $\ell_1$ solution computed by Gurobi is used as the initialization. If this initialization fails, the minimum-norm feasible point $\bm{v}=\bm{A}^T(\bm{A}\bm{A}^T)^\dagger\bm{b}$ is used as a fallback.

\subsection{Parameter Sensitivity of $\mathcal{Q}_{p,q}$}

We first study how the proposed model depends on the parameter pair $(p,q)$. The parameter grid is given by $p\in\{0.1,0.3,0.5,0.7,0.9,1.0\}$ and $q\in\{1.2,1.4,\ldots,3.0\}$. For each triple $(p,q,k)$, we conduct $50$ Monte Carlo trials and average the results over trials and sparsity levels when drawing the heat maps.
\begin{figure}[htbp]
	\centering
	\begin{subfigure}[b]{0.96\textwidth}
		\centering
		\includegraphics[scale=0.2]{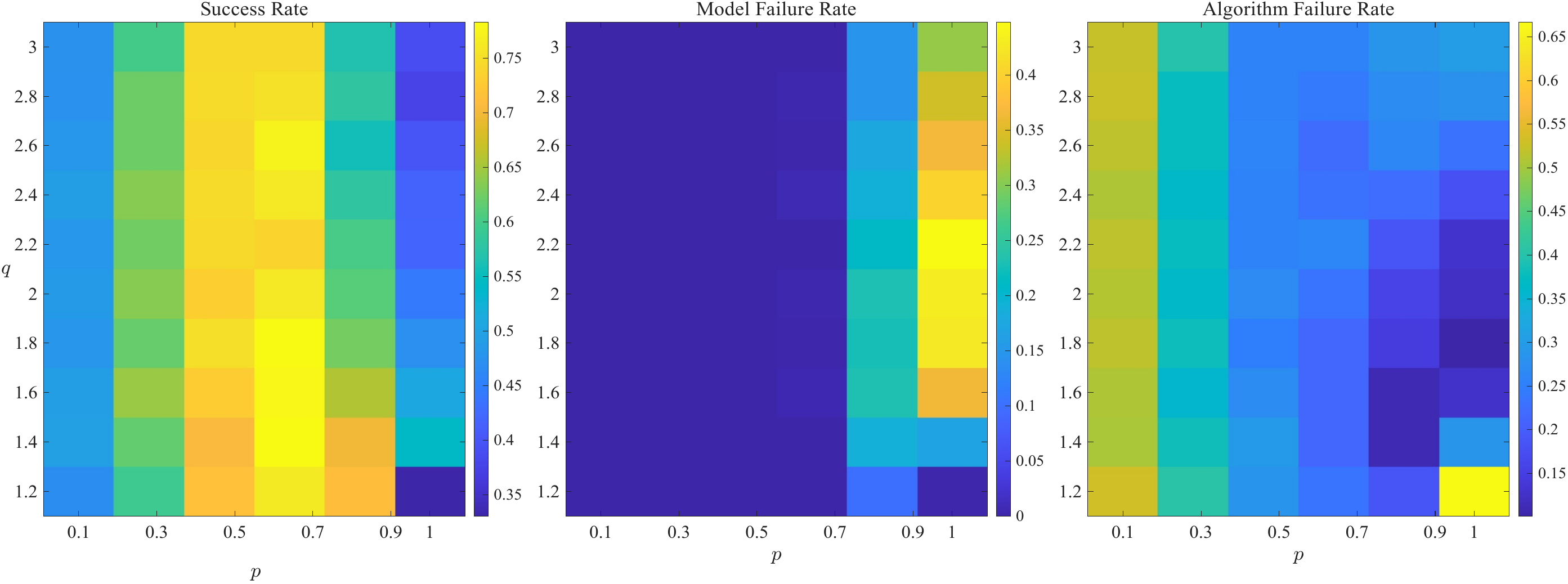}
		\caption{Gaussian matrix: $r=0.3$.}
		\label{fig:gauss-r03-heatmap}
	\end{subfigure}
	\begin{subfigure}[b]{0.96\textwidth}
		\centering
		\includegraphics[scale=0.2]{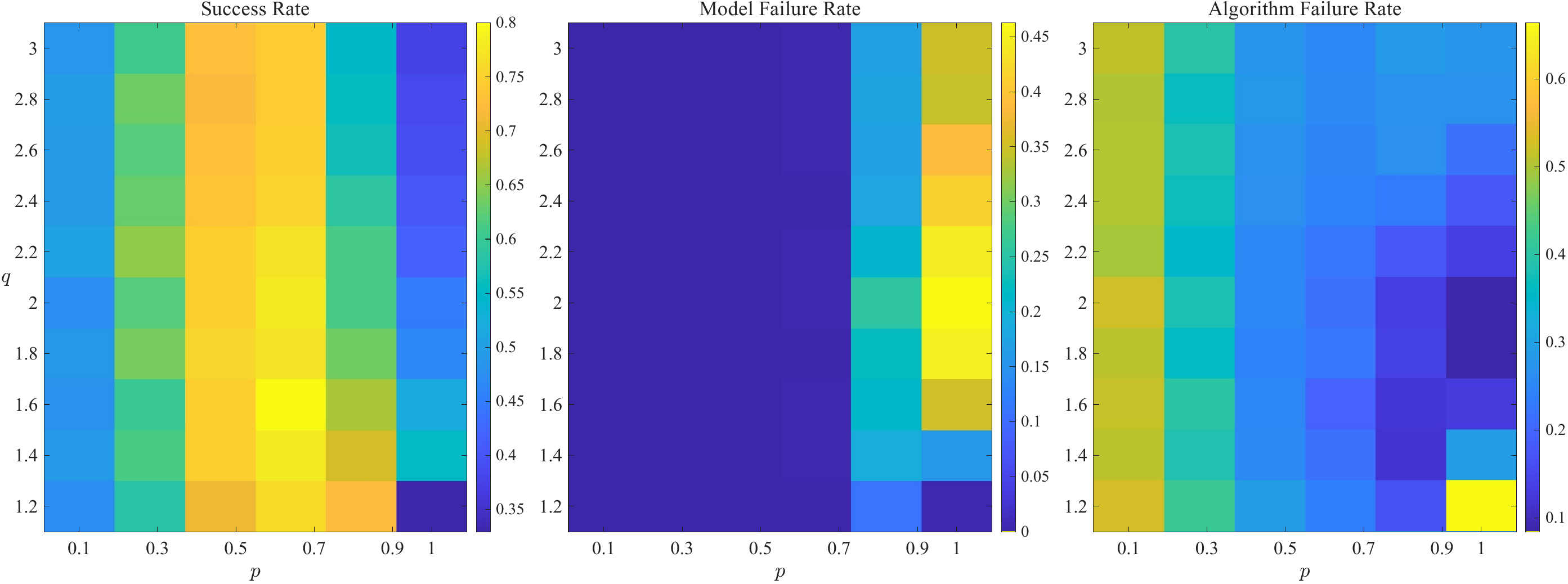}
		\caption{Gaussian matrix: $r=0.9$.}
		\label{fig:gauss-r09-heatmap}
	\end{subfigure}
	\caption{Parameter sensitivity of $\mathcal{Q}_{p,q}$ under correlated Gaussian sensing matrices. Each subfigure reports the average success rate, model-failure rate, and algorithm-failure rate over the tested sparsity levels.}
	\label{fig:gauss-parameter-heatmaps}
\end{figure}

Figure~\ref{fig:gauss-parameter-heatmaps} shows a clear and stable parameter pattern. Very small $p$ values, especially $p=0.1$, lead to relatively low success rates because the optimization landscape becomes difficult, as indicated by the high algorithm-failure rates. On the other hand, $p$ close to $1$ weakens the sparsity-promoting effect of the numerator and produces visible model failures, particularly at $p=1$. The most reliable region is the middle range $p=0.5$--$0.7$, where the success rate is the highest and both failure components are comparatively small. The choice of $q$ is less fragile than the choice of $p$, but values in the range $q\approx1.6$--$2.4$ usually provide the best trade-off. This behavior remains consistent when the Gaussian correlation increases from $r=0.3$ to $r=0.9$, suggesting that the favorable parameter region is not an artifact of a single sensing geometry.

\begin{figure}[htbp]
	\centering
	\includegraphics[scale=0.2]{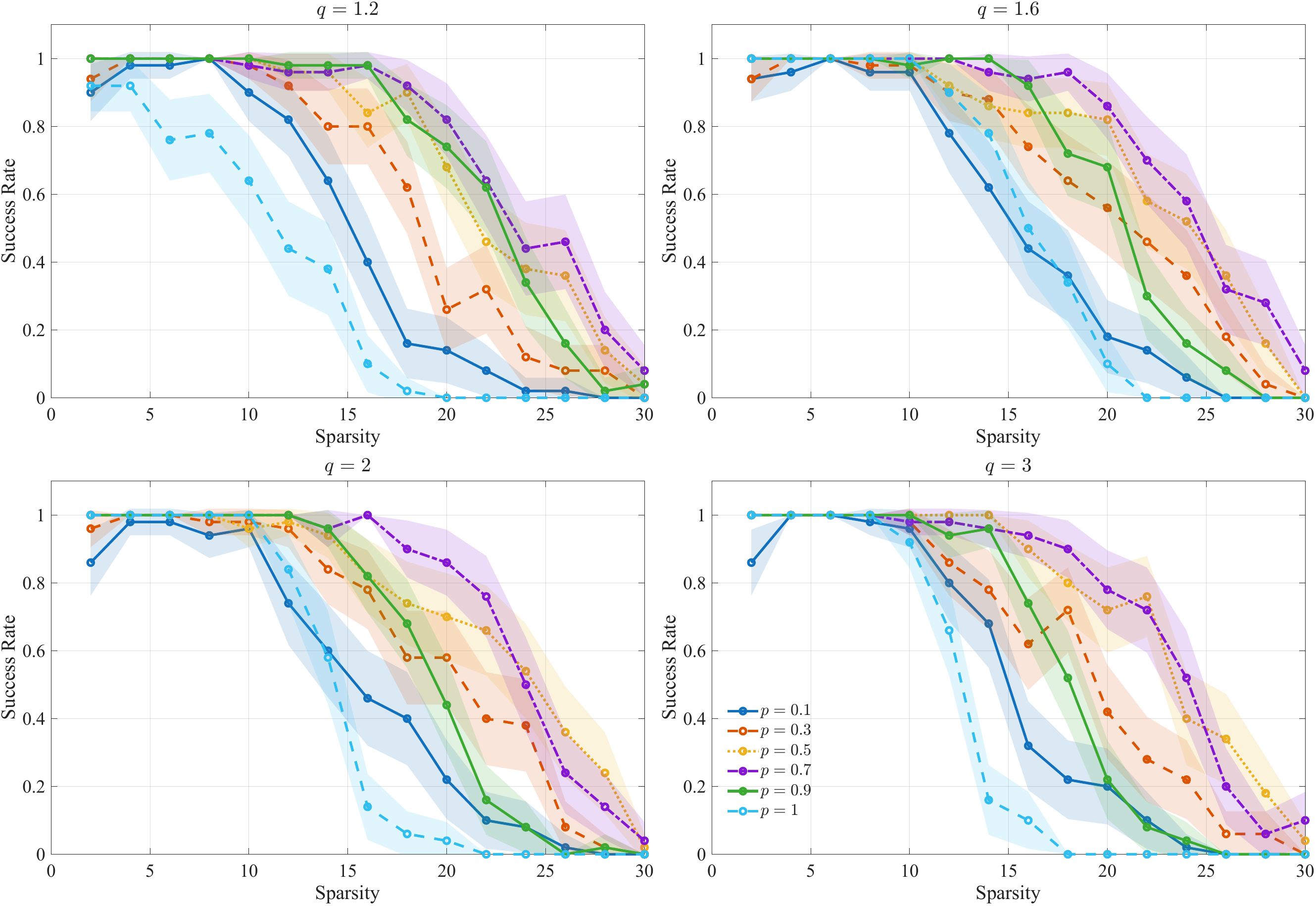}
	\caption{Success rate curves of $\mathcal{Q}_{p,q}$ under the Gaussian matrix with $r=0.3$. Each panel fixes $q$ and compares different $p$ values as the sparsity level increases.}
	\label{fig:gauss-r03-phase}
\end{figure}

\begin{figure}[htbp]
	\centering
	\includegraphics[scale=0.2]{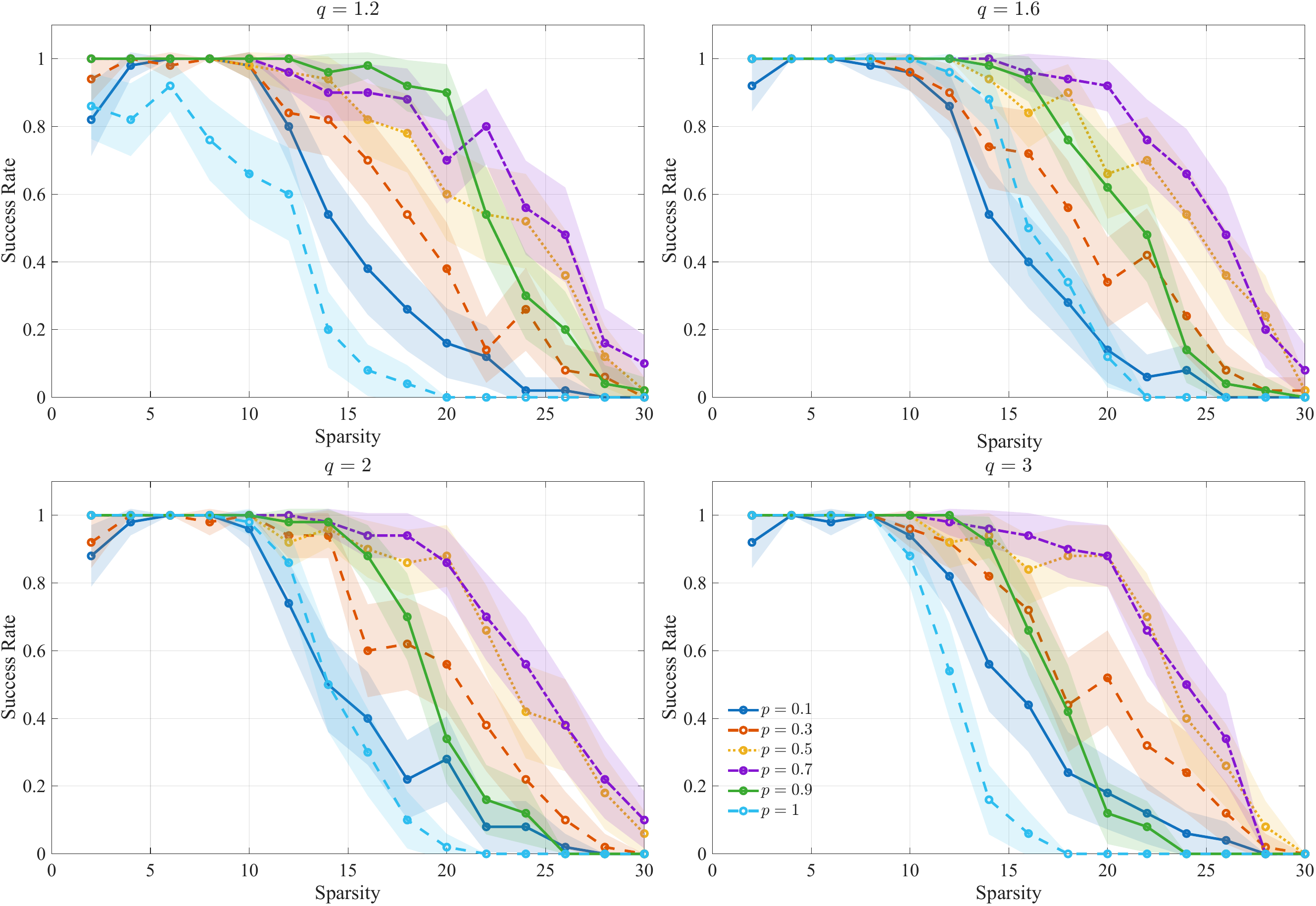}
	\caption{Success rate curves of $\mathcal{Q}_{p,q}$ under the Gaussian matrix with $r=0.9$.}
	\label{fig:gauss-r09-phase}
\end{figure}

Figures~\ref{fig:gauss-r03-phase} and~\ref{fig:gauss-r09-phase} provide a more detailed view of the success rate with respect to $k$. For small sparsity levels, almost all parameter choices recover the signal successfully. As $k$ increases, the curves separate. The choices $p=0.5$ and $p=0.7$ preserve high success rates over a wider sparsity range than both the more aggressive choice $p=0.1$ and the nearly convex choice $p=1$. Among them, $p=0.7$ is often the most robust when $q$ is moderate or large, while $p=0.5$ remains competitive for smaller $q$. The shaded confidence bands also show that the transition region becomes wider as the recovery problem becomes harder, but the ranking of the parameter choices is largely stable for both $r=0.3$ and $r=0.9$.

\begin{figure}[htbp]
	\centering
	\begin{subfigure}[b]{0.96\textwidth}
		\centering
		\includegraphics[scale=0.2]{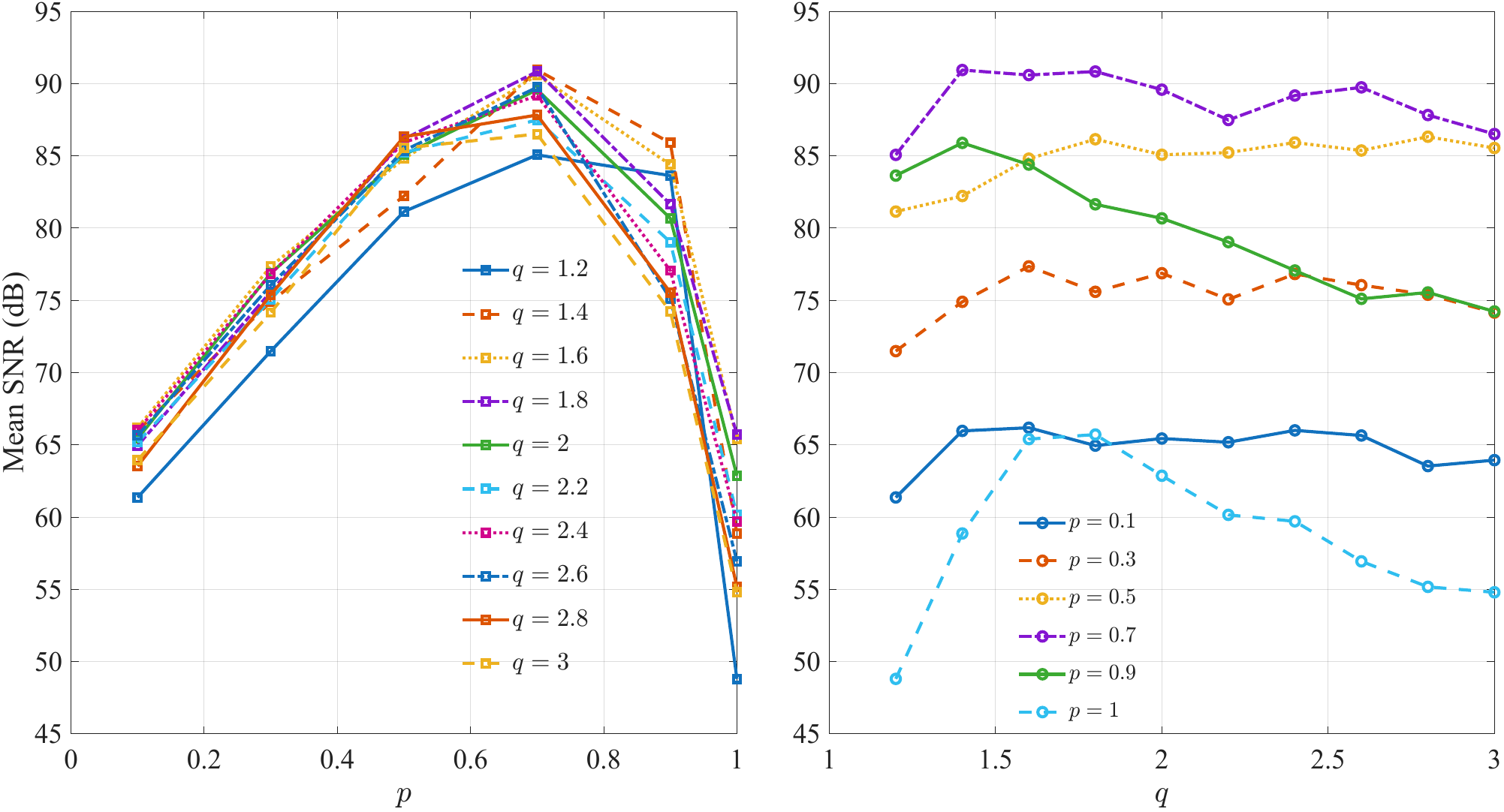}
		\caption{Gaussian matrix: $r=0.3$.}
		\label{fig:gauss-r03-snr}
	\end{subfigure}
	\begin{subfigure}[b]{0.96\textwidth}
		\centering
		\includegraphics[scale=0.2]{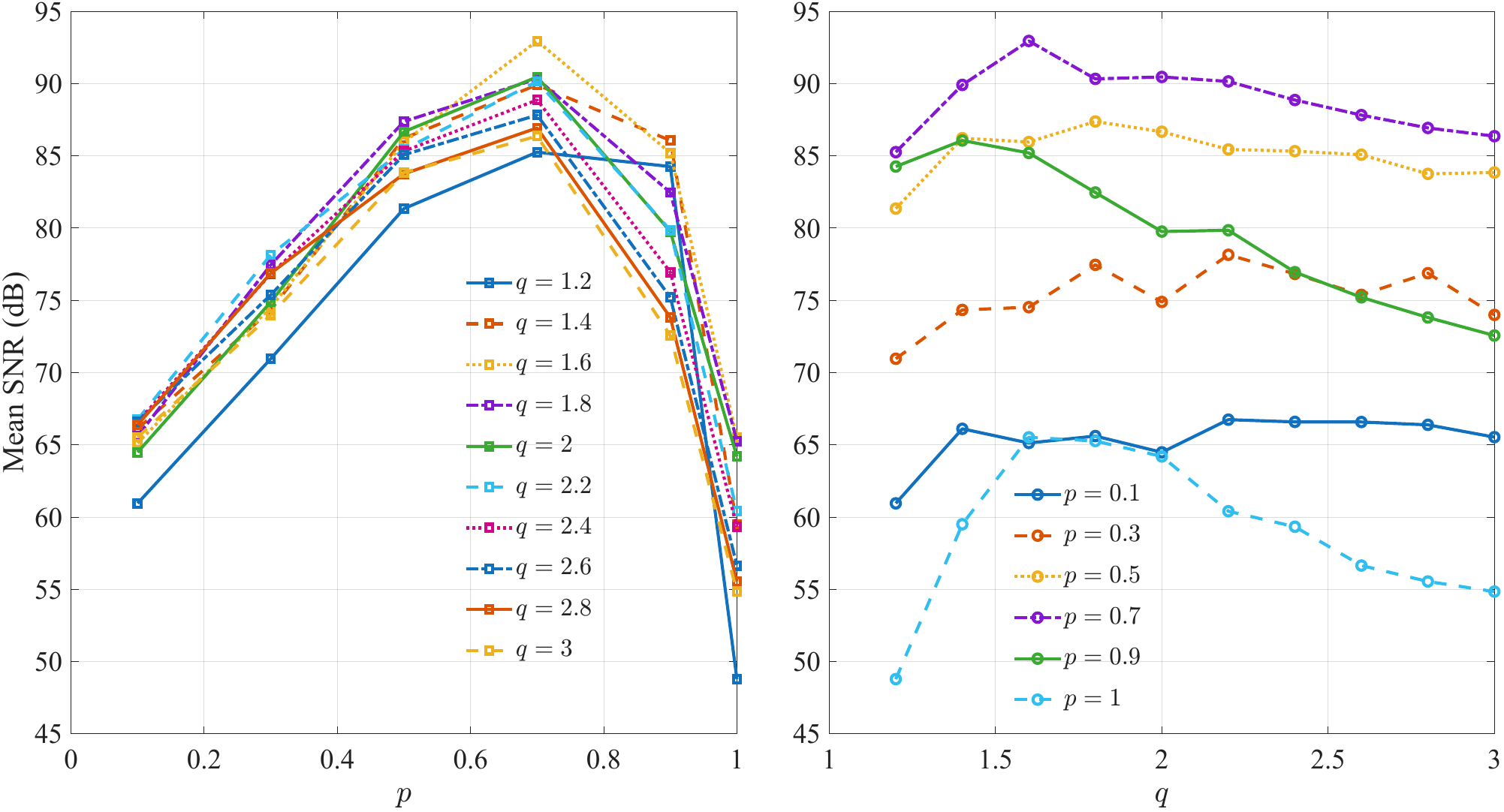}
		\caption{Gaussian matrix: $r=0.9$.}
		\label{fig:gauss-r09-snr}
	\end{subfigure}
	\caption{Mean reconstruction SNR of $\mathcal{Q}_{p,q}$ under correlated Gaussian sensing matrices.}
	\label{fig:gauss-snr}
\end{figure}

The SNR results in Figure~\ref{fig:gauss-snr} are consistent with the success rate analysis. Averaged over the tested sparsity levels, the best reconstruction fidelity is attained near $p=0.7$, with $p=0.5$ giving the second-best and very stable performance. In contrast, $p=1$ produces substantially lower SNR, while $p=0.1$ is also inferior despite its stronger sparsity bias. The dependence on $q$ is smoother: for $p=0.7$, the SNR stays high over a broad interval of $q$, whereas the performance for $p=0.9$ tends to decrease as $q$ grows. Overall, the empirical evidence supports the use of moderately nonconvex numerator exponents, especially $p=0.5$ or $p=0.7$, together with a denominator exponent around $q=1.5$--$2$.

\subsection{Comparison with Other Ratio-Type Models}

We next compare the proposed model with representative ratio-type sparse recovery models. The tested methods are $\ell_{0.5}/\ell_{1.5}$-DLPA, $\ell_{0.7}/\ell_{1.5}$-DLPA, $\ell_1/\ell_{1.5}$-CCP, $\ell_1/\ell_{\infty}$-ADMM, $\ell_1/\ell_{\infty}$-FISTA, $\ell_1/\ell_2$-A2, and $\ell_{0.5}/\ell_1$-FISTA. Here DLPA denotes the proposed Dinkelbach-type linearized proximal ADMM solver for $\mathcal{Q}_{p,q}$. For each method and each sparsity level, $100$ Monte Carlo trials are performed. We use the same success and failure classification as above.

\begin{figure}[htbp]
	\centering
	\begin{subfigure}[b]{0.96\textwidth}
		\centering
		\includegraphics[scale=0.3]{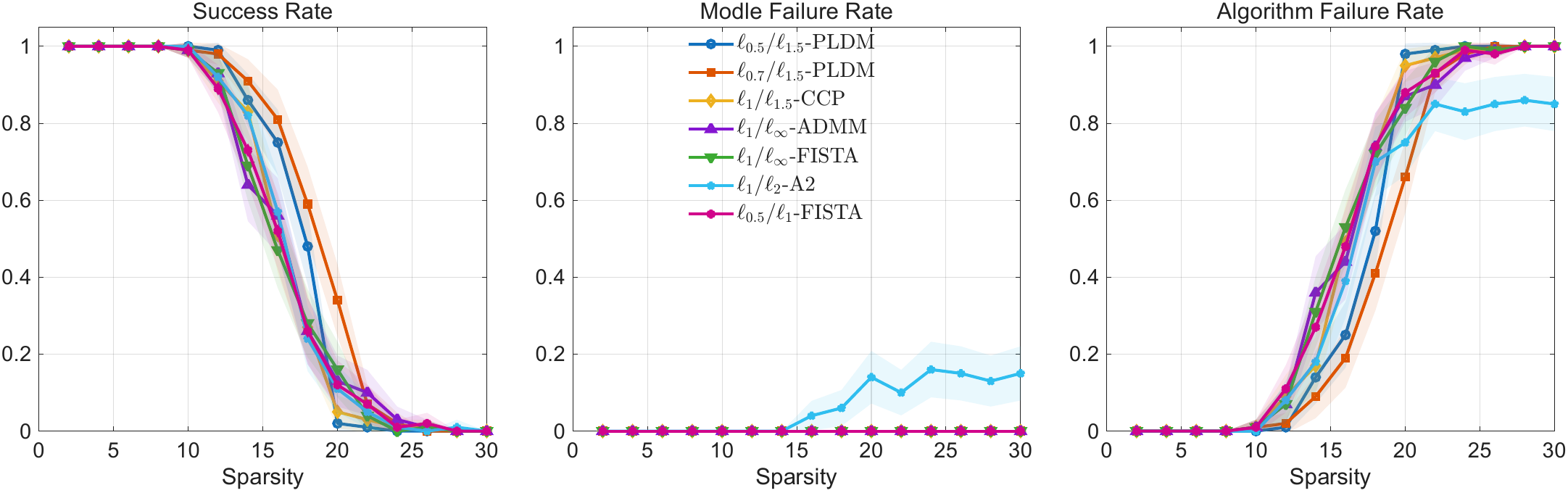}
		\caption{Oversampled DCT matrix: $F=10$.}
		\label{fig:comparison-dct-f10}
	\end{subfigure}
	\begin{subfigure}[b]{0.96\textwidth}
		\centering
		\includegraphics[scale=0.3]{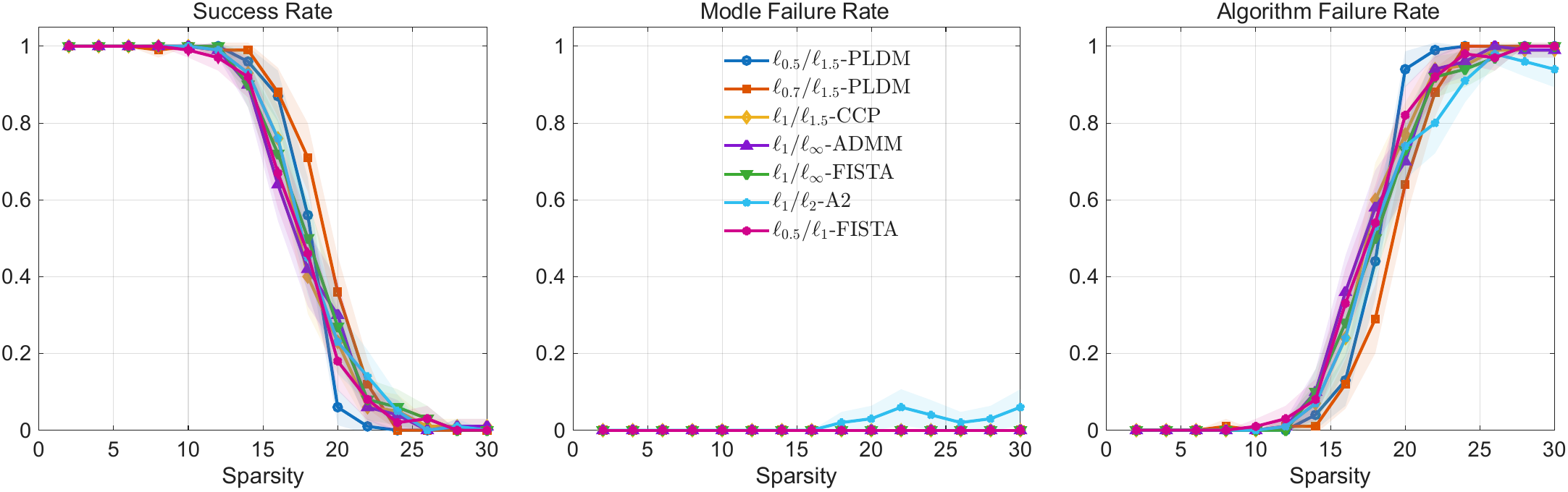}
		\caption{Oversampled DCT matrix: $F=15$.}
		\label{fig:comparison-dct-f15}
	\end{subfigure}
	\caption{Comparison of success rate, model-failure rate, and algorithm-failure rate on oversampled DCT matrices.}
	\label{fig:comparison-dct}
\end{figure}

Figure~\ref{fig:comparison-dct} reports the results for the oversampled DCT matrices, which are the most challenging tests because of strong column coherence. Both DLPA variants perform competitively, and $\ell_{0.7}/\ell_{1.5}$-DLPA gives the best overall experiments. For $F=10$, it retains a higher success rate than the baselines in the intermediate sparsity range, where most competing methods already deteriorate rapidly. This advantage becomes even more visible when the coherence is increased to $F=15$: $\ell_{0.7}/\ell_{1.5}$-DLPA maintains a high success probability for larger $k$, while most other methods enter the failure regime earlier. The middle panels show that model failures are nearly absent for most methods, except for a small contribution from $\ell_1/\ell_2$-A2 at large sparsity levels. Thus, the performance loss in the DCT experiments is mainly caused by algorithmic difficulty under high coherence rather than by a systematic mismatch between the model and the planted sparse signal.

\begin{figure}[htbp]
	\centering
	\begin{subfigure}[b]{0.96\textwidth}
		\centering
		\includegraphics[scale=0.3]{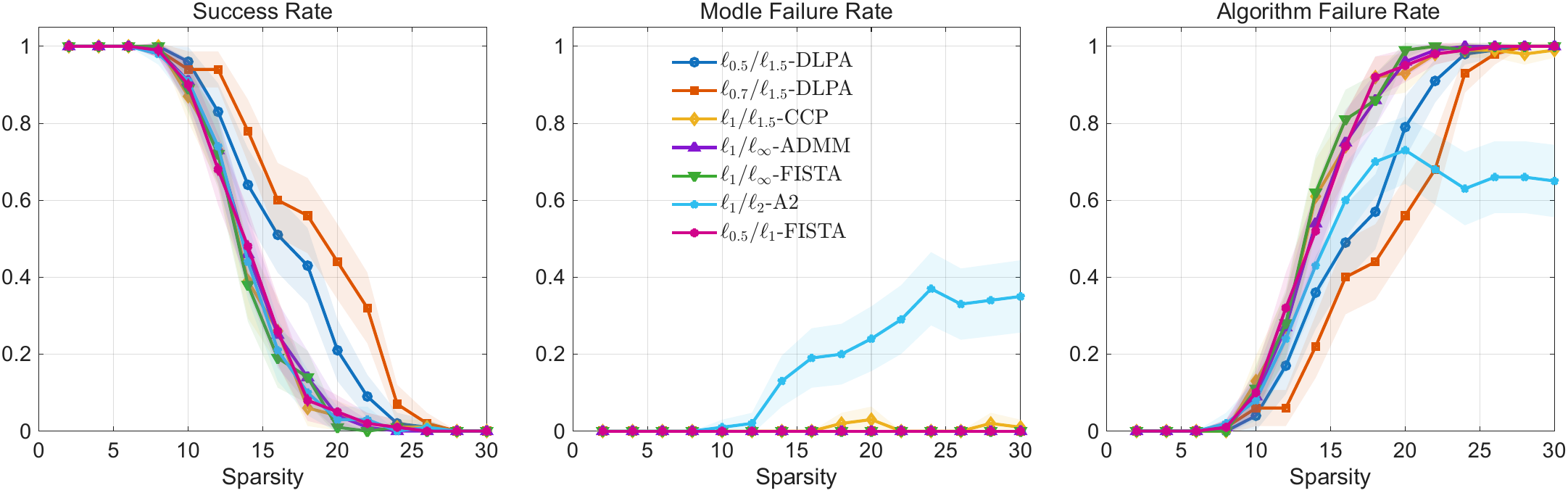}
		\caption{Correlated Gaussian matrix: $r=0.3$.}
		\label{fig:comparison-gauss-r03}
	\end{subfigure}
	\begin{subfigure}[b]{0.96\textwidth}
		\centering
		\includegraphics[scale=0.3]{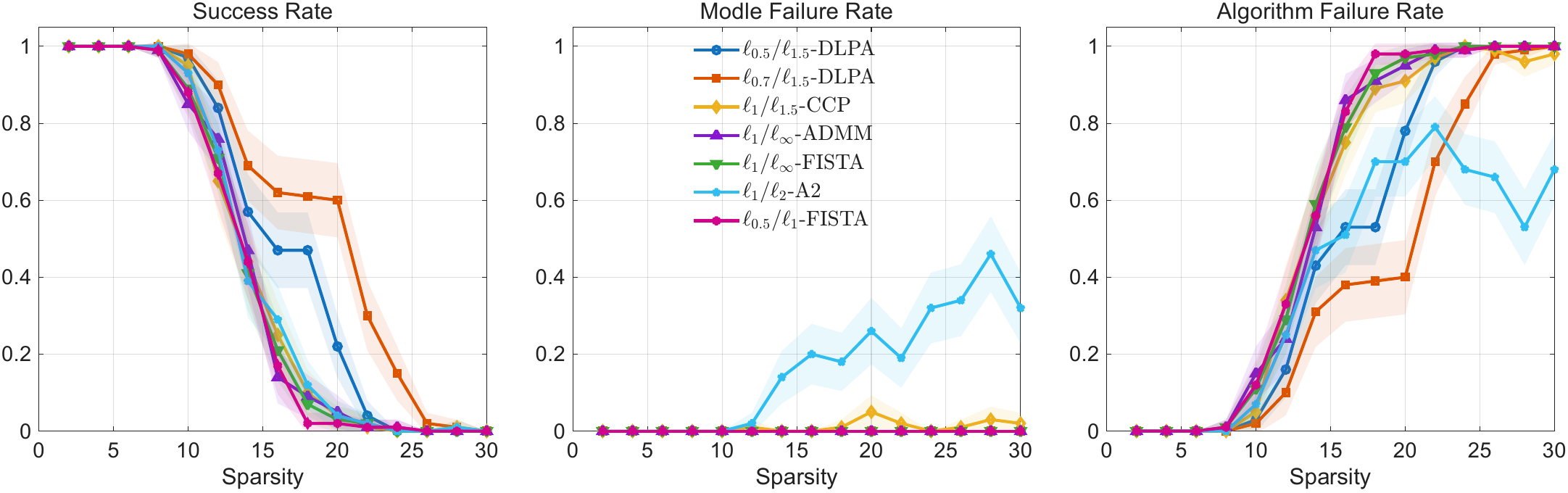}
		\caption{Correlated Gaussian matrix: $r=0.9$.}
		\label{fig:comparison-gauss-r09}
	\end{subfigure}
	\caption{Comparison of success rate, model-failure rate, and algorithm-failure rate on correlated Gaussian matrices.}
	\label{fig:comparison-gaussian}
\end{figure}

The Gaussian results in Figure~\ref{fig:comparison-gaussian} lead to the same conclusion. When $r=0.3$, $\ell_{0.7}/\ell_{1.5}$-DLPA has the slowest decay of success rate as $k$ grows, followed by $\ell_{0.5}/\ell_{1.5}$-DLPA. Several existing ratio-type methods perform well for very sparse signals but lose accuracy earlier in the transition region. When the correlation is increased to $r=0.9$, the advantage of the proposed model becomes more pronounced: $\ell_{0.7}/\ell_{1.5}$-DLPA keeps a noticeably higher success rate over moderate sparsity levels, whereas the baseline methods transition more sharply to failure. As in the DCT case, the dominant failure source is algorithmic failure. The only substantial model-failure component is again associated with the $\ell_1/\ell_2$-A2 baseline, indicating that the proposed $\ell_p^p/\ell_q^p$ formulation with $p<1$ is less prone to selecting a lower-objective but incorrect solution in these experiments.

Taken together, the experiments indicate that the proposed PLDM algorithm for solving the $\mathcal{Q}_{p,q}$-minimization problem is particularly effective in a moderately nonconvex regime. The parameter choices $(p,q)=(0.5,1.5)$ and $(0.7,1.5)$ are both robust, with $(0.7,1.5)$ delivering the strongest overall performance in the cross-model comparisons. The results also confirm the benefit of the generalized ratio structure under high dynamic range and coherent sensing matrices, where classical ratio models such as $\ell_1/\ell_2$, $\ell_1/\ell_\infty$, and $\ell_p/\ell_1$ tend to suffer earlier phase transitions or larger failure rates.

\section{Conclusion and future work}\label{sec:Conclusion}
This paper studied the generalized norm-ratio minimization model $\ell_p^p/\ell_q^p$ for compressed sensing. We established a local optimality criterion and a uniform exact recovery condition based on a null-space norm-ratio lower bound, and further derived high-probability sampling guarantees for the $\ell_1/\ell_q$ case under isotropic sub-Gaussian measurements. For stable recovery, we sharpened the RIP-based analysis of \eqref{eq:lp_lq_eps}: in the exact $k$-sparse case, the proposed bound gives a weaker RIP requirement and a smaller reconstruction error estimate, and further, in the compressible case it removes the explicit ambient-dimension dependence in earlier RIP--ROP estimates and also yields an RIP-only guarantee. These results provide theoretical support for $\ell_p^p/\ell_q^p$ minimization as both a sparsity-promoting and stable recovery model.

We also proposed a prox-linear Dinkelbach scheme for the constrained ratio problem and proved its convergence. Numerical experiments on Gaussian and oversampled DCT sensing matrices show that suitable choices of $(p,q)$ improve empirical recovery performance, especially for high-dynamic-range sparse signals and coherent measurements. Future work includes extending the sampling-complexity theory to the nonconvex range $0<p<1$, developing convergence-rate results for the Dinkelbach updates, and automatic selection of $(p,q)$.
%
\bibliographystyle{elsarticle-num}   
\bibliography{manuscriptbib}   
 \end{document}